\newtheorem{prop}{Proposition}[section]
\newtheorem{teo}{Theorem}[section]
\newtheorem{lema}{Lemma}[section]
\def\R{{\mathbb{R}}}
\def\T{{\mathcal T}}
\def\a{\mathfrak{q}}
\def\L{{\mathcal L}}
\begin{document}

\title[A nonlocal Stefan problem]{\bf A free boundary problem of Stefan type with nonlocal diffusion}

\author[Cort\'{a}zar,  Quir\'{o}s \and Wolanski]{C. Cort\'{a}zar,  F. Quir\'{o}s \and N. Wolanski}

\address{Carmen Cort\'{a}zar\hfill\break\indent
	Departamento  de Matem\'{a}tica, Pontificia Universidad Cat\'{o}lica
	de Chile \hfill\break\indent Santiago, Chile.} \email{{\tt
		ccortaza@mat.puc.cl} }

\address{Fernando Quir\'{o}s\hfill\break\indent
	Departamento  de Matem\'{a}ticas, Universidad Aut\'{o}noma de Madrid
	\hfill\break\indent 28049-Madrid, Spain.} \email{{\tt
		fernando.quiros@uam.es} }

\address{Noem\'{\i} Wolanski \hfill\break\indent
	Departamento  de Matem\'{a}tica, FCEyN,  Universidad de Buenos Aires,
	\hfill\break \indent and
	IMAS, CONICET, \hfill\break\indent
	(1428) Buenos Aires, Argentina.} \email{{\tt wolanski@dm.uba.ar} }

\thanks{C.\,Cort\'azar supported by  FONDECYT grant 1150028 (Chile). F.\,Quir\'os supported by
	projects MTM2014-53037-P  and MTM2017-87596-P (Spain). N.\,Wolanski supported by
	CONICET PIP625, Res. 960/12, ANPCyT PICT-2012-0153, UBACYT X117 and MathAmSud 13MATH03 (Argentina).}

\keywords{Nonlocal diffusion, Stefan problem, free boundary problems.}

\subjclass[2010]{35R09, 35R35, 35R37, 92D25.}

\date{}

\begin{abstract}
We introduce and analyze a nonlocal version of the one-phase Stefan problem in which, as in the classical model, the rate of growth of the volume of the liquid phase is proportional to the rate at which energy is lost through the interphase. We prove existence and uniqueness for the problem posed on the line, and on the half-line with constant Dirichlet data, and in the radial case in several dimensions. We also describe the asymptotic behaviour of both the solution and its free boundary. The model may be of interest to describe the spreading of populations in hostile environments.
\end{abstract}

\maketitle

\section{Introduction}
\setcounter{equation}{0}

The aim of this paper is to introduce and analyze a nonlocal version of the one-phase Stefan problem which may be of interest to describe the spreading of a population surrounded by a hostile environment. As in the classical local formulation, the rate of growth of the volume of the \lq\lq liquid'' phase is proportional to the rate at which energy is lost at the interphase.

The well-known usual local Stefan problem is a mathematical model
that describes the phenomenon of phase transition, for example
between water and ice, \cite{Me}, \cite{Ru}.
Its history goes back to Lam\'{e} and
Clapeyron \cite{LC} and, afterwards, Stefan \cite{St}. The one-phase Stefan
problem corresponds to the simplified case in which the temperature of the ice phase is supposed to be maintained at the value where the phase transition occurs, say
$0^{\circ}{\rm C}$.

Let $\Omega_t$ denote the region occupied by the liquid phase at time $t$. The temperature~$u$ is nonnegative in $\Omega=\{(x,t): x\in\Omega_t,\;t>0\}$, and satisfies the heat equation there. However, the domain occupied by water is not known a priori, and has to be determined at the same time as the temperature. In the classical formulation of the problem, $\Omega$ is assumed to be smooth. As initial data we have the initial location of the liquid phase, $\Omega_0$, and the initial  distribution of  temperature, $u_0$, within it.
Conservation of energy implies that, in the absence of heat sources or sinks, the temperature $u$ satisfies the evolution
equation
$$
\rho c \partial_t u= \nabla \cdot(\kappa \nabla u) \quad\text{in } \Omega,
$$
where the density $\rho>0$,  the 	specific heat $c>0$ (the amount of energy needed to increase in one unit
the temperature of a mass unit of water), and the thermal conductivity
$\kappa>0$ are assumed to be constant. The temperature is expected to be continuous across the \emph{free boundary} $\Gamma=\{(x,t):x\in\partial\Omega_t,\;t>0\}$. Hence, $u=0$ there. However, the domain $\Omega$ is not known a priori, and an extra condition is needed to close the system. This condition, known as \emph{Stefan's condition}, comes also from the conservation of energy, and states that the normal velocity of $\partial\Omega_t$  at any point $x\in\partial_t\Omega$ satisfies
$$
Lv_{\bf n}(x,t)= -\partial_{\bf n} u(x,t),
$$
where the  latent heat $L>0$ (the amount of energy needed to transform a mass unit of ice into water) is also assumed to be constant. All the parameters above, $\rho$, $c$, $\kappa$, and $L$,    can be set to one with a
change of units, and we will assume that this has been done in the discussion that follows.

The one-phase Stefan problem can be used in other contexts, for example in population dynamics. Let us think of a population spreading in a hostile environment. In this setting $u(x,t)$ represents the population density at the point $x$ at time $t$, and $\Omega_t$ denotes the habitat of the population at time $t$. If the population tends to avoid crowds, $u$ will satisfy the heat equation within $\Omega$. However, in the process of colonization of new regions in the hostile environment some individuals will die. It seems sensible to assume that the cost in lifes will be proportional to the volume of the colonized regions. This balance cost/volume should hold at a local level, which leads to Stefan's condition. For the use of Stefan's problem in this context see for example~\cite{Bunting-Du-Krakowski-2012}. We will introduce our nonlocal model having in mind this kind of population spreading problems.


Let $J: \R^N\to \R$ be a nonnegative, radial, continuous function with $\int_{\R^N} J = 1$. Assume also that $J$ is strictly positive in $B(0, d)$ and vanishes in the complement. Let $u(x, t)$  be the density at the point
$x$ at time $t$ of a certain population, and let  $J(x-y)$ be the probability distribution for individuals of jumping
from location $y$ to location $x$. Then, within the viable habitat, the rate at which individuals are arriving at position $x$ from all other places is given by
$$
\mathcal{A}_J u(x,t) := \int_{\mathbb{R}^N}J (x-y)u(y, t)\, dy=(J\ast u(\cdot,t))(x).
$$
Notice that this is nothing but the average of $u(\cdot,t)$ in the ball $B(x,d)$ with weight $J(x-\cdot)$. On the other hand, the rate at which individuals are leaving location $x$ to travel to all other sites is given by $ \int_{\R^N} J (y-x)u(x, t)\, dy=-u(x, t)$. In the absence of external or internal
sources, this leads immediately to
\begin{equation}
\label{eq:nonlocal.heat.equation}
\partial_t u=\mathcal{L}u:=\mathcal{A}_J u -u\quad \text{in }\Omega:=\{(x,t)\in\mathbb{R}^N\times\mathbb{R}_+: x\in\Omega_t,\; t>0\},
\end{equation}
where $\Omega_t\subset\mathbb{R}^N$ is the region apt to be inhabited by the species at time $t$.
Thus, the evolution of the population density at a certain point $x\in\Omega_t$ is given by the balance between its value and its $J$-weighted average  in the ball $B(x,d)$. In the local model the region where the
average is taken shrinks to a point, and  the evolution of the population density is governed by its Laplacian. In the hostile region, $\Omega^c$, which is not apt for the survival of the species, $u=0$.

As data we have the initial habitable region, $\Omega_0$, and the initial distribution of the population, $u_0\in L^1_+(\mathbb{R}^N)\cap C(\mathbb{R}^N)$,  $u_0=0$ in $\Omega_0^c$, where $L^1_+(\mathbb{R}^N):=\{f\in L^1(\mathbb{R}^N):f\ge0\}$.

Individuals that live close to the boundary of $\Omega$ may try to jump, with a probability given by $J$, to the hostile region $\Omega^c$. If this is the case, they will die when crossing the boundary. But their death will not be in vain, since it will prepare the terrain for the arrival of other individuals. Think for example of a region which cannot be inhabited because it has a pH which is inadequate for the species. Individuals that jump there die, but their corpses may change the pH of the sorroundings. In this way, a sound assumption is that the velocity at which the boundary advances in the (outer) normal direction at one of its points is given by the number of  individuals which cross  the boundary through this point in this direction per time and surface units. Thus, for example, in the relatively simple case  in which the problem is posed in one spatial dimension, and the habitable region at time $t$ has the form $\Omega_t=(-\infty,s(t))$ for some $C^1$ function~$s$, which should be nondecreasing, the nonlocal Stefan condition we are looking for reads
\begin{equation}
\label{eq:Stefan.condition.one.boundary}
\dot s(t)=\int_{s(t)}^\infty \mathcal{A}_Ju(\cdot,t).
\end{equation}
The right hand side of this formula thus represents a kind of nonlocal flux at the boundary. We deal with this case in Section~\ref{sect:1D-1FB}, where we prove that the problem is well posed and obtain regularity properties for both the solution and the function $s$ giving the free boundary. We also characterize the large time behaviour of solutions in terms of the initial data, if the latter has a finite first moment.

Again in the one-dimensional setting, if the viable habitat at time $t$ is an interval, $\Omega_t=(s^-(t),s^+(t))$, for a nonincreasing function $s^-$ and a nondecreasing function $s^+$, both of them $C^1$, the nonlocal Stefan conditions are 
$$
\dot s^-(t)=-\int_{-\infty}^{s^-(t)}\mathcal{A}_J u(\cdot,t),\quad  
\dot s^+(t)=\int_{s^+(t)}^{\infty}\mathcal{A}_J u(\cdot,t).
$$
This situation is analyzed in Section~\ref{sect:1D-CS}.

In Section~\ref{sect-semirrecta} we deal with the problem on the half-line with constant Dirichlet data and only one boundary, so that $\Omega_t=(0,s(t))$. Stefan's condition is also given by~\eqref{eq:Stefan.condition.one.boundary} in this case.

The higher dimensional case is more involved. This already happens in the classical local case. Indeed,  for this latter problem it is well-known that even when the initial domain is very smooth, difficulties arise when two points of the boundary of the liquid phase  meet, since the normal direction is not well defined in this situation. Hence, one may only expect local (in time) existence for classical solutions, and weaker notions of solution have to be defined if one looks for global solutions.  This will be done for the present nonlocal model somewhere else. Here we will restrict ourselves to the radial case, for which we can define a global solution; see Section~\ref{sect:radial.solutions}.

Equations like the one appearing in~\eqref{eq:nonlocal.heat.equation} have already been widely used to  model the
dispersal of a species by taking into account long-range effects; see, for example, \cite{BZ, CF, F}. These models usually contain logistic growth terms, to account for births and deaths and for the fact that resources are limited.  However, solutions of such models become immediately positive, and do not have a free boundary.  Our model could also include a growth term. This idea has recently been considered in the work in preparation~\cite{Cao-Du-Li-Li}, of which we have become aware after the completion of the present paper. That work is restricted to the one-dimensional case posed in the whole real line with two free boundaries, which would correspond to the problem that we consider in Section~3. Its main goal is to analyze the possible effects of the growth term in the large time behaviour of solutions. In our case there is no reaction terms, and we are able to give a more precise description of the asympotics.

The possible applications of our model are not restricted to population dynamics. It could be also be meaningful in other contexts, for example to describe phase changes, in order to account for midrange interactions.

In the local case there is another approach to the problem arising from first principles, the so-called enthalpy-temperature formulation. A nonlocal version of  such an approach has been recently analyzed in~\cite{BChQ}. Let us remark that the properties of the solutions of such a model are very different (and less close to the local case) from the properties of the model that we propose here. The problem may exhibit, for example,  mushy regions or nucleation.

We would also like to mention the paper~\cite{cer}, where the authors deal with a nonlinear problem connected to the nonlocal operator $\mathcal{L}$ whose solutions have a free boundary.

\section{The problem on the line with only one free boundary}
\label{sect:1D-1FB} \setcounter{equation}{0}

We start with the simplest case: the problem is posed in one spatial dimension, and the habitable region at time $t$ is assumed to have the form $\Omega_t=(-\infty,s(t))$ for some $C^1$ function~$s$, which should be nondecreasing.

\noindent\emph{Notation. } Along this section $M(t)=\int_{\mathbb{R}}u(\cdot,t)$,  $\Omega_t=\{x\in \mathbb{R}: x<s(t)\}$ for all $t\ge0$ and $\Omega=\{(x,t)\in \mathbb{R}\times\mathbb{R}_+: x\in\Omega_t, t>0\}$.

\medskip

\noindent\textsc{Problem (1D-1FB)}: Given $s_0\in\mathbb{R}$  and $u_0\in L^1_+(\R)\cap C(\mathbb{R})$ such that $u_0(x)=0$ for $x>s_0$, find  a nonnegative function $u\in C(\mathbb{R}\times\overline{\mathbb{R}_+})$ and a nondecreasing function  $s\in C^1(\overline{\mathbb{R}_+})$  satisfying
\begin{equation*}
\label{problem1}
\left\{
\begin{array}{l}
\partial_t u=\mathcal{L}u\ \text{in }\Omega,\quad
u=0\ \text{in }(\mathbb{R}\times\mathbb{R}_+)\setminus\Omega,\quad u(\cdot,0)=u_0\ \text{in }\mathbb{R},\\[8pt]
\displaystyle\dot s(t)=\int_{s(t)}^\infty\mathcal{A}_J u(\cdot,t)\ \text{for }t>0, \quad s(0)=s_0.
\end{array}
\right.
\end{equation*}

\medskip

Observe that in the above description of the problem  we do not require $u(\cdot,t)$ to be positive in $\Omega_t$. In principle there may be regions that are apt to be inhabited in which there is no population. However, we will see later that, if the initial datum is nontrivial, the population will occupy the whole available space for all positive times.  

\subsection{Existence and uniqueness}
Let $(u,s)$ be a solution to Problem~(1D-1FB). Then
\begin{equation}
\label{eq:1D-1FB.integral.version}
\begin{cases}
\displaystyle s(t)=s_0+\int_0^t\int_{s(r)}^\infty\mathcal{A}_J u(x,r)\,\textrm{d}x\textrm{d}r,&t>0,\\[8pt]
\displaystyle u(x,t)=\textrm{e}^{-t}u_0(x)+ \int_{\tau(x)}^t \textrm{e}^{-(t-r)}\mathcal{A}_J u(x,r)\,\textrm{d}r,&\displaystyle x<s_\infty,\; t>\tau(x),\\[8pt]
\displaystyle u(x,t)=0,&\displaystyle x\ge s(t),\;t>0,
\end{cases}
\end{equation}
where $s_\infty=\lim_{t\to\infty}s(t)$ and
$$
\tau(x)=0 \ \text{for }x\le s_0,\quad \tau(x)=
\sup\{t\ge0:s(t)=x\}\ \text{for }x\in (s_0,s_\infty).
$$
On the other hand, if $(u,s)\in C(\overline{\mathbb{R}_+};L^1(\R))\times C(\overline{\mathbb{R}_+})$ satisfies~\eqref{eq:1D-1FB.integral.version}, and $\dot s>0$ in $\mathbb{R}_+$, then $\tau\in C((-\infty,s_\infty))$, and hence $(u,s)$ is a solution to~Problem~(1D-1FB). With this idea in mind, we start by finding a solution to~\eqref{eq:1D-1FB.integral.version}.

\begin{lema}
	\label{lem:existence-line.integral.equation}
	Given $s_0\in\mathbb{R}$  and $u_0\in L^1_+(\R)\cap C(\mathbb{R})$ such that $u_0(x)=0$ for $x>s_0$, there is a unique pair $(u,s)\in C(\overline{\mathbb{R}_+};L^1(\R))\times C(\overline{\mathbb{R}_+})$ solving~\eqref{eq:1D-1FB.integral.version}.
\end{lema}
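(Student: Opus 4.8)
The plan is to recast the system~\eqref{eq:1D-1FB.integral.version} as a fixed-point problem, solve it first on a short time interval by the contraction mapping principle, and then extend to all times by means of an a priori bound on the mass. Fix $T>0$ and a constant $R>\|u_0\|_{L^1(\R)}$, and work in the complete metric space
\[
X_T=\big\{(v,\sigma)\in C([0,T];L^1(\R))\times C([0,T]):\ v\ge0,\ \sup_{t\in[0,T]}\|v(\cdot,t)\|_{L^1(\R)}\le R,\ \sigma\ \text{nondecreasing},\ \sigma(0)=s_0\big\},
\]
equipped with the distance $d\big((v_1,\sigma_1),(v_2,\sigma_2)\big)=\sup_{[0,T]}\|v_1(\cdot,t)-v_2(\cdot,t)\|_{L^1(\R)}+\sup_{[0,T]}|\sigma_1(t)-\sigma_2(t)|$.

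Given $(v,\sigma)\in X_T$ I would define $\Phi(v,\sigma)=(\tilde u,\tilde s)$ by
\[
\tilde s(t)=s_0+\int_0^t\int_{\sigma(r)}^\infty \mathcal{A}_J v(x,r)\,\mathrm dx\,\mathrm dr,\qquad
\tilde u(x,t)=\mathrm e^{-t}u_0(x)+\int_{\tau(x)\wedge t}^t \mathrm e^{-(t-r)}\mathcal{A}_J v(x,r)\,\mathrm dr,
\]
where $\tau$ is the generalized inverse of $\sigma$; with the convention $\tau(x)\wedge t$ the second formula vanishes for $x\ge\sigma(t)$ (for $t>0$), recovering the third line of~\eqref{eq:1D-1FB.integral.version}, so that a fixed point of $\Phi$ is exactly a solution. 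That $\Phi$ maps $X_T$ into itself follows from $\int_\R J=1$, which gives $\|\mathcal{A}_J v(\cdot,r)\|_{L^1}\le\|v(\cdot,r)\|_{L^1}$, together with the nonnegativity of $J$, $v$ and $u_0$ (yielding $\tilde u\ge0$ and $\tilde s$ nondecreasing) and the estimate $\|\tilde u(\cdot,t)\|_{L^1}\le\|u_0\|_{L^1}+\int_0^t\|v(\cdot,r)\|_{L^1}\,\mathrm dr\le\|u_0\|_{L^1}+RT$, which stays below $R$ once $T$ is small; continuity in $t$ with values in $L^1(\R)$ is a routine consequence of the boundedness of $\mathcal{A}_J$ on $L^1(\R)$.

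For the contraction estimate, every term arising from the difference $v_1-v_2$ carries a factor $T$: for instance $\|\tilde u_1(\cdot,t)-\tilde u_2(\cdot,t)\|_{L^1}$ contributes $\int_0^t\|\mathcal{A}_J(v_1-v_2)(\cdot,r)\|_{L^1}\,\mathrm dr\le T\,d\big((v_1,\sigma_1),(v_2,\sigma_2)\big)$, and similarly for $\tilde s$. The delicate point, which I expect to be the main obstacle, is the dependence on the free boundary through $\tau$: one must control the terms in which only the limits of integration differ, namely $\int_{\sigma_1(r)}^{\sigma_2(r)}\mathcal{A}_J v_2$ and $\int_{\tau_1(x)\wedge t}^{\tau_2(x)\wedge t}\mathrm e^{-(t-r)}\mathcal{A}_J v_2(x,r)\,\mathrm dr$. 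Since $\mathcal{A}_J v_2=J\ast v_2$ and $J$ is continuous with compact support, $\|\mathcal{A}_J v_2(\cdot,r)\|_{L^\infty}\le\|J\|_{L^\infty}\|v_2(\cdot,r)\|_{L^1}\le\|J\|_{L^\infty}R$, so the first term is bounded by $\|J\|_{L^\infty}R\,|\sigma_1(r)-\sigma_2(r)|$, while the second is bounded pointwise by $\|J\|_{L^\infty}R\,|\tau_1(x)-\tau_2(x)|$. To handle the latter after integrating in $x$ I would invoke the geometric identity $\int_\R|\tau_1(x)-\tau_2(x)|\,\mathrm dx=\int_0^T|\sigma_1(t)-\sigma_2(t)|\,\mathrm dt$, valid for nondecreasing $\sigma_1,\sigma_2$ because both integrals measure the area between the two graphs; this turns the $\tau$-term into $\|J\|_{L^\infty}R\,T\,\sup_{[0,T]}|\sigma_1-\sigma_2|$ and, crucially, sidesteps the need for a lower bound on $\dot\sigma$, which is not available (especially near $t=0$). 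Collecting terms gives $d\big(\Phi(v_1,\sigma_1),\Phi(v_2,\sigma_2)\big)\le C(R,\|J\|_{L^\infty})\,T\,d\big((v_1,\sigma_1),(v_2,\sigma_2)\big)$, a contraction for $T$ small.

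The Banach fixed point theorem then yields a unique solution of~\eqref{eq:1D-1FB.integral.version} on $[0,T]$. To pass to $\overline{\mathbb{R}_+}$ I would use that the mass does not increase: a direct computation with the Stefan condition shows $\frac{\mathrm d}{\mathrm dt}\int_\R u=-\dot s\le0$, since $\int_\R\mathcal{A}_J u=\int_\R u$ and $\int_{s(t)}^\infty\mathcal{A}_J u=\dot s$, whence $\|u(\cdot,t)\|_{L^1}\le\|u_0\|_{L^1}$ for as long as the solution exists. Consequently the length $T$ of the existence interval, which depends only on $R$ and $\|J\|_{L^\infty}$, can be kept fixed when restarting from $(u(\cdot,T),s(T))$, and iterating this step covers all of $\overline{\mathbb{R}_+}$, producing a solution in $C(\overline{\mathbb{R}_+};L^1(\R))\times C(\overline{\mathbb{R}_+})$. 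Uniqueness holds on each interval by the contraction and propagates to the whole half-line, as two global solutions must coincide successively on $[kT,(k+1)T]$ for every $k$.
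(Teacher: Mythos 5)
Your proposal is correct and follows essentially the same route as the paper: a Banach fixed-point argument in $C([0,T];L^1(\R))\times C([0,T])$, with the delicate free-boundary terms controlled by exactly the same area-between-graphs estimate $\int|\tau_1-\tau_2|\,\mathrm{d}x\le\int_0^T|\sigma_1-\sigma_2|\,\mathrm{d}t$ that the paper uses for its term $I_2$, followed by global extension via the non-increase of the mass. The only differences are cosmetic (you take $\tau$ as the inverse of the input boundary rather than of the output one, and you keep a slack mass bound $R$ instead of propagating $\|u_0\|_{L^1}$ exactly through the invariant set), and they do not affect the argument.
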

\begin{proof} We first prove local existence and uniqueness.
	
	Given $T>0$, the linear space $\mathcal{B}_T=C([0,T];L^1(\R))\times C([0,T])$ endowed with the norm
 	$\|(u,s)\|=\|u\|_{L^\infty(0,T;L^1(\R))}+\|s\|_{L^\infty(0,T)}$ is a Banach space. The set
$$
\begin{aligned}
 	K_T=\{(u,s)&\in \mathcal{B}_T: u\ge0,   u(x,t)=0\mbox{ if }x>s(t),  \|u(\cdot,t)\|_{L^1(\R)}\le \|u_0\|_{L^1(\R)}
 \  \forall t>0,\\
 &u(\cdot,0)=u_0, s\mbox{ nondecreasing}, s(0)=s_0\}
 \end{aligned}
$$
is a closed subspace of $\mathcal{B}_T$. Given $(u,s)\in K_T$, we define $(v,\xi)=\mathcal{T}(u,s)$ by
\begin{equation}\label{T}
   \begin{cases}
    \displaystyle\xi(t)=s_0+\int_0^t\int_{s(r)}^\infty\mathcal{A}_J u(x,r)\,\textrm{d}x\textrm{d}r,&0<t\le T,\\
   \displaystyle v(x,t)=\textrm{e}^{-t}u_0(x)+ \int_{\tau_\xi(x)}^t \textrm{e}^{-(t-r)}\mathcal{A}_J u(x,r)\,\textrm{d}r,&\displaystyle x<\xi(T),\ \tau_\xi(x)\le t\le T,\\[8pt]
   \displaystyle v(x,t)=0,&\displaystyle x\ge\xi(t),\;0<t\le T,
      \end{cases}
 \end{equation}
 where
\begin{equation}
\label{eq:def.tau.xi}
 \tau_\xi(x)=0\ \text{for }x\le s_0,\quad \tau_\xi(x)=
 \sup\{t\ge0:\xi(t)=x\}\ \text{for }x\in (s_0,\xi(T))).
\end{equation}

Let us check that $\mathcal{T}(K_T)\subset K_T$. The only conditions that are not trivially verified are the bound $\|v(\cdot,t)\|_{L^1(\R)}\le \|u_0\|_{L^1(\R)}$ and the continuity of $v$ from $[0,T]$ into $L^1(\R)$ with $v(\cdot,0)=u_0$. Let us first see that the bound holds. Indeed, since $\|\mathcal{A}_Ju(\cdot,t)\|_{L^1(\mathbb{R})}=\|u(\cdot,t)\|_{L^1(\mathbb{R})}\le \|u_0\|_{L^1(\mathbb{R})}$, we have that\[\begin{aligned}
\int_{\mathbb{R}} v(\cdot,t)&=\textrm{e}^{-t}\|u_0\|_{L^1(\R)}+\int_{-\infty}^{\xi(t)}\int_{\tau_\xi(x)}^t \textrm{e}^{-(t-r)}\mathcal{A}_J u(x,r)\,\textrm{d}r\textrm{d}x\\
&=\textrm{e}^{-t}\|u_0\|_{L^1(\R)}+\int_0^t \textrm{e}^{-(t-r)}\int_{-\infty}^{\xi(r)}\mathcal{A}_J u(x,r)\,\textrm{d}x\textrm{d}r\le \|u_0\|_{L^1(\R)}.
\end{aligned}
\]

With a very similar computation we see that
\[
\|v(\cdot,t)-u_0\|_{L^1(\R)}\le 2(1-\textrm{e}^{-t})\|u_0\|_{L^1(\R)}\to0\quad\mbox{as } t\to0.
\]

In order to prove the continuity we assume that $t_2>t_1>0$ and, using~\eqref{T},  we get
\[\begin{aligned}
\|v(\cdot,t_1)&-v(\cdot,t_2)\|_{L^1(\R)}\le \big(\textrm{e}^{-t_1}-\textrm{e}^{-t_2}\big)\|u_0\|_{L^1(\R)}\\
&+\int_{-\infty}^{\xi(t_1)}\int_{\tau_\xi(x)}^{t_1}
\big(\textrm{e}^{-(t_1-r)}-\textrm{e}^{-(t_2-r)}\big)\mathcal{A}_J u(x,r)\,\textrm{d}r\textrm{d}x\\
&+\int_{-\infty}^{\xi(t_1)}\int_{t_1}^{t_2} \textrm{e}^{-(t_2-r)}\mathcal{A}_J u(x,r)\,\textrm{d}r\textrm{d}x+\int_{\xi(t_1)}^{\xi(t_2)}\int_{\tau_\xi(x)}^{t_2} \textrm{e}^{-(t_2-r)}\mathcal{A}_J u(x,r)\,\textrm{d}r\textrm{d}x.
\end{aligned}
\]
The monotonicity of $\xi$ and the definition of $\tau_\xi$ imply that $\tau_\xi(x)\ge t_1$ if $x\ge \xi(t_1)$,
and we conclude that
$$
\|v(\cdot,t_1)-v(\cdot,t_2)\|_{L^1(\R)}\le (T+2)(t_2-t_1)\|u_0\|_{L^1(\R)}.
$$

Notice that $(u,s)$ is a  solution to \eqref{eq:1D-1FB.integral.version} for $t\in[0,T]$ if and only if it is a fixed point of $\T:K_T\hookrightarrow K_T$. Let us see that $\T$ is a strict contraction, and hence has a unique fixed point, if $T$ is small enough, how small depending only on $\|J\|_{L^\infty(\R)}$ and $\|u_0\|_{L^1(\R)}$.

Given $(u_i,s_i)\in K_T$, $i=1,2$, let  $(v_i,\xi_i)=\T(u_i,s_i)$.
Since $\|\mathcal{A}_J f(\cdot,t)\|_{L^1(\mathbb{R})}\le \|f(\cdot,t)\|_{L^1(\R)}$ and $\|\mathcal{A}_J f(\cdot,t)\|_{L^\infty(\mathbb{R})}\le \|J\|_{L^\infty(\R)}\|f(\cdot,t)\|_{L^1(\R)}$,
\[
\begin{aligned}
|\xi_1(t)-\xi_2(t)|\le&\left|\int_0^t\int_{s_1(r)}^\infty(\mathcal{A}_J u_1(x,r)-\mathcal{A}_J u_2(x,r))\,{\rm d}x{\rm d}r
\right|\\
&+\left|\int_0^t\int_{s_1(r)}^\infty\mathcal{A}_Ju_2(x,r)\,{\rm d}x{\rm d}r-\int_0^t\int_{s_2(r)}^\infty\mathcal{A}_Ju_2(x,r)\,{\rm d}x{\rm d}r
\right|
\\
\le&\int_0^t\int_{s_1(r)}^\infty(\mathcal{A}_J |u_1-u_2|(x,r)\,{\rm d}x{\rm d}r
+\int_0^t\int_{\min(s_1(r),s_2(r))}^{\max(s_1(r),s_2(r))}\mathcal{A}_Ju_2(x,r)\,{\rm d}x{\rm d}r
\\
\le& T\|u_1-u_2\|_{L^\infty(0,T;L^1(\R))}
+T\|J\|_{L^\infty(\R)}\|u_0\|_{L^1(\R)}\|s_1-s_2\|_{L^\infty(0,T)}.
\end{aligned}
\]
If $\xi_1(t)\le \xi_2(t)$, we get that
$\|v_1(\cdot,t)-v_2(\cdot,t)\|_{L^1(\R)}\le \sum_{i=1}^3 I_i$,
for $0<t<T$,
where
\[
\begin{aligned}
&I_1=\int_{-\infty}^{\xi_1(t)}\int_{\tau_1(x)}^t\textrm{e}^{-(t-r)} \mathcal{A}_J\big|u_1-u_2\big|(x,r)\,\textrm{d}r\textrm{d}x,\\
&I_2=\int_{-\infty}^{\xi_1(t)}\int_{\min(\tau_{\xi_1}(x),\tau_{\xi_2}(x))}^{\max(\tau_{\xi_1}(x),\tau_{\xi_2}(x))} \textrm{e}^{-(t-r)}\mathcal{A}_J u_2(x,r)\,\textrm{d}r\textrm{d}x,\\
&I_3=\int_{\xi_1(t)}^{\xi_2(t)}\int_{\tau_{\xi_2}(x)}^t \textrm{e}^{-(t-r)}\mathcal{A}_J u_2(x,r)\,\textrm{d}r\textrm{d}x.
\end{aligned}
\]
It is readily seen that
\[
I_1\le T\|u_1-u_2\|_{L^\infty((0,T);L^1(\R))}\quad\text{and}\quad
I_3\le  T\|J\|_{L^\infty(\R)}\|u_0\|_{L^1(\R)}|\xi_1(t)-\xi_2(t)|.
\]
On the other hand, since $\xi_1(t)\le \xi_2(t)$ and the functions $\tau_{\xi_1}$ and $\tau_{\xi_2}$ are nondecreasing,
$$
\int_{-\infty}^{\xi_1(t)}|\tau_{\xi_1}(x)-\tau_{\xi_2}(x)|\,\textrm{d}x\le \int_0^t|\xi_1(r)-\xi_2(r)|\,\textrm{d}r,
$$
and we get $I_2\le T\|J\|_{L^\infty(\R)}\|u_0\|_{L^1(\R)}\|\xi_1-\xi_2\|_{L^\infty(0,T)}$.
Thus,  for $t\in[0,T]$,
$$
\|v_1(\cdot,t)-v_2(\cdot,t)\|_{L^1(\R)}\le  T\|u_1-u_2\|_{L^\infty(0,T;L^1(\R))}
+2T\|J\|_{L^\infty(\R)}\|u_0\|_{L^1(\R)}\|\xi_1-\xi_2\|_{L^\infty(0,T)}.
$$
If $\xi_2(t)<\xi_1(t)$, we obtain the same estimate just exchanging the roles of $\xi_1$ and $\xi_2$.

Summarizing,
\begin{equation}
\label{eq:contractivity.1D-1FB}
\|\mathcal{T}(u_1,s_1)-\mathcal{T}(u_2,s_2)\|\le T L\|(u_1,s_1)-(u_2,s_2)\|\quad\text{for all }T\le 1
\end{equation}
for some constant $L>0$ depending only on $\|J\|_{L^\infty(\R)}$ and $\|u_0\|_{L^1(\R)}$.
This gives existence and uniqueness of a fixed point of $\T$ if $T<\min(1/L,1)$.   As $\|u(\cdot,t)\|_{L^1(\R)}$ does not increase, by iterating the procedure we get existence and uniqueness of a fixed point for all $T>0$.
 \end{proof}

We next prove that, if $u_0$ is not trivial,  the solution of problem~\eqref{eq:1D-1FB.integral.version} that we have just constructed is a solution to Problem (1D-1FB).
\begin{prop}
	\label{thm:existence.classical.1D-1FB}
 	Let $(u,s)$ be the unique solution to problem~\eqref{eq:1D-1FB.integral.version} provided by Lemma~\ref{lem:existence-line.integral.equation}. If $u_0\not\equiv0$, then $u>0$ in $\Omega$ and $\dot s>0$ in $\mathbb{R}_+$. Therefore, $(u,s)$ solves Problem~{\rm (1D-1FB)}.
\end{prop}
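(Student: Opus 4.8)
The plan is to establish the two positivity statements, $u>0$ in $\Omega$ and $\dot s>0$ on $\mathbb{R}_+$, in a mutually reinforcing way, and then to observe that these two facts are exactly what is needed to upgrade the integral solution of~\eqref{eq:1D-1FB.integral.version} into a genuine solution of Problem~(1D-1FB), following the remark made just before Lemma~\ref{lem:existence-line.integral.equation}: once $\dot s>0$ on $\mathbb{R}_+$, the inverse function $\tau$ is continuous on $(-\infty,s_\infty)$, $u$ becomes continuous on $\mathbb{R}\times\overline{\mathbb{R}_+}$, and differentiating the integral identities recovers the differential equation $\partial_t u=\mathcal{L}u$ in $\Omega$ together with the Stefan condition.

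First I would prove $u>0$ in $\Omega$. The natural tool is the second integral identity in~\eqref{eq:1D-1FB.integral.version},
\[
u(x,t)=\textrm{e}^{-t}u_0(x)+\int_{\tau(x)}^t\textrm{e}^{-(t-r)}\mathcal{A}_J u(x,r)\,\textrm{d}r,
\]
which shows $u\ge0$ everywhere and, more importantly, lets positivity propagate: if $u(\cdot,r)$ is positive on an interval for some $r$, then since $\mathcal{A}_J u(x,r)=\int J(x-y)u(y,r)\,\textrm{d}y$ is a $J$-weighted average over $B(x,d)$, the quantity $\mathcal{A}_J u(x,r)$ is strictly positive whenever $u(\cdot,r)$ is positive somewhere within distance $d$ of~$x$. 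Because $u_0\not\equiv0$ and $u_0$ is continuous, $u_0>0$ on some open interval, and hence $u(x,t)\ge\textrm{e}^{-t}u_0(x)>0$ there for all $t$. From this seed I would argue that the positivity set spreads: at any point $x$ with $\tau(x)<t$, the integrand $\mathcal{A}_J u(x,r)$ is strictly positive for $r$ in a subinterval of $(\tau(x),t)$ as soon as the positivity region has reached within distance~$d$ of~$x$, and iterating over consecutive steps of length $d$ covers all of~$\Omega$. The key point to make rigorous is that this reaching actually occurs before time~$t$ for every $(x,t)\in\Omega$; this is where the finite range~$d$ of~$J$ and the continuity of~$s$ (so that $\tau(x)<t$ whenever $x<s(t)$) must be combined.

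Next I would show $\dot s>0$. From the first identity and the Stefan condition, $\dot s(t)=\int_{s(t)}^\infty\mathcal{A}_J u(\cdot,t)$, and since $\mathcal{A}_J u\ge0$ this is at least nonnegative; strict positivity needs $\mathcal{A}_J u(x,t)>0$ for some $x>s(t)$, i.e.\ for some $x$ just outside the occupied region. But $\mathcal{A}_J u(x,t)=\int_{B(x,d)}J(x-y)u(y,t)\,\textrm{d}y$ is strictly positive precisely when $u(\cdot,t)$ is positive somewhere in $B(x,d)$, and by the positivity just established $u(\cdot,t)>0$ on all of $(-\infty,s(t))$; choosing $x\in(s(t),s(t)+d)$ guarantees $B(x,d)$ meets $(-\infty,s(t))$, so $\dot s(t)>0$. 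Here one must be slightly careful, since at $t=0$ the function $s$ may not yet have become strictly increasing if $u_0$ vanishes near $s_0$; the positivity argument of the previous step guarantees that for every $t>0$ the solution is already positive up to $s(t)$, which is what makes the flux strictly positive for all $t>0$.

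The main obstacle I expect is making the propagation-of-positivity argument airtight near the moving boundary, i.e.\ controlling the interaction between the (possibly slowly growing, initially merely nondecreasing) free boundary~$s$ and the finite-range averaging operator~$\mathcal{A}_J$. One has to rule out the degenerate scenario in which the initial data is supported far to the left of~$s_0$, so that near $s_0$ the solution starts out identically zero and it is not immediately clear that the flux at the boundary becomes positive; the resolution is to first establish interior positivity on a fixed interval from the continuity of $u_0$, and only then chain the finite-range steps outward, using that each step advances the positivity front by a definite amount~$d$ in space while consuming only a finite amount of time. Once positivity of $u$ on $(-\infty,s(t))$ is secured for every $t>0$, the strict monotonicity of $s$ and the regularity conclusions follow quickly, and the equivalence with Problem~(1D-1FB) closes the argument.
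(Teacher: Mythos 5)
Your overall strategy is sound, and it is essentially the mirror image of the paper's argument. The paper argues by contradiction: assuming $u(x_0,t_0)=0$ at some $(x_0,t_0)\in\Omega$, the integral identity forces both $u_0(x_0)=0$ and $\mathcal{A}_Ju(x_0,r)=0$ for $r\in(\tau(x_0),t_0)$, hence $u\equiv0$ on $(x_0-d,x_0+d)\times(\tau(x_0),t_0)$; iterating this finite-range step gives $u(\cdot,t)\equiv0$ for all $t\in(0,t_0]$, and continuity in $L^1(\R)$ at $t=0$ then contradicts $u_0\not\equiv0$. You instead propagate positivity forward from an interval where $u_0>0$. Both versions rest on the same mechanism (strict positivity of $J$ on $B(0,d)$ combined with the integral representation), and your deduction of $\dot s>0$ from interior positivity, as well as the final upgrade to Problem (1D-1FB) via continuity of $\tau$, coincide with the paper's. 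One remark: the backward (contradiction) version is lighter on bookkeeping, because the zero set propagates freely across the free boundary (where $u$ vanishes anyway), whereas your forward version must check at every step that the newly reached points still see, within range $d$, a region where positivity has already been established \emph{and} which lies below the free boundary.

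There is, however, a genuine flaw in the way you propose to close the crux that you yourself single out. You say each chaining step \lq\lq advances the positivity front by a definite amount $d$ in space while consuming only a finite amount of time.'' That is not enough: the proposition asserts $u>0$ at \emph{every} point of $\Omega$, including points $x$ far to the left of $\operatorname{supp}u_0$ at arbitrarily small times $t>0$ (all such points lie in $\Omega$), and it asserts $\dot s(t)>0$ for all $t>0$, however small. If each step cost a positive amount of time, after $n$ steps you would only reach distance $nd$ for times exceeding the accumulated cost, and neither conclusion would follow. The correct observation is that propagation is \emph{instantaneous}: prove by induction on $n$ that for every $t>0$, $u(\cdot,t)>0$ on $(a-nd,b+nd)\cap(-\infty,s(t))$, where $(a,b)$ is the initial positivity interval. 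The inductive step costs no time because for each $r\in(\tau(x),t)$ the inductive hypothesis applied at time $r$ gives $\mathcal{A}_Ju(x,r)>0$ --- here one uses that, by the definition of $\tau$ and the monotonicity of $s$, one has $s(r)>x-d$ and $s(r)\ge b$ for every $r>\tau(x)$, so that $(x-d,x+d)\cap(a-nd,b+nd)\cap(-\infty,s(r))$ is a nonempty open interval --- and hence
\[
u(x,t)\ \ge\ \int_{\tau(x)}^{t}\textrm{e}^{-(t-r)}\mathcal{A}_J u(x,r)\,\textrm{d}r\ >\ 0
\]
for every $t>\tau(x)$, no matter how close $t$ is to $\tau(x)$. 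With this correction your argument is complete; without it, the positivity of $u$ throughout $\Omega$, and in particular the strict monotonicity of $s$ near $t=0$ when $u_0$ vanishes near $s_0$, is not established.
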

\begin{proof}
	Let $(x_0,t_0)\in\Omega$ be such that $u(x_0,t_0)=0$. Then
   \[
   0=u(x_0,t_0)\ge \int_{\tau(x_0)}^{t_0}\textrm{e}^{-(t-r)}\mathcal{A}_J u(x_0,r)\,\textrm{d}r\ge0.
   \]
	Hence, $u=0$ in $(x_0-d,x_0+d)\times(\tau(x_0),t_0)$. Iterating this argument, starting with $x_1\in (x_0-d,x_0+d)$, we see that $u(\cdot,t)=0$ for every $t\in(0,t_0]$. Thus, since $u\in C(\overline{\mathbb{R}_+};L^1(\R))$, we would get $u_0\equiv0$, which is a contradiction.
	
	Once we have positivity in $\Omega$, the fact that $s$ is strictly increasing is immediate from the equation for $s$ in~\eqref{eq:1D-1FB.integral.version}.
   \end{proof}

The solution provided by Proposition~\ref{thm:existence.classical.1D-1FB} lies within a class in which there is uniqueness.
\begin{teo}\label{uniqueness-line} Problem {\rm (1D--1FB)} has a unique solution such that $u\in C(\overline{\mathbb{R}_+};L^1(\R))$.
\end{teo}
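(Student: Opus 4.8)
The plan is to reduce uniqueness for Problem~(1D-1FB) to the uniqueness already established for the integral system~\eqref{eq:1D-1FB.integral.version} in Lemma~\ref{lem:existence-line.integral.equation}. Existence of a solution with $u\in C(\overline{\mathbb{R}_+};L^1(\R))$ is already provided by Proposition~\ref{thm:existence.classical.1D-1FB} when $u_0\not\equiv0$ (the case $u_0\equiv0$ being trivial, with $(u,s)=(0,s_0)$), so the only new content is uniqueness.

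First I would take an arbitrary solution $(u,s)$ of Problem~(1D-1FB) lying in the prescribed class and show that it necessarily satisfies the integral system~\eqref{eq:1D-1FB.integral.version}. The first identity is obtained by integrating the Stefan condition $\dot s(t)=\int_{s(t)}^\infty \mathcal{A}_J u(\cdot,t)$ in time, and the third is exactly the exterior condition $u=0$ for $x\ge s(t)$. For the second (Duhamel) identity I would rewrite the equation $\partial_t u=\mathcal{A}_Ju-u$ on $\Omega$ as $\partial_r\big(\textrm{e}^{r}u(x,r)\big)=\textrm{e}^{r}\mathcal{A}_Ju(x,r)$ and integrate in $r$ from $\tau(x)$ to $t$, where $\tau$ is the function associated to $s$ as in~\eqref{eq:1D-1FB.integral.version}.

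The point that needs care is the value of $u$ at the lower endpoint $r=\tau(x)$. For $x\le s_0$ one has $\tau(x)=0$ and $u(x,0)=u_0(x)$; for $x\in(s_0,s_\infty)$, the continuity and monotonicity of $s$ give $s(\tau(x))=x$ and $s(r)>x$ for $r>\tau(x)$, so the horizontal segment $\{(x,r):\tau(x)<r<t\}$ lies in $\Omega$, where the equation holds, while $(x,\tau(x))$ sits on the boundary $\{x=s(r)\}$; by continuity of $u$ and the exterior condition this forces $u(x,\tau(x))=0$. Since $u_0(x)=0$ for $x>s_0$, both cases collapse into the single formula in~\eqref{eq:1D-1FB.integral.version}. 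I expect this verification --- in particular pinning down $u(x,\tau(x))=0$ and checking that the segment from $\tau(x)$ to $t$ stays inside $\Omega$ using only that $s$ is nondecreasing and continuous --- to be the main, if modest, obstacle; note that no strict monotonicity of $s$ need be assumed a priori.

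Once this is done, $(u,s)\in C(\overline{\mathbb{R}_+};L^1(\R))\times C(\overline{\mathbb{R}_+})$, using $s\in C^1\subset C(\overline{\mathbb{R}_+})$ together with the hypothesis $u\in C(\overline{\mathbb{R}_+};L^1(\R))$, is a solution of the integral system~\eqref{eq:1D-1FB.integral.version}. By the uniqueness part of Lemma~\ref{lem:existence-line.integral.equation}, it must coincide with the unique solution of that system, namely the one furnished by Proposition~\ref{thm:existence.classical.1D-1FB}. Hence Problem~(1D-1FB) admits exactly one solution in the class $u\in C(\overline{\mathbb{R}_+};L^1(\R))$, which is moreover strictly positive in $\Omega$ and has $\dot s>0$.
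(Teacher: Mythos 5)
Your proposal is correct and takes essentially the same route as the paper: existence is quoted from Proposition~\ref{thm:existence.classical.1D-1FB} (with the trivial solution $(0,s_0)$ when $u_0\equiv0$), and uniqueness is obtained by observing that any solution in the class $C(\overline{\mathbb{R}_+};L^1(\R))$ satisfies the integral system~\eqref{eq:1D-1FB.integral.version} and then invoking the fixed-point uniqueness, i.e.\ the contraction estimate~\eqref{eq:contractivity.1D-1FB} behind Lemma~\ref{lem:existence-line.integral.equation}. The only difference is that you spell out the Duhamel reduction and the verification $u(x,\tau(x))=0$, which the paper treats as immediate in the paragraph preceding Lemma~\ref{lem:existence-line.integral.equation}.
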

\begin{proof}
	We have already proved existence if $u_0\not\equiv0$. When $u_0\equiv 0$ we have the trivial solution $u=0$, $s=s_0$.  As for uniqueness, it follows easily from ~\eqref{eq:contractivity.1D-1FB}.
\end{proof}

\subsection{Comparison and regularity}
We have the following strong comparison principle.
\begin{prop}
	\label{prop:comparison} Let $(u,s)$ and $(\widehat u,\widehat s)$ be two solutions to Problem {\rm (1D-1FB)}  with initial data $(u_0,s_0)$ and $(\widehat{u}_0,\widehat{s}_0)$ respectively. 
	Assume that $u_0\ge \widehat{u}_0$ and, either $s_0>\widehat{s}_0$ or $s_0=\widehat{s}_0$ and  $|\{u_0>\widehat{u}_0\}\cap(s_0-d,s_0)|>0$. Then $u>\widehat{u}$ in the  set $\Omega=\{(x,t)\in \mathbb{R}\times\mathbb{R}_+: x<s(t),t>0\}$. 
\end{prop}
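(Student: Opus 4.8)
The plan is to first prove the \emph{weak} comparison $u\ge\widehat u$ in $\R\times\overline{\R_+}$ and $s\ge\widehat s$ on $\overline{\R_+}$, and then to upgrade it to the strict inequality in $\Omega$. One should resist the temptation to run a monotone iteration with the operator $\T$ of Lemma~\ref{lem:existence-line.integral.equation}: $\T$ is \emph{not} order preserving, since in the equation for $\xi$ a larger density raises the integrand $\mathcal{A}_Ju$ while a larger free boundary shrinks the region of integration, and these two effects compete. The density and the free boundary must therefore be compared \emph{simultaneously}.

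For the weak comparison I would rewrite Stefan's condition as a flux. Setting $K(a)=\int_a^\infty J$ and using that $u(\cdot,t)$ is supported in $(-\infty,s(t))$, Fubini gives
\begin{equation*}
\dot s(t)=\int_{-\infty}^{s(t)}u(y,t)\,K(s(t)-y)\,\textrm{d}y ,
\end{equation*}
and similarly for $\widehat s$. For a \emph{fixed} boundary position this is nondecreasing in $u$; hence, whenever $s=\widehat s$ and $u\ge\widehat u$ one has $\dot s\ge\dot{\widehat s}$, so the boundaries cannot cross while the densities are ordered. To make this quantitative I set $w^-=(\widehat u-u)^+$, $g^-=(\widehat s-s)^+$ and derive a closed linear system of integral inequalities for $n(t)=\|w^-(\cdot,t)\|_{L^1(\R)}$ and $g^-(t)$: the bound for $g^-$ follows from the flux formula together with the Lipschitz dependence on $\sigma$ of $\sigma\mapsto\int_{-\infty}^\sigma\widehat u(y,t)K(\sigma-y)\,\textrm{d}y$ (controlled by $\|J\|_{L^\infty(\R)}$, $\|\widehat u_0\|_{L^1(\R)}$ and a local bound for $\widehat u$), while the bound for $n$ comes from subtracting the representations \eqref{eq:1D-1FB.integral.version} of $u$ and $\widehat u$, the free-boundary coupling surfacing as the mismatch between the lower limits $\tau$ and $\widehat\tau$, which is itself controlled by $g^-$. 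Since $n(0)=g^-(0)=0$ by the (non-strict) hypotheses, Gronwall's lemma forces $n\equiv g^-\equiv0$.

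To obtain the strict inequality I would argue by contradiction, in the spirit of Proposition~\ref{thm:existence.classical.1D-1FB} (we may assume $u_0\not\equiv0$, the remaining case being degenerate). On $\{\widehat s(t)\le x<s(t)\}$ there is nothing to prove: there $\widehat u=0$ while $u>0$ by the positivity in Proposition~\ref{thm:existence.classical.1D-1FB}. So suppose $w=u-\widehat u$ vanishes at some $(x_0,t_0)\in\Omega$ with $x_0<\widehat s(t_0)$. Subtracting the representations \eqref{eq:1D-1FB.integral.version} and using $s\ge\widehat s$ (hence $\tau(x_0)\le\widehat\tau(x_0)$), the value $w(x_0,t_0)$ is a sum of three nonnegative terms: $\textrm{e}^{-t_0}(u_0-\widehat u_0)(x_0)$, the integral of $\mathcal{A}_Jw\ge0$ over $(\widehat\tau(x_0),t_0)$, and the integral of $\mathcal{A}_Ju\ge0$ over $(\tau(x_0),\widehat\tau(x_0))$. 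Each must vanish. The second, since $J>0$ on $(-d,d)$ and $w\ge0$, forces $w\equiv0$ on $(x_0-d,x_0+d)\times(\widehat\tau(x_0),t_0)$; the third forces $\tau(x_0)=\widehat\tau(x_0)$ (otherwise $\mathcal{A}_Ju>0$ on that interval, as $u>0$ in $\Omega$); the first forces $u_0(x_0)=\widehat u_0(x_0)$.

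Propagating the vanishing of $w$ spatially in steps of length $d$ and backward in time, exactly as in the positivity argument, spreads these identities until they meet the strict hypothesis: if $s_0>\widehat s_0$ one reaches some $x\in(\widehat s_0,s_0)$ with $\tau(x)=0<\widehat\tau(x)$, contradicting $\tau(x)=\widehat\tau(x)$; if $s_0=\widehat s_0$, letting $t\downarrow0$ one gets $u_0=\widehat u_0$ a.e.\ on $(s_0-d,s_0)$, contradicting $|\{u_0>\widehat u_0\}\cap(s_0-d,s_0)|>0$. The main obstacle is the weak comparison: because $\T$ is not monotone, the orderings of the densities and of the free boundaries cannot be obtained one at a time and must be coupled through the Gronwall estimate above; and in the strong-comparison step one must keep careful track, during the propagation of zeros, that one stays inside $\Omega$ and that the spreading actually reaches the set supporting the strict hypothesis.
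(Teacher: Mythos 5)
Your argument is correct, but it takes a genuinely different route from the paper's, which is a short continuation argument on the free boundaries: the strict hypotheses give $s(t)>\widehat s(t)$ on some initial interval (in the tangential case $s_0=\widehat s_0$ because the measure condition and $J>0$ on $(-d,d)$ force $\dot s(0)>\dot{\widehat s}(0)$ through Stefan's condition); while the boundaries are strictly ordered one deduces $u>\widehat u$ below $s(t)$; and if $\bar t$ were the first time the boundaries touched, the strict ordering of the densities at $\bar t$ would give $\dot s(\bar t)>\dot{\widehat s}(\bar t)$, incompatible with $s-\widehat s$ being positive before $\bar t$ and zero there. So in the paper strictness is created at $t=0^+$ and propagated \emph{forward}; there is no separate weak comparison step. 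You instead prove the non-strict comparison first, via a Gronwall system for $n(t)=\|(\widehat u-u)^+(\cdot,t)\|_{L^1(\R)}$ and $g^-(t)=(\widehat s(t)-s(t))^+$ --- this closes as you claim: where $\widehat s>s$ the flux formula with $K$ nonincreasing gives $\frac{{\rm d}}{{\rm d}t}(\widehat s-s)\le n(t)$, and the $\tau$-mismatch sets $\{x:\widehat\tau(x)<r<\tau(x)\}$ lie in $[s(r),\widehat s(r))$, hence have measure at most $g^-(r)$, while the strip $s(t)\le x<\widehat s(t)$ contributes at most $\|J\|_{L^\infty(\R)}\|\widehat u_0\|_{L^1(\R)}g^-(t)$ --- and you then upgrade to strict inequality by propagating zeros of $w=u-\widehat u$ through the representation \eqref{eq:1D-1FB.integral.version}, in the style of Proposition~\ref{thm:existence.classical.1D-1FB}, until the identities $\tau=\widehat\tau$ and $u_0=\widehat u_0$ collide with the strict hypotheses at $t=0$. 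Both routes are valid; what yours buys is that the two steps the paper leaves implicit (the deduction ``strict boundary order implies $u>\widehat u$'', and the strictness of the density ordering \emph{at} the touching time $\bar t$, which continuity alone does not give) are made explicit, reusing the quantitative machinery behind \eqref{eq:contractivity.1D-1FB} and the positivity argument; your observation that $\T$ of Lemma~\ref{lem:existence-line.integral.equation} is not order-preserving correctly identifies why densities and boundaries must be compared simultaneously in any proof. What the paper's route buys is brevity and no Gronwall bookkeeping; the price of yours is exactly the care you flag in the zero-propagation step (staying inside both $\Omega$ and $\widehat\Omega$ while moving in spatial steps of size less than $d$ and losing arbitrarily little time at each step), which does go through.
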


\begin{proof}
	The assumptions imply immediately that  there exists $t_0>0$ such that  $s(t)>\widehat s(t)$ for $0<t<t_0$. Then, we deduce that $u>\widehat u$ in the  set $x<s(t)$ for $0<t\le t_0$. Assuming $\bar t=\sup\{\tau>0: s(t)>\widehat s(t)\mbox{ for } 0<t<\tau\}<\infty$ we get a contradiction at time $\bar t$ where $s(\bar t)=\widehat s(\bar t)$ since $u(x,\bar t)>\widehat u(x,\bar t)$ for $x<s(\bar t)$ and the free boundary condition then implies that $\dot s(\bar t)>\dot{\widehat s}(\bar t)$. Therefore, $s(t)>\widehat s(t)$ for every  $t>0$ and we deduce that $u>\widehat u$ in $x<s(t)$ for $t>0$.
 \end{proof}

We now turn our attention to the regularity of the solution.

 \begin{prop}
 	\label {prop:regularity} Let $(u,s)$ be a solution to Problem {\rm (1D-1FB)}. Then, $s\in C^\infty(\mathbb{R}_+)$  and $u\in C^\infty(\{s_0\le x< s_\infty,\ t\ge\tau(x)\})$. On the other hand, in the set
 $\{x\le s_0,\ t>0\}$ the solution $u$ is as smooth as the initial datum $u_0$.
\end{prop}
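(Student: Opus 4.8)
The plan is to prove the regularity statement by bootstrapping from the integral representation~\eqref{eq:1D-1FB.integral.version}, exploiting the strict positivity $\dot s>0$ established in Proposition~\ref{thm:existence.classical.1D-1FB}. The key structural fact is that $\tau$ is the inverse function of $s$ on $(s_0,s_\infty)$: since $s\in C^1$ with $\dot s>0$, the inverse function theorem gives that $\tau\in C^1((s_0,s_\infty))$, and more regularity of $s$ will translate into more regularity of $\tau$, and vice versa. I would set up the argument as a simultaneous induction, improving the regularity of $s$, $u$, and $\tau$ one derivative at a time.

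First I would treat the easy region $\{x\le s_0,\ t>0\}$, where $\tau(x)=0$ and the representation reads $u(x,t)=\textrm{e}^{-t}u_0(x)+\int_0^t\textrm{e}^{-(t-r)}\mathcal{A}_Ju(x,r)\,\textrm{d}r$. Here the $t$-dependence is smooth by differentiating under the integral sign (the integrand depends smoothly on $t$ through the exponential and the upper limit), so $u$ is $C^\infty$ in $t$; its $x$-regularity is inherited from $u_0$ and from the smoothing of the convolution $\mathcal{A}_J$, which explains the claim that $u$ is \lq\lq as smooth as $u_0$'' there. Next, for the function $s$ itself, I would rewrite its equation as $\dot s(t)=\int_{s(t)}^\infty\mathcal{A}_Ju(\cdot,t)$ and argue that the right-hand side is $C^1$ in $t$: it depends on $t$ through the lower limit $s(t)$ (already $C^1$) and through $u(\cdot,t)$. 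The time-regularity of the map $t\mapsto\int_{s(t)}^\infty\mathcal{A}_Ju(\cdot,t)$ then forces $\dot s$ to be $C^1$, hence $s\in C^2$; iterating upgrades $s$ to $C^\infty(\mathbb{R}_+)$.

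The core of the argument is the joint bootstrap in the region $\{s_0\le x<s_\infty,\ t\ge\tau(x)\}$. Suppose inductively that $s\in C^k$, so $\tau\in C^k$ on $(s_0,s_\infty)$ by the inverse function theorem, and that $u$ is $C^k$ up to the curve. Differentiating the representation $u(x,t)=\textrm{e}^{-t}u_0(x)+\int_{\tau(x)}^t\textrm{e}^{-(t-r)}\mathcal{A}_Ju(x,r)\,\textrm{d}r$ in $t$ produces smooth $t$-dependence as before; differentiating in $x$ produces a boundary term $-\textrm{e}^{-(t-\tau(x))}\mathcal{A}_Ju(x,\tau(x))\,\tau'(x)$ plus an integral term, and since $\mathcal{A}_Ju(x,\tau(x))$ vanishes to leading order consistent with $u=0$ on the boundary (indeed $u(x,\tau(x))=0$), one checks this term is as regular as $\tau'$ and the convolution allow. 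Using that $\mathcal{A}_Ju=J\ast u$ gains regularity in $x$ from the smoothness of $J$, each differentiation stays controlled, yielding $u\in C^{k+1}$, which via the $s$-equation feeds back to $s\in C^{k+1}$ and $\tau\in C^{k+1}$, closing the induction.

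The main obstacle I anticipate is the matching across the free boundary curve $t=\tau(x)$ (equivalently $x=s(t)$): one must show that the derivatives computed from the two branches of the representation agree there and that no singularity is introduced by the moving lower limit $\tau(x)$. The delicate point is controlling the boundary contribution in the $x$-derivatives — here it is essential that $u$ vanishes on the curve and that $\dot s>0$ keeps $\tau$ Lipschitz (indeed $C^1$ and better) rather than merely monotone, so that $\tau'(x)=1/\dot s(\tau(x))$ is well defined and inherits regularity from $s$. Once the compatibility of the one-sided derivatives at $t=\tau(x)$ is verified, the $C^\infty$ conclusion follows by running the induction to all orders.
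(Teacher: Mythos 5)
Your overall strategy --- bootstrapping between the integral representation, the relation $\tau=s^{-1}$ (so that $\dot s>0$ gives $\tau\in C^1$), and the free boundary equation --- is the same as the paper's, but the core induction step, as you state it, does not close. From the hypothesis $s,\tau,u\in C^k$ you claim that differentiating the representation yields $u\in C^{k+1}$. It cannot: the $x$-derivative contains the boundary term $-\tau'(x)\,\textrm{e}^{-(t-\tau(x))}\mathcal{A}_Ju(x,\tau(x))$, so $k$ further $x$-differentiations produce $\tau^{(k+1)}$, which does not exist under your hypothesis; the spatial regularity of $u$ is capped by that of $\tau$, and no derivative is gained this way. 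The gain of one derivative must come from the free boundary equation, precisely because its right-hand side $\int_{s(t)}^\infty\mathcal{A}_Ju(\cdot,t)$ involves $u$ itself and no derivatives of $u$ or $\tau$: time-regularity of $u$ of order $k$ in the $C(\overline{\mathbb{R}_+};L^1(\mathbb{R}))$ sense (obtained by iterating $\partial_t u=\mathcal{A}_Ju-u$ in $\Omega$, extended by zero --- this costs no regularity of $\tau$) gives $s\in C^{k+1}$ via formulas like $\ddot s(t)=-\dot s(t)\mathcal{A}_Ju(s(t),t)+\int_{s(t)}^\infty\mathcal{A}_J\partial_t u(\cdot,t)$, hence $\tau\in C^{k+1}$, and only \emph{then} does the representation give $u\in C^{k+1}$ up to the curve. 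This is exactly the paper's ordering. Your second paragraph (get $s\in C^\infty$ first by iterating the ODE) is in fact the right idea, but you leave unjustified precisely the ingredient it needs --- higher time-regularity of $u$, which must be extracted from the equation rather than from the $x$-representation --- while your third paragraph then re-derives the regularity of $s$ from that of $u$, so the logic as written is circular.

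Two side remarks in your proposal are also incorrect, though repairable. First, $\mathcal{A}_Ju(x,\tau(x))$ does \emph{not} vanish on the free boundary: it is a nonlocal average of $u$ over a ball that meets $\Omega$, where $u>0$, so it is strictly positive there (only $u$ itself vanishes on the curve). Hence no cancellation helps you with the boundary term; it simply carries the regularity of $\tau'$, which is why the ordering above matters. Second, there is no ``matching of derivatives across the curve'' to verify: the proposition asserts one-sided smoothness in the closed region $\{t\ge\tau(x)\}$, and derivatives genuinely jump across the free boundary ($\partial_t u$ has a positive one-sided limit from $\Omega$ and vanishes on the other side); similarly, as the paper remarks after the proposition, $u$ is in general only continuous across the line $x=s_0$ no matter how smooth $u_0$ is.
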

\begin{proof}
We already know that $\partial_t u\in C(\Omega)$. On the other hand, since $\dot s(t)>0$ for every $t>0$, then $\tau\in C^1((s_0,s_\infty))$. Hence, for $s_0<x<s_\infty$, $t\ge\tau(x)$ there exists
\[
\begin{aligned}
\partial_x u(x,t)&=-\tau'(x) \textrm{e}^{-(t-\tau(x))}\mathcal{A}_J u(x,\tau(x))\\
&+\int_{\tau(x)}^t \textrm{e}^{-(t-r)}\int_{\mathbb{R}}J'(x-y)u(y,r)\,\textrm{d}y\textrm{d}r\in C(\{s_0< x< s_\infty,\ t\ge \tau(x)\}).
\end{aligned}
\]

Since  $\partial_t u(x,t)=0$ for $x>s(t)$, it is easy to check that $\partial_t u\in C(\overline{\mathbb{R}_+};L^1(\R))$.
Now, we go back to the equation for the free boundary and we get that there exists
\[
\ddot s(t)=-\dot s(t)\mathcal{A}_J u(s(t),t)+\int_{s(t)}^\infty\mathcal{A}_J \partial_t u(x,t)\,\textrm{d}x,
\]
so that $s\in C^2(\overline{\mathbb{R}_+})$, and therefore also $\tau\in C^2((s_0,s_\infty))$.
Then, we go back to the formulas for $\partial_x u$ and $\partial_t u$ and we get that $u\in C^2(\{s_0< x<s_\infty,\ t\ge\tau(x)\})$ and $\partial^2_{tt}u\in C(\overline{\mathbb{R}_+};L^1(\R))$.

Iterating this analysis we get the desired regularity result in the region $s_0< x<s_\infty$, $t\ge\tau(x)$.

Finally, in the region $x\le s_0,\ t\ge0$ there holds that $u(x,t)=\textrm{e}^{-t}u_0(x)+h(x,t)$ with
\[
h(x,t)=\int_0^t \textrm{e}^{-(t-r)}\mathcal{A}_J u(x,r)\,\textrm{d}r.
\]
As before, we get that $h\in C^\infty(\{x\le s_0,\ t\ge0\})$ and, since $\textrm{e}^{-t}u_0(x)$ is as smooth as $u_0(x)$ we have the statement of the proposition also in this region.
\end{proof}
Let us remark that solutions are in general only continuous for  $x=s_0$, no matter how smooth  the initial datum is. This is in sharp contrast with the local Stefan problem, for which solutions are $C^\infty$ in $\Omega$. 

\subsection{Asymptotic behaviour} 
Our next aim is to characterize the large time behaviour of the solution to Problem (1D-1FB). The first step is to prove that the rate of growth of the habitable region coincides with the rate of decay of the population, an ingredient which was already present in the modeling. As a consequence, the function $s$ that gives the position of the free boundary  is bounded.
\begin{prop}
	Let $(u,s)$ be a solution to Problem {\rm (1D-1FB)}. Then, $\dot s(t)=-\dot M(t)$, and hence, $s(t)\le M (0)+s_0$.
\end{prop}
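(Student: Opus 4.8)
The plan is to show that the quantity $M(t)+s(t)$ is conserved in time. The identity $\dot s(t)=-\dot M(t)$ is an integrated form of the modeling balance between the mass flux lost through the free boundary and the total population; once it is established, the bound $s(t)\le M(0)+s_0$ follows immediately by integration, using $M\ge0$.

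First I would record that $M\in C^1$. By the regularity obtained in Proposition~\ref{prop:regularity} we have $\partial_t u\in C(\overline{\mathbb{R}_+};L^1(\R))$, so differentiation under the integral sign is legitimate and $\dot M(t)=\int_{\R}\partial_t u(x,t)\,\textrm{d}x$. Since $u(\cdot,t)\equiv0$ on $[s(t),\infty)$, the integrand vanishes there, and since $u$ is continuous with $u(s(t),t)=0$ no contribution from the moving endpoint appears. It therefore suffices to integrate the equation $\partial_t u=\mathcal{A}_Ju-u$ over $\Omega_t=(-\infty,s(t))$, which gives
\[
\dot M(t)=\int_{-\infty}^{s(t)}\mathcal{A}_Ju(x,t)\,\textrm{d}x-\int_{-\infty}^{s(t)}u(x,t)\,\textrm{d}x.
\]

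Next I would exploit that convolution with $J$ preserves mass: since $\int_{\R}J=1$, Fubini's theorem gives $\int_{\R}\mathcal{A}_Ju(\cdot,t)=\int_{\R}u(\cdot,t)=M(t)$. Splitting this integral at $s(t)$ and recognizing the tail as the right-hand side of the Stefan condition yields
\[
\int_{-\infty}^{s(t)}\mathcal{A}_Ju(x,t)\,\textrm{d}x=M(t)-\int_{s(t)}^\infty\mathcal{A}_Ju(x,t)\,\textrm{d}x=M(t)-\dot s(t).
\]
On the other hand $\int_{-\infty}^{s(t)}u(x,t)\,\textrm{d}x=M(t)$, again because $u$ vanishes past $s(t)$. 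Substituting both identities into the expression for $\dot M(t)$ gives $\dot M(t)=\bigl(M(t)-\dot s(t)\bigr)-M(t)=-\dot s(t)$. Integrating on $(0,t)$ and using $s(0)=s_0$ yields $s(t)-s_0=M(0)-M(t)\le M(0)$, the claimed bound.

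The only delicate point is the justification of $\dot M(t)=\int_{\R}\partial_t u$ with no boundary term from the moving endpoint $s(t)$; this is exactly where the continuity of $u$ (so that $u(s(t),t)=0$) and the $L^1$-continuity of $\partial_t u$ from Proposition~\ref{prop:regularity} enter. Everything else is a direct computation.
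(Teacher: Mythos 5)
Your proof is correct and takes essentially the same route as the paper: differentiate $M$ under the integral (no boundary term since $u(s(t),t)=0$), insert the equation, and use Fubini together with $\int_{\R}J=1$ and the Stefan condition to identify the loss term as $-\dot s(t)$. Your packaging via the global identity $\int_{\R}\mathcal{A}_Ju(\cdot,t)=M(t)$ is just a cosmetic rearrangement of the paper's explicit double-integral computation.
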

\begin{proof} A straightforward computation gives
	\[
	\begin{aligned}
	\dot M(t)&= \int_{-\infty}^{s(t)} \partial_t u(\cdot,t) =  \int_{-\infty}^{s(t)} \mathcal{A}_J u(\cdot,t) - \int_{-\infty}^{s(t)}u(\cdot,t)\\
	&= \int_{-\infty}^{s(t)}\int_{-\infty}^{s(t)}J(x-y)u(y,t)\,\textrm{d}x\textrm{d}y - \int_{-\infty}^{s(t)}u(y,t)\,\textrm{d}y\\
	&=-\int_{-\infty}^{s(t)}\int_{s(t)}^\infty J(x-y)u(y,t)\,\textrm{d}x\textrm{d}y= -\dot s(t).
	\end{aligned}
	\]
	Hence, $s(t)=-M(t)+M(0)+s_0\le M (0)+s_0$.
\end{proof}

In view of this result, we expect $u$ to behave for large times like solutions to the problem on the half-line
\begin{equation}
\label{eq:problem-half-line}
\partial_t v_a-\mathcal{L}v_a=0\quad\mbox{in }(-\infty,a]\times(t_0,\infty),\qquad
v_a=0\quad\mbox{in } (a,\infty)\times(t_0,\infty),
\end{equation}
with $a=s_\infty$ and $t_0$ large.  The special case $a=0$ was studied in~\cite{ceqw2}. All other cases are reduced to it by a traslation, and thus, from the results for $a=0$  we get
$$
\|v_a(\cdot,t)\|_{L^\infty((-\infty,a))}=O(t^{-1}),\qquad \int_{-\infty}^av_a(\cdot,t)=O(t^{-1/2}),
$$
if $v_a$ has a finite first moment at the initial time. Moreover,  
\begin{equation}
\label{eq:convergence.v_a}
\sup_{x<a}\frac {t^{3/2}}{|x|+1}\Big|v_a(x,t)-2M^a_\phi(t_0)\,\frac{\phi(x-a)}{a-x}\mathcal{D}_\a(x-a,t)\Big|\to0\quad\text{as }t\to\infty,
\end{equation}
where 
\begin{equation}\label{eq:phi}
L\phi=0\quad\text{in }\overline{\mathbb{R}}_-, \qquad  \phi=0\quad\text{in } \mathbb{R}_+,\quad |x+\phi(x)|\le C<\infty\quad\text{for }x\in \R_-,
\end{equation}
$M_\phi^a(t_0)=\int_{\mathbb{R}} v_a(x,t_0)\phi(x-a)\,{\rm d}x$, and ${\mathcal D}_\a$ is the so-called dipole solution to the local heat equation with diffusivity $\a=\frac12\int_{\mathbb{R}} J(\xi)\xi^2\,\text{\rm d}\xi$,
$$
{\mathcal D}_\a(x,t)=-\frac x{2\a t}\frac{\textrm{e}^{-\frac{|x|^2}{4\a t}}}{(4\pi\a t)^{1/2}}.
$$

As a first hint that we are on the right track, we prove that,  if the initial data has a finite first moment, then the solution decays  at the same rate as  solutions to the problem on the half-line. An analogous result holds for the mass. As a byproduct, we obtain the limit value of $s$.  
\begin{prop}
	\label{prop:conv.free.boundary.1D-1FB}
	Let $(u,s)$ be a solution to Problem {\rm (1D-1FB)}.
	If $\int_{\mathbb{R}} |x|u_0(x)\,\text{\rm d}x<\infty$, then $\|u(\cdot,t)\|_{L^\infty(\mathbb{R})}=O(t^{-1})$ and  $M(t)=O(t^{-1/2})$. As a consequence of the latter estimate,  $s_\infty=s_0+M(0)$ and  $s_\infty-s(t)=O(t^{-1/2})$ as $t\to\infty$.
\end{prop}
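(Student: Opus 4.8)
The plan is to trap $u$ from above by the solution of the fixed Dirichlet problem on the half-line $(-\infty,s_\infty]$, for which the decay rates are already available through \cite{ceqw2}, and then to read off the statements for $M$ and $s$ from the identity $\dot s=-\dot M$ established in the previous proposition. That identity already gives $s(t)=s_0+M(0)-M(t)$, so $s$ is nondecreasing and bounded by $s_0+M(0)$, the limit $s_\infty=\lim_{t\to\infty}s(t)$ exists, and $\Omega_t=(-\infty,s(t))\subset(-\infty,s_\infty)$ for every $t$. Let $v$ be the solution of \eqref{eq:problem-half-line} with $a=s_\infty$, initial time $t_0=0$ and datum $u_0$. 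Since $\int_{\R}|x|u_0<\infty$ and $u_0\in L^1_+(\R)$, this datum has a finite first moment relative to $s_\infty$, so the half-line estimates recalled above from \cite{ceqw2} give $\|v(\cdot,t)\|_{L^\infty((-\infty,s_\infty))}=O(t^{-1})$ and $\int_{-\infty}^{s_\infty}v(\cdot,t)=O(t^{-1/2})$.

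The crux is the pointwise comparison $0\le u\le v$. I would argue that $u$ is a subsolution of the fixed-domain problem: in $(-\infty,s(t))$ it satisfies $\partial_t u=\mathcal{L}u$, while in $[s(t),s_\infty)$ one has $u=0$, hence $\partial_t u=0\le \mathcal{A}_J u=\mathcal{L}u$ because $u\ge0$. To convert this into an inequality I would work with the Duhamel formulation, mirroring Lemma~\ref{lem:existence-line.integral.equation} rather than invoking a pointwise maximum principle. Setting $(\mathcal{S}w)(x,t)=\textrm{e}^{-t}u_0(x)+\int_0^t\textrm{e}^{-(t-r)}\mathcal{A}_J w(x,r)\,\textrm{d}r$ for $x<s_\infty$ and $(\mathcal{S}w)(x,t)=0$ for $x\ge s_\infty$, the function $v$ is the unique fixed point of $\mathcal{S}$, and $\mathcal{S}$ is order preserving (since $\mathcal{A}_J$ preserves positivity) and a contraction on short time intervals. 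From \eqref{eq:1D-1FB.integral.version}, using $u_0(x)=0$ and $\tau(x)\ge0$ for $x>s_0$ together with $\mathcal{A}_J u\ge0$, one checks $u\le \mathcal{S}u$ everywhere. Monotonicity then gives $u\le \mathcal{S}u\le \mathcal{S}^2u\le\cdots$, and since the iterates of $\mathcal{S}$ converge to $v$ on each small interval, $u\le v$ there; iterating in time from the inequality at the endpoints propagates $u\le v$ to all $t>0$.

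With this comparison the first two estimates are immediate: $\|u(\cdot,t)\|_{L^\infty(\R)}\le\|v(\cdot,t)\|_{L^\infty((-\infty,s_\infty))}=O(t^{-1})$, and $M(t)=\int_{-\infty}^{s(t)}u\le\int_{-\infty}^{s_\infty}v=O(t^{-1/2})$. Finally, since $M(t)=O(t^{-1/2})\to0$, passing to the limit in $s(t)=s_0+M(0)-M(t)$ yields $s_\infty=s_0+M(0)$, and consequently $s_\infty-s(t)=M(t)=O(t^{-1/2})$.

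I expect the main obstacle to be the rigorous justification of $u\le v$: one has to play the moving boundary $s(t)$ against the fixed boundary $s_\infty$, verify the subsolution inequality across $\{x=s(t)\}$ where $u$ is merely continuous, and confirm that the finite-first-moment hypothesis on $u_0$ is precisely what the half-line asymptotics of \cite{ceqw2} require. The decay rates themselves are imported, so once the comparison is in place the remaining steps are routine.
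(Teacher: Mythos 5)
Your proof is correct and takes essentially the same route as the paper: compare $u$ from above with the solution $v$ of the fixed half-line problem \eqref{eq:problem-half-line} (the paper takes $a=s_0+M(0)$ rather than $a=s_\infty$, which is immaterial since both domains contain every $\Omega_t$), import the decay rates from \cite{ceqw2}, and conclude via $s(t)=s_0+M(0)-M(t)$. The only difference is that you justify the inequality $u\le v$ by monotone iteration of the Duhamel operator $\mathcal{S}$, whereas the paper simply invokes the comparison principle; your argument is a legitimate (and correctly executed) way of filling in that step.
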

\begin{proof}
	Let $v$ be the solution to problem~\eqref{eq:problem-half-line} with $a=M(0)+s_0$ and $t_0=0$, and initial data $v(\cdot,0)=u_0$.  By the comparison principle, $u\le v$ and the estimates follow from the corresponding estimates for $v$. 
	
	The estimate for the free boundary is immediate from $s_0+M(0)-s(t)=M(t)$.
\end{proof}

We now prove that the asymptotic behaviour is like the one for solutions to the problem posed in the limit support, the half-line $(-\infty,s_\infty)$. 

\begin{prop}
	\label{prop:1D-1FB.convergence} Let $(u,s)$ be a solution to Problem {\rm (1D-1FB)} and $\phi$ as in~\eqref{eq:phi}. If $\int_{\mathbb{R}} u_0(x)|x|\,dx<\infty$, then
	$\lim_{t\to\infty}\int_{\mathbb{R}} u(x,t)(s_\infty-x)\,\text{\rm d}x=M^*\in\mathbb{R}$ and, for every $S<s_\infty$,
	\begin{equation}\label{asym}
	\sup_{x<S}\frac {t^{3/2}}{|x|+1}\Big|u(x,t)-2M^*\frac{\phi(x-s_\infty)}{s_\infty-x}\mathcal{D}_\a(x-s_\infty,t)\Big|\to0\quad\text{as } t\to\infty.
	\end{equation}
\end{prop}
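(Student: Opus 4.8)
The plan is to establish the two claims in turn: first the convergence of the weighted integral to a finite $M^*$, and then the pointwise profile by trapping $u$ between two solutions of the half-line problem \eqref{eq:problem-half-line} and invoking \eqref{eq:convergence.v_a}.

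For the first part I would work with the \emph{finite} quantity $m(t)=\int_{-\infty}^{s(t)}u(x,t)(s_\infty-x)\,{\rm d}x$ (finite since $u$ has a finite first moment, which is propagated from $u_0$). Differentiating, the boundary term vanishes because $u(s(t),t)=0$, so $m'(t)=\int_{-\infty}^{s(t)}\mathcal{L}u\,(s_\infty-x)\,{\rm d}x$. The key algebraic point is that $\psi(x)=s_\infty-x$ is affine and $J$ is even with unit mass, whence $\mathcal{L}\psi\equiv0$ and, by the symmetry of $\mathcal{A}_J$, $\int_{\R}\mathcal{L}u\,\psi=\int_{\R}u\,\mathcal{L}\psi=0$. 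Subtracting off the region $x>s(t)$, where $u=0$ but $\mathcal{L}u=\mathcal{A}_Ju$, I obtain $m'(t)=-\int_{s(t)}^{\infty}\mathcal{A}_Ju(x,t)(s_\infty-x)\,{\rm d}x$. On the support of $\mathcal{A}_Ju$ there one has $|s_\infty-x|\le M(t)+d$, and $\int_{s(t)}^\infty\mathcal{A}_Ju=\dot s=-\dot M$ by \eqref{eq:Stefan.condition.one.boundary}, so $|m'(t)|\le (M(t)+d)(-\dot M(t))$. Integrating in $t$ and changing variables to $M$ gives $\int_0^\infty|m'|\le \frac{M(0)^2}{2}+dM(0)<\infty$, so $m(t)$ converges to a finite limit, which I call $M^*$.

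For the profile I would set up a squeeze. Fix $t_0$ large. For the upper bound, let $v$ solve \eqref{eq:problem-half-line} on $(-\infty,s_\infty]$ with $v(\cdot,t_0)=u(\cdot,t_0)$; since $s(t)\le s_\infty$, the same comparison already used in the proof of Proposition~\ref{prop:conv.free.boundary.1D-1FB} yields $u\le v$ for $t\ge t_0$. For the lower bound, let $w$ solve \eqref{eq:problem-half-line} on $(-\infty,s(t_0)]$ with $w(\cdot,t_0)=u(\cdot,t_0)$; because $u\ge0$ makes $u$ a supersolution of the $w$-problem on the smaller fixed domain, a comparison argument gives $u\ge w$ for $t\ge t_0$. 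Both $v$ and $w$ have a finite first moment, so \eqref{eq:convergence.v_a} applies with the conserved coefficients $M^v_\phi(t_0)=\int u(\cdot,t_0)\phi(\cdot-s_\infty)$ and $M^w_\phi(t_0)=\int u(\cdot,t_0)\phi(\cdot-s(t_0))$. Using $|z+\phi(z)|\le C$ on the support of $u$, together with $s_\infty-s(t_0)=M(t_0)\to0$ and $m(t_0)\to M^*$, I can check that \emph{both} coefficients tend to $M^*$ as $t_0\to\infty$.

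The final step passes to the limit. For $x<S$ and $t_0$ large enough that $s(t_0)>S$, I bound the weighted difference $\frac{t^{3/2}}{|x|+1}(u-P)$, where $P$ is the target profile, from above by $\frac{t^{3/2}}{|x|+1}\big((v-P_v)+(P_v-P)\big)$ and from below by $\frac{t^{3/2}}{|x|+1}\big((w-P_w)+(P_w-P)\big)$. The terms $v-P_v$ and $w-P_w$ vanish in the weighted norm as $t\to\infty$ by \eqref{eq:convergence.v_a}. Since the powers of $t$ cancel and the Gaussian is bounded by $1$, the quantity $\frac{t^{3/2}}{|x|+1}\frac{\phi(x-a)}{a-x}\mathcal{D}_\a(x-a,t)$ is bounded uniformly in $t$ and $x<S$, so the pure coefficient mismatch in $P_v-P$ is controlled by $|M^v_\phi(t_0)-M^*|\to0$. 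The hard part will be $P_w-P$, where the coefficient \emph{and} the center differ: the crux is the continuous-dependence estimate
$$
\sup_{x<S}\frac{t^{3/2}}{|x|+1}\Big|\frac{\phi(x-a)}{a-x}\mathcal{D}_\a(x-a,t)-\frac{\phi(x-s_\infty)}{s_\infty-x}\mathcal{D}_\a(x-s_\infty,t)\Big|\to0
\quad\text{as }a=s(t_0)\to s_\infty,
$$
uniformly in large $t$, which requires a careful $a$-derivative bound verifying that the cancellation of $t$-powers survives differentiation in the center. Granting this estimate, taking $\limsup_{t\to\infty}$ and then letting $t_0\to\infty$ squeezes the weighted difference to $0$ and yields \eqref{asym}.
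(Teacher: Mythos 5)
Your argument is correct, and its second half (trapping $u$ between the half-line solutions with supports $(-\infty,s(t_0)]$ and $(-\infty,s_\infty]$ and data $u(\cdot,t_0)$, then letting $t_0\to\infty$) is exactly the paper's proof; where you genuinely differ is in how $M^*$ is produced. The paper never differentiates the linear moment: it works with $M_\phi^+(t)=\int u(x,t)\,\phi(x-s_\infty)\,\mathrm{d}x$, uses $\phi=J\ast\phi$ on $\mathbb{R}_-$ to obtain $\dot M_\phi^+(t)=-\int_{s(t)-d}^{s(t)}\int_{s(t)}^{s_\infty}J(x-y)\phi(x-s_\infty)u(y,t)\,\mathrm{d}x\,\mathrm{d}y\le0$, and then invokes the decay rates of Proposition~\ref{prop:conv.free.boundary.1D-1FB} ($\|u(\cdot,t)\|_{L^\infty}=O(t^{-1})$, $s_\infty-s(t)=O(t^{-1/2})$) to bound $|\dot M_\phi^+(t)|\le Ct^{-3/2}$; convergence follows from monotonicity plus integrability, and only afterwards is the limit identified with the first moment via $|z+\phi(z)|\le C$. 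You instead differentiate $m(t)=\int u(x,t)(s_\infty-x)\,\mathrm{d}x$ directly, exploit that affine functions are $\mathcal{L}$-harmonic ($J$ even, unit mass) to localize $m'$ on the strip $(s(t),s(t)+d)$, and close with the exact bookkeeping bound $\int_0^\infty|m'|\le \tfrac12 M(0)^2+dM(0)$ coming solely from the Stefan identity $\dot s=-\dot M$; the $\phi$-moments that \eqref{eq:convergence.v_a} actually requires are then recovered from $m(t_0)$ using $|z+\phi(z)|\le C$ and $M(t_0)\to0$. Your route is more elementary — no quantitative decay is needed to get convergence of the moment, only to merge the two coefficients — while the paper's route yields monotonicity of $M_\phi^+$ and the rate $|M_\phi^+(t)-M^*|=O(t^{-1/2})$ as byproducts. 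Finally, the center-perturbation estimate you leave ``granted'' is true and not hard: since $t^{3/2}\mathcal{D}_\a(x-a,t)=\frac{a-x}{2\a}(4\pi\a)^{-1/2}\mathrm{e}^{-(x-a)^2/(4\a t)}$, the weighted profile reduces to a constant times $\phi(x-a)\,\mathrm{e}^{-(x-a)^2/(4\a t)}/(1+|x|)$, and the claim follows from $\phi=J\ast\phi$ together with continuity of translations of $J$ in $L^1$ (which handles the linearly growing $\phi$ after division by $1+|x|$), plus the Lipschitz bound for $z\mapsto \mathrm{e}^{-z^2}$; it is worth noting that the paper's own proof silently relies on the same fact, since its lower barrier is centered at $s(t_0)\ne s_\infty$, so your explicit flagging of this step is, if anything, a gain in rigor.
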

\begin{proof}
	Given $t_0>0$,  let $F(x,t;t_0)$  be the solution to~\eqref{eq:problem-half-line} with $a=s(t_0)$ and initial data at $t=t_0$ given by $u(x,t_0$), and $G(x,t;t_0)$ the solution to the same problem, with the same initial data, but with $a=s_\infty$. A simple comparison argument gives $F(x,t;t_0)\le u(x,t)\le G(x,t;t_0)$ for $t>t_0$. Therefore~\eqref{asym}  will follow from~\eqref{eq:convergence.v_a} if we are able to prove that $M_\phi^-(t):=\int_{\mathbb{R}} u(x,t)\phi(x-s(t))\,{\rm d}x$ and $M_\phi^+(t):=\int_{\mathbb{R}} u(x,t)\phi(x-s_\infty)\,{\rm d}x$ have a common limit $M^*$. Observe that $|M_\phi^-(t)-M_\phi^+(t)|\le C M(t)\le C t^{-1/2}$. Hence it is enough to prove that  $M^*=\lim_{t\to\infty}M_\phi^+(t)$ exists and is finite. 
	
	Since $\phi(x-s_\infty)=\int_{-\infty}^{s_\infty}J(x-y)\phi(y-s_\infty)\,{\rm d}y$,  a simple computation gives
	\[
	\begin{aligned}
	\dot M_\phi^+(t) &=\int_{-\infty}^{s(t)}\Big(\int_{-\infty}^{s(t)} J(x-y)u(y,t)\,{\rm d}y\Big)\phi(x-s_\infty)\,{\rm d}x - \int_{-\infty}^{s(t)} u(x,t)\phi(x-s_\infty)\,{\rm d}x\\
	&=\int_{-\infty}^{s(t)}\Big(\int_{-\infty}^{s(t)}J(y-x)\phi(x-s_\infty){\rm d}x\Big)u(y,t)\,{\rm d}y-\int_{-\infty}^{s(t)}\phi(y-s_\infty)u(y,t)\,{\rm d}y\\
	&=-\int_{s(t)-d}^{s(t)}\int_{s(t)}^{s_\infty}J(x-y)\phi(x-s_\infty)u(y,t)\,{\rm d}x{\rm d}y.
	\end{aligned}
	\]
	Thus, $M_\phi^+$ is nonincreasing, and since $\phi$ is locally bounded, using Proposition~\ref{prop:conv.free.boundary.1D-1FB} we get
	\[
	|\dot M_\phi^*(t)|\le C d\,(s_\infty-s(t))\,t^{-1}\le C t^{-3/2},
	\]
	which implies that $M^*=\lim_{t\to\infty}M_\phi^+(t)$ is finite. 
	
	We finally check that $M^*$  coincides with the asymptotic first moment of $u$ with respect to the point $s_\infty$ (or with respect to any other point).
	Indeed, since $|x+\phi(x)|\le C$ for $x\in\mathbb{R}_-$,
	$$
	|M_\phi^+(t)-\int_{\mathbb{R}} u(x,t)(s_\infty-x)\,{\rm d}x|\le C M(t)\to 0\quad\text{as }t\to\infty.
	$$
\end{proof}

\subsection{Refined asymptotics for the free boundary}
We finally obtain an improved estimate of the asymptotic speed of the free boundary. 
\begin{prop}
	\label{prop:refined.asymptotics.fb}
	Under the assumptions of Proposition~\ref{prop:1D-1FB.convergence},
	\begin{equation}\label{sdot-asym}
	t^{3/2}\,\dot s(t)\to \frac{M^*}{2\sqrt\pi \a^{3/2}}\int_0^d \int_{-d}^0 J(x-y)\phi(y)\,{\rm d}y{\rm d}x.\end{equation}
\end{prop}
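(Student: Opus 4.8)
The plan is to start from the free boundary condition and use the finite range of $J$ to localize $\dot s(t)$ at the free boundary, so that the whole statement reduces to a pointwise asymptotic analysis of $u$ at the points $s(t)+\zeta$, $\zeta\in(-d,0)$. Since $u(\cdot,t)=0$ for $x\ge s(t)$ and $J$ vanishes outside $(-d,d)$, the substitution $x=s(t)+\eta$, $y=s(t)+\zeta$ turns the Stefan condition into
\[
\dot s(t)=\int_{s(t)}^\infty\mathcal{A}_J u(x,t)\,{\rm d}x=\int_0^d\int_{-d}^0 J(\eta-\zeta)\,u(s(t)+\zeta,t)\,{\rm d}\zeta\,{\rm d}\eta,
\]
whence $t^{3/2}\dot s(t)=\int_0^d\int_{-d}^0 J(\eta-\zeta)\,t^{3/2}u(s(t)+\zeta,t)\,{\rm d}\zeta\,{\rm d}\eta$. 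Comparing with~\eqref{sdot-asym}, it then suffices to prove that, for each fixed $\zeta\in(-d,0)$,
\[
t^{3/2}u(s(t)+\zeta,t)\longrightarrow \frac{M^*\phi(\zeta)}{2\sqrt\pi\,\a^{3/2}}\qquad\text{as }t\to\infty,
\]
together with a bound $t^{3/2}u(s(t)+\zeta,t)\le C$ valid for all $\zeta\in(-d,0)$ and all large $t$, which permits passing to the limit under the (bounded) double integral by dominated convergence.

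The core difficulty is that the convergence in Proposition~\ref{prop:1D-1FB.convergence} is uniform only on half-lines $x<S$ with $S<s_\infty$ fixed, that is, away from the free boundary, whereas the relevant points $s(t)+\zeta$ approach $s_\infty$. To get around this I would use the sandwich already employed in the proof of Proposition~\ref{prop:1D-1FB.convergence}, namely $F(x,t;t_0)\le u(x,t)\le G(x,t;t_0)$, where $F$ and $G$ solve the half-line problem~\eqref{eq:problem-half-line} with boundaries $s(t_0)$ and $s_\infty$, and for which the asymptotics~\eqref{eq:convergence.v_a} holds uniformly up to the boundary. The key elementary computation is the behaviour of the dipole near its own boundary: for bounded $\xi=x-a$ one has $\mathcal{D}_\a(\xi,t)\sim -\frac{\xi}{2\a(4\pi\a)^{1/2}}\,t^{-3/2}$, so that the profile in~\eqref{eq:convergence.v_a} satisfies
\[
2M\,\frac{\phi(\xi)}{-\xi}\,\mathcal{D}_\a(\xi,t)\sim \frac{M\,\phi(\xi)}{2\sqrt\pi\,\a^{3/2}}\,t^{-3/2}.
\]

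For the upper bound I take $a=s_\infty$ with $t_0$ fixed. Evaluating at $x=s(t)+\zeta$, the argument $\xi=x-s_\infty\to\zeta$ and, since $x$ stays bounded, the error $(|x|+1)\,o(1)$ in~\eqref{eq:convergence.v_a} vanishes as $t\to\infty$; hence $\limsup_t t^{3/2}u(s(t)+\zeta,t)\le \frac{M_\phi^+(t_0)\phi(\zeta)}{2\sqrt\pi\,\a^{3/2}}$, and letting $t_0\to\infty$ with $M_\phi^+(t_0)\to M^*$ gives the sharp upper bound. For the lower bound I use $F$ with $a=s(t_0)$: since $s_\infty-s(t_0)\to0$, for each fixed $\zeta<0$ I can choose $t_0$ so large that $s(t)+\zeta<s(t_0)$ for all $t>t_0$, so that the boundary asymptotics of $F$ applies at $x=s(t)+\zeta$. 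Passing to the limit $t\to\infty$ yields $\liminf_t t^{3/2}u(s(t)+\zeta,t)\ge \frac{M_\phi^-(t_0)\,\phi(\zeta+s_\infty-s(t_0))}{2\sqrt\pi\,\a^{3/2}}$, and then $t_0\to\infty$, using $M_\phi^-(t_0)\to M^*$ and the continuity of $\phi$ on $\R_-$, closes the pointwise limit. The main obstacle is precisely this matching at the moving boundary: the two sandwiching solutions have distinct fixed boundaries, and one must send $t\to\infty$ before $t_0\to\infty$ while keeping the evaluation point on the correct side of the $F$-boundary.

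Finally, the upper bound furnishes $t^{3/2}u(s(t)+\zeta,t)\le C$ uniformly in $\zeta\in(-d,0)$ for $t$ large, which is integrable over the bounded domain $(0,d)\times(-d,0)$. Dominated convergence then lets me pass the pointwise limit inside the integral, giving
\[
t^{3/2}\dot s(t)\longrightarrow \frac{M^*}{2\sqrt\pi\,\a^{3/2}}\int_0^d\int_{-d}^0 J(\eta-\zeta)\,\phi(\zeta)\,{\rm d}\zeta\,{\rm d}\eta,
\]
which is~\eqref{sdot-asym} after renaming $\eta,\zeta$ as $x,y$.
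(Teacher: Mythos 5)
Your proposal is correct and takes essentially the same route as the paper: both write $t^{3/2}\dot s(t)$ as a double integral of $J\cdot t^{3/2}u$ over a $d$-neighbourhood of the free boundary, identify the limit of $t^{3/2}u$ there through the dipole asymptotics ($t^{3/2}\mathcal{D}_\a(\xi,t)\to -\xi/(2\a(4\pi\a)^{1/2})$ for bounded $\xi$), and pass to the limit by dominated convergence. The only difference is one of detail, not of method: the paper tersely invokes locally uniform convergence on $(-\infty,s_\infty)$ together with an unproved boundedness of $t^{3/2}u$ up to $s_\infty$, whereas your sandwich $F\le u\le G$ (taken from the proof of Proposition~\ref{prop:1D-1FB.convergence}, with the boundary asymptotics~\eqref{eq:convergence.v_a} holding uniformly up to the fixed boundaries) is precisely the argument that justifies both that boundedness and the convergence at the moving points $s(t)+\zeta$.
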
	
\begin{proof}
	From Proposition~\ref{prop:1D-1FB.convergence} we get $	t^{3/2}u(x,t)\to \frac{M^*}{2\sqrt\pi \a^{3/2}}\phi(x-s_\infty)$ as $t\to\infty$ uniformly on compact sets of $(-\infty,s_\infty)$.
	Moreover,  for every $a>0$, $t^{3/2}u(x,t)$ is  bounded for  $x\in(-a,s_\infty)$ and  $t>0$.
	Hence,
	\[
	\begin{aligned}
	t^{3/2}\,\dot s(t)&=\int_{s(t)}^{s(t)+d}\int_{s(t)-d}^{s(t)} J(x-y)t^{3/2}u(y,t)\,{\rm d}y{\rm d}x\\
	&\to \frac{M^*}{2\sqrt\pi\a^{3/2}}\int_{s_\infty}^{s_\infty+d}\int_{s_\infty-d}^{s_\infty}
	J(x-y)\phi(y-s_\infty)\,{\rm d}y{\rm d}x\\
	&=\frac{M^*}{2\sqrt\pi\a^{3/2}}\int_{0}^{d}\int_{-d}^{0}
	J(x-y)\phi(y)\,{\rm d}y{\rm d}x,
	\end{aligned}
	\]
	so that \eqref{sdot-asym} holds.
\end{proof}

\subsection{Asymptotic behaviour for the corresponding local problem} 
The authors, together with M. Elgueta, showed in~\cite{ceqw2} that solutions to the local problem
\begin{equation*}
\label{eq:local.HL.1D}
\partial_t v-\partial^2_{xx} v=0\ \text{in }\mathbb{R}_-\times\mathbb{R}_+,\quad v(\cdot,0)=v_0(x)\ \text{for }x\in\mathbb{R}_-,\quad v(0,t)=0\ \text{for }t>0 
\end{equation*}
satisfy
$$
\|v(\cdot,t)\|_{L^\infty(\mathbb{R}_-)}=O(t^{-1}),\qquad \int_{\mathbb{R}_-}v(\cdot,t)=O(t^{-1/2}),
$$
if $\int_{\mathbb{R}_-}(1+|x|)v_0(x)\,{\rm d}x<\infty$. Moreover,  
\begin{equation*}
\sup_{\mathbb{R}_-}\frac {t^{3/2}}{|x|+1}\left|v(x,t)+\left(\int_{\mathbb{R}_-}|x|v_0(x)\,{\rm d}x\right)\,\frac x{t}\frac{\textrm{e}^{-\frac{|x|^2}{4 t}}}{(4\pi t)^{1/2}}\right|\to0\quad\text{as }t\to\infty.
\end{equation*}
Hence, using the ideas of the proofs of propositions~\ref{prop:conv.free.boundary.1D-1FB} and \ref{prop:1D-1FB.convergence} we can obtain the asymptotic profile for solutions of the local Stefan problem
\begin{equation}
\label{eq:local.Stef.1D-1FB}
\left\{
\begin{array}{l}
\partial_t u-\partial^2_{xx} u=0\ \text{in }\Omega=\{x<s(t), t>0\},\quad u(\cdot,0)=u_0(x)\ \text{for }x\in\mathbb{R},\\[8pt]
\dot s(t)=-\partial_x u(s(t), t)\ \text{for }t>0, \quad s(0)=s_0.
\end{array}
\right.
\end{equation}
 Since we have not found such results in the literature, we state them here for future reference. 
\begin{teo}
Let $(u,s)$ be the  solution to~\eqref{eq:local.Stef.1D-1FB} with $s_0\in\mathbb{R}$ and $u_0\ge0$  such that $u_0(x)=0$ for $x>s_0$,  $\int_{\mathbb{R}}(1+|x|)u_0(x)\,\text{\rm d}x<\infty$. Then:
	\begin{itemize}
		\item[(i)] $\|u(\cdot,t)\|_{L^\infty(\mathbb{R})}=O(t^{-1})$,  $\int_{\mathbb{R}}u(\cdot,t)=O(t^{-1/2})$;
		\item[(ii)] $s_\infty:=\lim\limits_{t\to\infty}s(t)=s_0+\int_{\mathbb{R}}u_0$, $s_\infty-s(t)=O(t^{-1/2})$ as $t\to\infty$; 
		\item[(iii)] $\lim\limits_{t\to\infty}\int_{\mathbb{R}} u(x,t)(s_\infty-x)\,\text{\rm d}x=:M^*\in\mathbb{R}$;
		\item[(iv)] for every $S<s_\infty$,
		$$
		\sup_{x<S}\frac {t^{3/2}}{|x|+1}\left|u(x,t)-\frac {M^*(s_\infty-x)}{t}\frac{\textrm{e}^{-\frac{|s_\infty-x|^2}{4 t}}}{(4\pi t)^{1/2}}\right|\to0\quad\text{as } t\to\infty.
		$$
	\end{itemize} 
\end{teo}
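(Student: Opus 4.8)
The plan is to transcribe the scheme developed for the nonlocal problem in Propositions~\ref{prop:conv.free.boundary.1D-1FB} and~\ref{prop:1D-1FB.convergence}, using the local half-line asymptotics stated just before the theorem in place of their nonlocal counterpart~\eqref{eq:convergence.v_a}, and replacing the nonlocal harmonic weight $\phi$ by its local analogue, the affine function $x\mapsto s_\infty-x$, which is harmonic for $\partial_{xx}$ and vanishes at the boundary point $s_\infty$. Throughout I take existence, uniqueness and the classical smoothness of $(u,s)$ up to the free boundary (in particular $u(s(t),t)=0$ and the Gaussian-type decay of $u$ and $\partial_x u$ as $x\to-\infty$) as known. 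The first step is the balance law $\dot s=-\dot M$, with $M(t)=\int_{\mathbb{R}}u(\cdot,t)$: differentiating under the integral, the boundary term is killed by $u(s(t),t)=0$, so $\dot M(t)=\int_{-\infty}^{s(t)}\partial_{xx}u=\partial_x u(s(t),t)$, and the Stefan condition $\dot s=-\partial_x u(s(t),t)$ gives $\dot s=-\dot M$, hence $s(t)=s_0+M(0)-M(t)\le s_0+M(0)=:a$. The function $u$, extended by zero for $x\ge s(t)$, is a subsolution of the heat equation on the fixed half-line $(-\infty,a)$, since the gradient jump $[\partial_x u]=\dot s\ge0$ has the good sign; comparison with the solution $v$ of the fixed-boundary problem on $(-\infty,a)$ with datum $u_0$ (a translate of the $\mathbb{R}_-$ problem) gives $u\le v$, and item~(i) follows from the quoted decay estimates for $v$, which require exactly the finite first moment assumed. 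Letting $t\to\infty$ in the balance law and using $M(t)\to0$ yields $s_\infty=s_0+M(0)=s_0+\int_{\mathbb{R}}u_0$ and $s_\infty-s(t)=M(t)=O(t^{-1/2})$, which is item~(ii).

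For item~(iii) I would introduce the weighted moment $P(t)=\int_{-\infty}^{s(t)}(s_\infty-x)u(x,t)\,\mathrm{d}x$, finite by comparison with $v$. A computation paralleling the one for $M_\phi^+$ in Proposition~\ref{prop:1D-1FB.convergence} --- differentiate, drop the boundary term using $u(s(t),t)=0$, integrate by parts twice against the affine weight, and insert the Stefan condition --- gives $\dot P(t)=(s_\infty-s(t))\partial_x u(s(t),t)=-(s_\infty-s(t))\dot s(t)=M(t)\dot M(t)\le0$. Since $P\ge0$ is nonincreasing it converges to a finite limit $M^*\ge0$, the asserted first moment; integrating one even gets the explicit value $M^*=P(0)-\tfrac12 M(0)^2$.

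Item~(iv) is obtained by the sandwich argument of Proposition~\ref{prop:1D-1FB.convergence}. Fix $t_0>0$ and let $F(\cdot,\cdot;t_0)$ and $G(\cdot,\cdot;t_0)$ be the solutions of the fixed-boundary heat equation with datum $u(\cdot,t_0)$ and boundary at $s(t_0)$ and $s_\infty$ respectively; comparison gives $F\le u\le G$ for $t>t_0$. The quoted half-line profile applies to both, with the relevant first moments $\int(s(t_0)-x)u(\cdot,t_0)\,\mathrm{d}x$ and $P(t_0)$ taken relative to their own boundaries, and both of these tend to $M^*$ as $t_0\to\infty$. Sending first $t\to\infty$ with $t_0$ fixed, and then $t_0\to\infty$, squeezes $u$ between two profiles that collapse onto $M^*(s_\infty-x)\,t^{-1}(4\pi t)^{-1/2}\mathrm{e}^{-(s_\infty-x)^2/(4t)}$, which is exactly the claimed limit (note that this matches, after the substitution $\phi(x-s_\infty)=s_\infty-x$, the normalization $2M^*\,\phi/(s_\infty-x)\,\mathcal{D}_\a$ appearing in~\eqref{eq:convergence.v_a} with $\a=1$).

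The main obstacle is precisely this last double limit in the weighted norm $t^{3/2}/(|x|+1)$. The lower barrier $F$ is centred at $s(t_0)\neq s_\infty$, so one must estimate the discrepancy between the dipole centred at $s(t_0)$ and the one centred at $s_\infty$: by the mean value theorem this difference is controlled by $|s_\infty-s(t_0)|=M(t_0)=O(t_0^{-1/2})$ times a spatial derivative of the Gaussian whose weighted norm is bounded uniformly in $t$, hence it is $O(t_0^{-1/2})$ and vanishes as $t_0\to\infty$. Together with $|P(t_0)-M^*|\to0$ and $|\int(s(t_0)-x)u(\cdot,t_0)\,\mathrm{d}x-M^*|\to0$, this closes the argument. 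The delicate point is bookkeeping: one must keep all these errors uniform in $x<S$ and order the two limits so that the $t\to\infty$ convergence of $F$ and $G$, valid only for each fixed $t_0$, can be combined with the $t_0\to\infty$ collapse of the two barriers onto the single profile.
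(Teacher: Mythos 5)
Your proposal is correct and is essentially the paper's own argument: the paper justifies this theorem only by remarking that the ideas of the proofs of Propositions~\ref{prop:conv.free.boundary.1D-1FB} and~\ref{prop:1D-1FB.convergence} carry over verbatim once the nonlocal half-line asymptotics~\eqref{eq:convergence.v_a} are replaced by the local ones of~\cite{ceqw2}, which is exactly the transcription you perform, with the weight $\phi$ replaced by its local analogue $s_\infty-x$. If anything, your write-up is more detailed than what the paper records: the recentering estimate for the dipole in the double limit, and the explicit identity $\dot P=M\dot M$ yielding $M^*=\int_{\mathbb{R}}(s_\infty-x)u_0(x)\,\mathrm{d}x-\tfrac12\bigl(\int_{\mathbb{R}}u_0\bigr)^2$, are correct extras that the local structure affords (the corresponding nonlocal quantity $\dot M_\phi^+$ is not a perfect derivative, so no such formula exists there).
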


\section{The problem on the line with compactly supported habitat} 
\label{sect:1D-CS} \setcounter{equation}{0}

We continue our study of the one dimensional case started in the previous section, but now with initial datum of compact support. In this case $\Omega_t=(s^-(t),s^+(t))$ for all $t\ge0$, $\Omega=\{(x,t)\in \mathbb{R}\times\mathbb{R}_+:x\in\Omega_t, t>0\}$ and, again, $M(t)=\int_{\mathbb{R}} u(\cdot,t)$. 

\medskip

\noindent\textsc{Problem (1D-CS)}: Given $\Omega_0=(s^-_0,s^+_0)$ nonempty and bounded, and  $u_0\in C(\mathbb{R})$ nonnegative such that $u_0=0$ in $\mathbb{R}\setminus \Omega_0$,  find  a  nonnegative function $u\in C(\mathbb{R}\times\overline{\mathbb{R}_+})$ and functions  $s^\pm\in C^1(\overline{\mathbb{R}_+})$, $s^-$ nonincreasing and $s^+$ nondecreasing, satisfying
\begin{equation}\label{eq:problem.1D-CS}
\left\{
\begin{array}{l}
\partial_t u-\L u=0\ \text{in }\Omega,\quad u=0 \ \text{in }(\mathbb{R}\times\mathbb{R}_+)\setminus\Omega,\quad
u(\cdot,0)=u_0,\\[8pt]
\dot s^-(t)=-\int_{-\infty}^{s^-(t)}\mathcal{A}_J u(\cdot,t),\  
\dot s^+(t)=\int_{s^+(t)}^{\infty}\mathcal{A}_J u(\cdot,t)\ \text{for } t>0,\ s^\pm(0)=s^\pm_0.
\end{array}
\right.
\end{equation}

\subsection{Existence and uniqueness} Let $(u,s^-,s^+)$ be a solution to Problem~(1D-CS). Then
\begin{equation}
\label{eq:1D-CS.integral.version}
\begin{cases}
\displaystyle s^-(t)=s_0^--\int_0^t\int_{-\infty}^{s^-(r)}\mathcal{A}_J u(x,r)\,\textrm{d}x\textrm{d}r,&t>0,\\	
\displaystyle s^+(t)=s_0^++\int_0^t\int_{s^+(r)}^\infty\mathcal{A}_J u(x,r)\,\textrm{d}x\textrm{d}r,&t>0,\\
\displaystyle u(x,t)=\textrm{e}^{-t}u_0(x)+ \int_{\tau(x)}^t \textrm{e}^{-(t-r)}\mathcal{A}_J u(x,r)\,\textrm{d}r,&\displaystyle x\in (s^-_\infty,s^+_\infty),\; t>\tau(x),\\[8pt]
\displaystyle u(x,t)=0,&\displaystyle x\in\mathbb{R}\setminus (s^-(t),s^+(t)),\;t>0,
\end{cases}
\end{equation}
where $s^\pm_\infty=\lim_{t\to\infty}s^\pm(t)$ and
$$
\tau(x)=\begin{cases}
0& \text{for }x\in[s^-_0,s^+_0],\\[8pt]
\sup\{t\ge0:s^-(t)=x\}\ &\text{for }x\in (s^-_\infty,s^-_0),\\[8pt]
\sup\{t\ge0:s^+(t)=x\}\ &\text{for }x\in (s^+_0,s^+_\infty).
\end{cases}
$$
On the other hand, if $(u,s^-,s^+)\in C(\overline{\mathbb{R}_+};L^1(\R))\times C(\overline{\mathbb{R}_+})\times C(\overline{\mathbb{R}_+})$ solves~\eqref{eq:1D-CS.integral.version}, and $\dot s^-<0$, $\dot s^+>0$ in $\mathbb{R}_+$, then $\tau\in C((s^-_\infty,s^+_\infty))$, and thus $(u,s^-,s^+)$ is a solution to~Problem~(1D-CS). With this idea in mind, we first obtain a solution to~\eqref{eq:1D-CS.integral.version}.

\begin{lema}
	\label{lem:existence1D-CS.integral.equation}
	Given $\Omega_0=(s^-_0,s^+_0)$ nonempty and bounded, and  $u_0\in C(\mathbb{R})$ nonnegative such that $u_0=0$ in $\mathbb{R}\setminus \Omega_0$, there is a unique triple $(u,s^-,s^+)\in C(\overline{\mathbb{R}_+};L^1(\R))\times C(\overline{\mathbb{R}_+})\times C(\overline{\mathbb{R}_+})$ solving~\eqref{eq:1D-CS.integral.version}.
\end{lema}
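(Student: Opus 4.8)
The plan is to follow the scheme used for Lemma~\ref{lem:existence-line.integral.equation} almost verbatim, now with two moving fronts instead of one, reducing existence and uniqueness to a contraction mapping argument and then iterating in time. For $T>0$ I would work in the Banach space $\mathcal{B}_T=C([0,T];L^1(\R))\times C([0,T])\times C([0,T])$ with norm $\|(u,s^-,s^+)\|=\|u\|_{L^\infty(0,T;L^1(\R))}+\|s^-\|_{L^\infty(0,T)}+\|s^+\|_{L^\infty(0,T)}$, and on the closed subset $K_T$ of triples satisfying $u\ge0$, $u(x,t)=0$ for $x\notin(s^-(t),s^+(t))$, $\|u(\cdot,t)\|_{L^1(\R)}\le\|u_0\|_{L^1(\R)}$, $u(\cdot,0)=u_0$, $s^\pm(0)=s^\pm_0$, with $s^-$ nonincreasing and $s^+$ nondecreasing, I would define $\mathcal{T}(u,s^-,s^+)=(v,\xi^-,\xi^+)$ by the three integral formulas of \eqref{eq:1D-CS.integral.version}, using the two-sided analogue of \eqref{eq:def.tau.xi}: $\tau=0$ on $[s^-_0,s^+_0]$, $\tau(x)=\sup\{t\ge0:\xi^-(t)=x\}$ for $x\in(\xi^-(T),s^-_0)$, and $\tau(x)=\sup\{t\ge0:\xi^+(t)=x\}$ for $x\in(s^+_0,\xi^+(T))$. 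Since $s^-$ decreases and $s^+$ increases, the habitat only grows and never collapses, each exterior point is swept by exactly one boundary, and hence $\tau$ is well defined and $\mathcal{T}$ makes sense.

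First I would check that $\mathcal{T}(K_T)\subset K_T$. All conditions are immediate except the mass bound and the continuity of $v$ into $L^1(\R)$ with $v(\cdot,0)=u_0$; both follow from the very same computations as in Lemma~\ref{lem:existence-line.integral.equation}, using $\|\mathcal{A}_J u(\cdot,t)\|_{L^1(\R)}=\|u(\cdot,t)\|_{L^1(\R)}$ and Fubini, the only change being that the inner spatial integral now runs over $(\xi^-(r),\xi^+(r))$ rather than over $(-\infty,\xi(r))$.

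The heart of the argument is the contraction estimate. Given $(u_i,s^-_i,s^+_i)\in K_T$ with images $(v_i,\xi^-_i,\xi^+_i)$, I would bound $|\xi^+_1(t)-\xi^+_2(t)|$ and, symmetrically, $|\xi^-_1(t)-\xi^-_2(t)|$ exactly as the estimate on $|\xi_1(t)-\xi_2(t)|$ there, splitting each into a term controlled by $T\|u_1-u_2\|_{L^\infty(0,T;L^1(\R))}$ and a swept-strip term controlled by $T\|J\|_{L^\infty(\R)}\|u_0\|_{L^1(\R)}\|s^\pm_1-s^\pm_2\|_{L^\infty(0,T)}$. For $\|v_1-v_2\|_{L^1(\R)}$ I would decompose the difference into the three pieces $I_1,I_2,I_3$ as before, except that now the analogue of $I_2$ (difference of the lower limits $\tau$) and of $I_3$ (the swept strips) each split into a left and a right contribution. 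The monotonicity estimate $\int|\tau_{\xi^\pm_1}-\tau_{\xi^\pm_2}|\le\int_0^t|\xi^\pm_1(r)-\xi^\pm_2(r)|\,\mathrm{d}r$ still holds separately on each side, since $\tau$ is monotone in $x$ on each of $(\xi^-(T),s^-_0)$ and $(s^+_0,\xi^+(T))$. Collecting terms yields the analogue of \eqref{eq:contractivity.1D-1FB}, namely $\|\mathcal{T}(u_1,s^-_1,s^+_1)-\mathcal{T}(u_2,s^-_2,s^+_2)\|\le TL\,\|(u_1,s^-_1,s^+_1)-(u_2,s^-_2,s^+_2)\|$ for $T\le1$, with $L$ depending only on $\|J\|_{L^\infty(\R)}$ and $\|u_0\|_{L^1(\R)}$. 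Choosing $T<\min(1/L,1)$ gives a unique fixed point on $[0,T]$, and since $\|u(\cdot,t)\|_{L^1(\R)}$ is nonincreasing the same $T$ works at each step, so iterating yields existence and uniqueness on $\overline{\mathbb{R}_+}$.

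The only genuinely new bookkeeping relative to the one-boundary case is tracking two simultaneously moving fronts, so the main (though still essentially routine) obstacle I would expect is verifying that $\tau$ stays well defined and that the $I_2$ and $I_3$ terms split cleanly into a left part (governed by $\xi^-$) and a right part (governed by $\xi^+$), each estimated by the monotonicity argument; once that is granted, the contraction estimate is a direct transcription of the one already carried out for Problem~(1D-1FB).
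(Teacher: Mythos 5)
Your proposal is correct and takes essentially the same approach as the paper: the paper's own proof sets up the identical contraction scheme in $\mathcal{B}_T=C([0,T];L^1(\R))\times C([0,T])\times C([0,T])$ on the same set $K_T$, explicitly deferring the invariance, continuity and contraction estimates to ``minor modifications'' of the proof of Lemma~\ref{lem:existence-line.integral.equation}, and then iterates in time using that $\|u(\cdot,t)\|_{L^1(\R)}$ does not increase. Your write-up simply makes those modifications explicit (the left/right splitting of the $I_2$, $I_3$ terms and the one-sided monotonicity estimates for $\tau$), so it is a faithful, slightly more detailed version of the paper's argument.
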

\begin{proof} We start by proving local existence and uniqueness.
	
	Given $T>0$, the linear space $\mathcal{B}_T=C([0,T];L^1(\R))\times C([0,T])\times C([0,T])$ endowed with the norm
	$\|(u,s^-,s^+)\|=\|u\|_{L^\infty(0,T;L^1(\R))}+\|s^-
	\|_{L^\infty(0,T)}+\|s^+\|_{L^\infty(0,T)}$ is a Banach space. The set
	$$
	\begin{aligned}
	K_T=\{&(u,s^-,s^+)\in \mathcal{B}_T: u\ge0,\   u(x,t)=0\mbox{ if }x\in \mathbb{R}\setminus(s^-(t),s^+(t)),\\ 
	&\|u(\cdot,t)\|_{L^1(\R)}\le \|u_0\|_{L^1(\R)} \mbox{ for all }t>0,\ 
	u(\cdot,0)=u_0,\\ 
	&s^-\mbox{ nonincreasing},\ s^+\mbox{ nondecreasing},\ s^\pm(0)=s^\pm_0\}
	\end{aligned}
	$$
	is a closed subspace of $\mathcal{B}_T$. Given $(u,s^-,s^+)\in K_T$, we define $(v,\xi^-,\xi^+)=\mathcal{T}(u,s^-,s^+)$ by
	\begin{equation*}\label{T.CS}
	\begin{cases}
	\displaystyle \xi^-(t)=s_0^--\int_0^t\int_{-\infty}^{s^-(r)}\mathcal{A}_J u(x,r)\,\textrm{d}x\textrm{d}r,&0<t\le T,\\	
	\displaystyle \xi^+(t)=s_0^++\int_0^t\int_{s^+(r)}^\infty\mathcal{A}_J u(x,r)\,\textrm{d}x\textrm{d}r,&0<t\le T,\\
	\displaystyle v(x,t)=\textrm{e}^{-t}u_0(x)+ \int_{\tau_\xi(x)}^t \textrm{e}^{-(t-r)}\mathcal{A}_J u(x,r)\,\textrm{d}r,&\displaystyle x\in (\xi^-(T),\xi^+(T)),\ \tau_\xi(x)\le t\le T,\\[8pt]
	\displaystyle v(x,t)=0,&\displaystyle x\in\mathbb{R}\setminus(\xi^-(t),\xi^+(t)),\;0<t\le T,
	\end{cases}
	\end{equation*}
	where
	$$
	\tau_\xi(x)=\begin{cases}
	0& \text{for }x\in[s^-_0,s^+_0],\\[8pt]
	\sup\{t\ge0:\xi^-(t)=x\}\ &\text{for }x\in (\xi^-(T),s^-_0),\\[8pt]
	\sup\{t\ge0:\xi^+(t)=x\}\ &\text{for }x\in (s^+_0,\xi^+(T)).
	\end{cases}
	$$
The bound $\|v(\cdot,t)\|_{L^1(\R)}\le \|u_0\|_{L^1(\R)}$ and the continuity of $v$ from $[0,T]$ into $L^1(\R)$ with $v(\cdot,0)=u_0$ are obtained as in the proof of  Lemma~\ref{lem:existence-line.integral.equation}. Hence  $\mathcal{T}(K_T)\subset K_T$. Minor modifications of that proof also allow to show that $v(\cdot,t)$ is continuous in $L^1(\mathbb{R})$ for $t\in(0,T)$, and then that $\mathcal{T}$ is a contraction if $T$ is small enough, how small depending only on $\|u_0\|_{L^1(\mathbb{R})}$. As $\|u(\cdot,t)\|_{L^1(\R)}$ does not increase, by iterating the procedure we get existence and uniqueness of a fixed point for all $T>0$.	
\end{proof}

The same argument given in the proof of Proposition~\ref{thm:existence.classical.1D-1FB} shows that if $u_0$ is not trivial,  the solution $u$ of problem~\eqref{eq:1D-CS.integral.version} that we have just constructed is positive in $\Omega$. Hence $s^\pm$ are strictly monotone, and therefore $(u,s^-,s^+)$ is a solution to Problem (1D-CS). This solution is unique if we stay in the class of solutions that are continuous in $L^1(\mathbb{R})$. 
\begin{teo}Problem {\rm (1D--CS)} has a unique solution such that $u\in C(\overline{\mathbb{R}_+};L^1(\R))$.
\end{teo}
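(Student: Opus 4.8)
The plan is to follow verbatim the strategy used for Theorem~\ref{uniqueness-line}, exploiting the equivalence between Problem~(1D-CS) and its integral reformulation~\eqref{eq:1D-CS.integral.version}, which has already been set up in this subsection.

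First I would dispose of existence. If $u_0\not\equiv0$, Lemma~\ref{lem:existence1D-CS.integral.equation} provides a unique triple $(u,s^-,s^+)\in C(\overline{\mathbb{R}_+};L^1(\R))\times C(\overline{\mathbb{R}_+})\times C(\overline{\mathbb{R}_+})$ solving~\eqref{eq:1D-CS.integral.version}, and the positivity argument of Proposition~\ref{thm:existence.classical.1D-1FB}, applied as indicated in the paragraph preceding the statement, yields $u>0$ in $\Omega$, hence $\dot s^-<0$ and $\dot s^+>0$ in $\mathbb{R}_+$. This strict monotonicity makes $\tau$ continuous on $(s^-_\infty,s^+_\infty)$ and promotes the integral solution to a genuine solution of Problem~(1D-CS). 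When $u_0\equiv0$ one simply exhibits the trivial solution $u\equiv0$, $s^\pm\equiv s^\pm_0$.

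For uniqueness, the key observation is that any solution $(u,s^-,s^+)$ to Problem~(1D-CS) with $u\in C(\overline{\mathbb{R}_+};L^1(\R))$ necessarily satisfies~\eqref{eq:1D-CS.integral.version}: integrating the two Stefan conditions in~\eqref{eq:problem.1D-CS} in time gives the equations for $s^\pm$, while Duhamel's formula applied to $\partial_t u=\mathcal{L}u$ along time slices, together with $u=0$ outside $\Omega$ and the definition of $\tau$, gives the representation for $u$. Once this is established, uniqueness in the stated class is immediate from the contraction property of $\mathcal{T}$ proved in Lemma~\ref{lem:existence1D-CS.integral.equation}: on each short interval $[0,T]$ with $T$ small (depending only on $\|u_0\|_{L^1(\R)}$) the map $\mathcal{T}$ has a single fixed point, and iterating over consecutive intervals — using that $\|u(\cdot,t)\|_{L^1(\R)}$ does not increase, so the contraction constant stays uniform — forces any two solutions to coincide for all $t>0$.

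The only real subtlety, and hence the step I would treat most carefully, is the equivalence between the differential formulation~\eqref{eq:problem.1D-CS} and the integral one~\eqref{eq:1D-CS.integral.version} in the presence of two moving boundaries. The forward implication (a solution of the problem solves the integral equation) is unconditional, but the converse requires the strict monotonicity of \emph{both} $s^-$ and $s^+$ to guarantee that $\tau$ is continuous and that the Duhamel representation can be differentiated back to recover the evolution equation; this is exactly where the nontriviality of $u_0$ enters. With this equivalence in hand, the uniqueness statement reduces to the uniqueness already contained in Lemma~\ref{lem:existence1D-CS.integral.equation}, just as in the one-boundary case.
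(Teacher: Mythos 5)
Your proposal is correct and follows essentially the same route as the paper: the paper's (implicit) proof is exactly the combination of Lemma~\ref{lem:existence1D-CS.integral.equation}, the positivity argument of Proposition~\ref{thm:existence.classical.1D-1FB} giving strict monotonicity of $s^\pm$ (hence the passage from the integral formulation~\eqref{eq:1D-CS.integral.version} back to Problem~(1D-CS)), and uniqueness via the contraction property of $\mathcal{T}$, with the trivial solution covering $u_0\equiv0$. Your extra care about the equivalence of the two formulations matches the paper's own remarks preceding Lemma~\ref{lem:existence1D-CS.integral.equation}, so nothing is missing.
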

Comparison and regularity results analogous to Propositions~\ref{prop:comparison} and~\ref{prop:regularity} also hold.

\subsection{Asymptotic behaviour} We already know that the functions $s^\pm$ are strictly monotone. We now prove that the rate of growth of the habitable region coincides with the rate of decay of the population. As a consequence, $s^\pm$ are bounded.

\begin{prop} Let the triple $(u,s^-,s^+)$ be a solution to Problem~{\rm (1D-CS)}. Then $\dot M(t)=\dot s^-(t)-\dot s^+(t)$, and $s^\pm$ are bounded. 
\end{prop}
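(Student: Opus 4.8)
The plan is to follow the computation of the analogous single-boundary proposition, now tracking both endpoints at once. Write $M(t)=\int_{s^-(t)}^{s^+(t)}u(\cdot,t)$. Because $u(\cdot,t)$ is continuous and vanishes at $x=s^\pm(t)$, the endpoint terms produced by differentiating under the integral sign cancel, so $\dot M(t)=\int_{s^-(t)}^{s^+(t)}\partial_t u(\cdot,t)$. Substituting $\partial_t u=\mathcal{A}_Ju-u$ and using that $\mathrm{supp}\,u(\cdot,t)\subset(s^-(t),s^+(t))$ gives
\[
\dot M(t)=\int_{s^-(t)}^{s^+(t)}\int_{s^-(t)}^{s^+(t)}J(x-y)u(y,t)\,{\rm d}y{\rm d}x-\int_{s^-(t)}^{s^+(t)}u(y,t)\,{\rm d}y.
\]

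Next I would insert $u(y,t)=\int_{\R}J(x-y)u(y,t)\,{\rm d}x$ into the last integral and split $\R=(-\infty,s^-(t))\cup(s^-(t),s^+(t))\cup(s^+(t),\infty)$. By Fubini the middle piece coincides with the double integral above, so those two terms cancel, leaving
\[
\dot M(t)=-\int_{s^-(t)}^{s^+(t)}\int_{-\infty}^{s^-(t)}J(x-y)u(y,t)\,{\rm d}x{\rm d}y-\int_{s^-(t)}^{s^+(t)}\int_{s^+(t)}^{\infty}J(x-y)u(y,t)\,{\rm d}x{\rm d}y.
\]
Using the support of $u$ once more together with Fubini, the first term equals $-\int_{-\infty}^{s^-(t)}\mathcal{A}_Ju(\cdot,t)=\dot s^-(t)$ and the second equals $-\int_{s^+(t)}^{\infty}\mathcal{A}_Ju(\cdot,t)=-\dot s^+(t)$, which is precisely $\dot M(t)=\dot s^-(t)-\dot s^+(t)$.

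For boundedness, integrate this identity in time to obtain $M(t)-M(0)=(s^-(t)-s^-_0)-(s^+(t)-s^+_0)$, that is,
\[
(s^+(t)-s^+_0)+(s^-_0-s^-(t))=M(0)-M(t)\le M(0).
\]
Since $s^+$ is nondecreasing and $s^-$ nonincreasing, both summands on the left are nonnegative, so each is bounded above by $M(0)$; hence $s^+_0\le s^+(t)\le s^+_0+M(0)$ and $s^-_0-M(0)\le s^-(t)\le s^-_0$, proving that $s^\pm$ are bounded. The argument is essentially routine; the only points requiring care are the vanishing of the Leibniz boundary terms, which rests on continuity of $u$ up to the free boundary, and the bookkeeping of the two exterior regions, so I do not anticipate a genuine obstacle beyond organizing these cancellations cleanly.
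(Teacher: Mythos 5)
Your proof is correct and follows essentially the same route as the paper: the identity $\dot M=\dot s^--\dot s^+$ comes from the same mass-balance computation (the paper cancels via $\int_{\mathbb{R}}u=\int_{\mathbb{R}}\mathcal{A}_Ju$, you cancel by splitting the $x$-integration into the three regions, which is the same thing), and boundedness follows by integrating the identity and using monotonicity of $s^\pm$ together with $M\ge 0$. Your bookkeeping for the bounds, $s_0^+\le s^+(t)\le s_0^++M(0)$ and $s_0^--M(0)\le s^-(t)\le s_0^-$, is in fact slightly more direct than the paper's detour through $\ell(t)=s^+(t)-s^-(t)$ and its limit, but the substance is identical.
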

\begin{proof}
	A straightforward computation shows that 
	\[
	\begin{aligned}
	\dot M(t)&=\int_{s^-(t)}^{s^+(t)}\partial_tu(x,t)\,{\rm d}x=\int_{s^-(t)}^{s^+(t)}\mathcal{A}_J u(x,t)\,{\rm d}x-\int_\mathbb{R}u(y,t)\,{\rm d}y\\
	&=-\int_{-\infty}^{s^-(t)}\mathcal{A}_J u(x,t)\,{\rm d}x-\int_{s^+(t)}^\infty\mathcal{A}_Ju(x,t)\,{\rm d}x=\dot s^-(t)-\dot s^+(t).
	\end{aligned}
	\]
	Therefore, $
	\ell(t):=s^+(t)-s^-(t)=s_0^+-s_0^-+M(0)-M(t)\le  s_0^+-s_0^-+M(0)$. Hence, since $\ell$ increases, the limit  $\ell_\infty:=\lim_{t\to\infty} \ell(t)$ exists and is bounded.
	Moreover, for all $t>0$, 
	\[
	s_0^+-\ell_\infty\le s^+(t)-\ell(t)= s^-(t)< s^-_0 <s^+_0< s^+(t)= s^-(t)+\ell(t)\le  s_0^-+\ell_\infty.
	\]
\end{proof}
Comparison from above with the solution of the nonlocal heat equation in the limit domain with zero Dirichlet boundary data gives that the solution decays to 0 exponentially, from where the size of the limit interval follows.
\begin{prop}Let $(u,s^\pm)$ be a solution to Problem~{\rm (1D-CS)}. If $u_0\in L^\infty(\R)$, then $\|u(\cdot,t)\|_{L^\infty(\mathbb{R})}=O(\textrm{e}^{-\lambda t})$  for some $\lambda>0$, and  $
	s^+(t)-s^-(t)\to s_0^+-s_0^-+M(0)$ as $t\to\infty$.
\end{prop}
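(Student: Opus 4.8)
The plan is to bound $u$ from above by the solution of the linear nonlocal heat equation with zero exterior Dirichlet data on the \emph{fixed} limit interval, and to exploit that such solutions decay exponentially fast precisely because that interval is bounded. First I would set $D=(s^-_\infty,s^+_\infty)$, which is bounded by the previous proposition (indeed $|D|=\ell_\infty\le s^+_0-s^-_0+M(0)$), and note that, since $s^-$ is nonincreasing and $s^+$ nondecreasing, $\Omega_t=(s^-(t),s^+(t))\subset D$ for every $t\ge0$. Let $w$ be the solution of
\[
\partial_t w=\mathcal{L}w\ \text{in }D\times\R_+,\quad w=0\ \text{in }(\R\setminus D)\times\R_+,\quad w(\cdot,0)=u_0,
\]
which exists and is unique by a contraction argument as in Lemma~\ref{lem:existence1D-CS.integral.equation}, now with a fixed domain, and where $\mathcal{A}_Jw(x,t)=\int_D J(x-y)w(y,t)\,\textrm{d}y$. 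The goal of the first part is to show that $0\le u\le w$.

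I would obtain $w\ge0$ and $u\le w$ from the integral formulations. Writing $\psi=w-u$ on $D$ (both functions vanish off $D$ since $\Omega_t\subset D$), and using $u_0=0$ outside $[s^-_0,s^+_0]$, one checks from~\eqref{eq:1D-CS.integral.version} that
\[
\psi(x,t)=g(x,t)+\int_{\tau(x)}^t \textrm{e}^{-(t-r)}\mathcal{A}_J\psi(x,r)\,\textrm{d}r,\qquad x\in D,\ t>\tau(x),
\]
with $g(x,t)=\int_0^{\tau(x)}\textrm{e}^{-(t-r)}\mathcal{A}_Jw(x,r)\,\textrm{d}r\ge0$, while $\psi(x,t)=w(x,t)\ge0$ for $t\le\tau(x)$. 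Since $J\ge0$, the Volterra operator $\psi\mapsto\int_{\tau(x)}^t\textrm{e}^{-(t-r)}\mathcal{A}_J\psi\,\textrm{d}r$ is positivity preserving and a strict contraction on $C([0,T];L^1(D))$ for $T$ small, so its unique fixed point is given by a Neumann series with nonnegative terms; hence $\psi\ge0$ on $[0,T]$, and iterating gives $\psi\ge0$ for all $t$. \emph{This comparison between the moving-domain solution $u$ and the fixed-domain solution $w$ is the crux of the argument}: the delayed entry times $\tau(x)$ are what make it delicate, but they only contribute the nonnegative source $g$ and therefore help rather than obstruct.

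Next I would prove $\|w(\cdot,t)\|_{L^\infty}=O(\textrm{e}^{-\lambda t})$. The operator $\mathcal{A}_J$ is compact and self-adjoint on $L^2(D)$ (kernel $J(x-y)$ bounded and symmetric), so $\mu_1:=\max\operatorname{spec}\mathcal{A}_J$ is attained at an eigenfunction. Extending $f\in L^2(D)$ by zero and using $\int_\R J=1$ and the symmetry of $J$,
\[
\|f\|_{L^2}^2-\langle\mathcal{A}_Jf,f\rangle=\tfrac12\iint_{\R^2}J(x-y)\big(f(x)-f(y)\big)^2\,\textrm{d}x\,\textrm{d}y\ge0,
\]
and equality forces $f(x)=f(y)$ a.e.\ where $|x-y|<d$; since $f$ vanishes outside the bounded interval $D$, chaining overlapping balls yields $f\equiv0$. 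By compactness the maximizer is an eigenvalue, hence $\mu_1<1$ and $\lambda:=1-\mu_1>0$. Self-adjointness then gives $\|w(\cdot,t)\|_{L^2(D)}\le \textrm{e}^{-\lambda t}\|u_0\|_{L^2(D)}$. To upgrade to $L^\infty$ I would use the Duhamel identity $w(x,t)=\textrm{e}^{-t}u_0(x)+\int_0^t\textrm{e}^{-(t-r)}\mathcal{A}_Jw(x,r)\,\textrm{d}r$ together with $\|\mathcal{A}_Jw(\cdot,r)\|_{L^\infty}\le\|J\|_{L^\infty}\|w(\cdot,r)\|_{L^1(D)}\le\|J\|_{L^\infty}|D|^{1/2}\textrm{e}^{-\lambda r}\|u_0\|_{L^2}$; integrating and using $\lambda<1$ gives $\|w(\cdot,t)\|_{L^\infty}=O(\textrm{e}^{-\lambda t})$. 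The comparison $u\le w$ then yields $\|u(\cdot,t)\|_{L^\infty(\R)}=O(\textrm{e}^{-\lambda t})$.

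Finally, the assertion on the interval follows immediately. Since $u(\cdot,t)$ is supported in $\Omega_t\subset D$,
\[
0\le M(t)=\int_{\R}u(\cdot,t)\le |D|\,\|u(\cdot,t)\|_{L^\infty(\R)}\to0\quad\text{as }t\to\infty,
\]
and the previous proposition gives $s^+(t)-s^-(t)=s^+_0-s^-_0+M(0)-M(t)\to s^+_0-s^-_0+M(0)$, as claimed.
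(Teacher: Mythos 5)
Your proposal is correct and follows essentially the same route as the paper: bound $u$ from above by the solution of the nonlocal heat equation on the fixed limit interval $(s^-_\infty,s^+_\infty)$ with zero exterior Dirichlet data, obtain exponential decay of that supersolution from the principal Dirichlet eigenvalue of $\mathcal{L}$ on that bounded interval, and conclude via the identity $s^+(t)-s^-(t)=s_0^+-s_0^-+M(0)-M(t)$ from the preceding proposition. The only difference is one of self-containedness: you supply in-line proofs of the two ingredients the paper invokes without detail, namely the comparison between the free-boundary solution and the fixed-domain solution (your Neumann-series positivity argument handling the entry times $\tau(x)$) and the spectral gap $\mu_1<1$ with the $L^2$-to-$L^\infty$ bootstrap, which the paper attributes to the known theory of the Dirichlet eigenvalue problem for $\mathcal{L}$.
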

\begin{proof}
	Let $s^\pm_\infty=\lim_{t\to\infty}s^\pm(t)$, and let $\lambda$ be the first eigenvalue of the operator $\L$ with Dirichlet boundary conditions in the interval $(s^-_{\infty},s^+_{\infty})$. The solution $v$ of $\partial_t v-\L v=0$ in  $I=(s^-_{\infty},s^+_{\infty})$  with $v=0$ in $(\mathbb{R}\setminus I)\times\mathbb{R}_+$ and  $v(\cdot,0)=u_0$ verifies $u(x,t)\le v(x,t)\le C e^{-\lambda t}$
	for some constant $C$. Thus, $
	M(t)\le C e^{-\lambda t}(s_\infty^+-s_\infty-)\to0$ as $t\to\infty$ so that
	$$
	s^+(t)-s^-(t)= s_0^+-s_0^-+M_0-M(t)\to s_0^+-s_0^-+M_0\quad\text{as }t\to\infty.
	$$
\end{proof}
Let us remark that comparison from below with the solution of the nonlocal heat equation in intervals approaching the limit habitat shows that $e^{\mu t}\|u(\cdot,t)\|_{L^\infty(\mathbb{R})}\to\infty$ for all $\mu\in (0,\lambda)$. However, obtaining a sharp rate of decay is a difficult task. 

\section{The free boundary problem on the half-line}\label{sect-semirrecta} \setcounter{equation}{0}

We now consider the problem posed on the half-line, with a constant \lq\lq boundary'' data. The habitable region within $\mathbb{R}_+$ at time $t$ is assumed to have the form $\Omega_t=(0,s(t))$ for some $C^1$ function~$s$, which should be nondecreasing.

\medskip

\noindent\emph{Notation. } Along this section $M(t)= \int_0^{s(t)}u(\cdot,t)$,  $\Omega_t=\{x\in \mathbb{R}_+: x<s(t)\}$ for $t\ge0$, $\Omega=\{(x,t)\in \mathbb{R}_+^2: x\in\Omega_t, t>0\}$.  Notice that  for $x\in\mathbb{R}_+$ the definition of $\mathcal{A}_J u(x,t)$ does not involve the values of $u(x,t)$ for $x<-d$. Hence, while dealing with the problem on the half-line, we denote $\mathcal{A}_J u(x,t)=\int_{-d}^\infty J(x-y)u(y,t)\,{\rm d}y$.

\medskip

\noindent\textsc{Problem}~(HL): Given $s_0\ge0$, $u_0\in L^1_+(\R_+)\cap C(\mathbb{R}_+)$ such that $u_0(x)=0$ for $x>s_0$, and $A\ge 0$, find a nonnegative function $u\in C(\mathbb{R}_+\times\overline{\mathbb{R}_+})$  and a nondecreasing function   $s\in C^1(\overline{\mathbb{R}_+})$  satisfying
\begin{equation}\label{problem2}
\left\{
\begin{array}{l}
\partial_t u-\L u=0\ \text{in }\Omega, \quad
u=0\ \text{in }\mathbb{R}_+^2\setminus\Omega,\quad
u=A\ \text{in } (-d,0)\times\mathbb{R}_+,\\[8pt]
u(\cdot,0)=u_0\ \text{in }\mathbb{R}_+,\quad
\displaystyle \dot s(t)=\int_{s(t)}^\infty\mathcal{A}_Ju(\cdot,t)\ \text{for }t>0,\quad s(0)=s_0.
\end{array}
\right.
\end{equation}

\medskip

Let us remark that  even if  $u_0$ is continuous accross the origin, $u(\cdot, t)$ will have a jump there. That is the reason why we have only asked $u_0$ to be continuous in $\mathbb{R}_+$, since there is no gain in requiring more regularity. 

\subsection{Existence and uniqueness} The  integral version of the problem reads 
\begin{equation}
\label{eq:integral.version.half-line}
\left\{\begin{array}{l}
\displaystyle u(x,t)= \textrm{e}^{-t}u_0(x)+\int_{\tau(x)}^t \textrm{e}^{-(t-r)}\mathcal{A}_Ju(x,r)\,{\rm d}r\mbox{ if }x\in(0,s_\infty), \ t\ge\tau(x),\\[10pt]
u(x,t)=0\mbox{ if } x>s(t),\, t>0,\quad u(x,t)=A \mbox{ if } x\in(-d,0),\, t>0,\\[6pt]
\displaystyle s(t)=s_0+\int_0^t\int_{s(r)}^\infty\mathcal{A}_J u(x,r)\,\textrm{d}x\textrm{d}r\ \text{if }t >0,
\end{array}
\right.
\end{equation}
where $s_\infty=\lim_{t\to\infty}s(t)$ and
$$
\tau(x)=0 \ \text{for }0<x\le s_0,\quad \tau(x)=
\sup\{t\ge0:s(t)=x\}\ \text{for }x\in (s_0,s_\infty).
$$
This latter problem has a unique solution in a suitable functional space.
\begin{lema}
\label{lem:existence-line.integral.equation.half-line}
Given $s_0\ge0$, $u_0\in L^1_+(\R_+)\cap C(\mathbb{R}_+)$ such that $u_0(x)=0$ for $x>s_0$, and $A\ge 0$, there is a unique pair $(u,s)\in C(\overline{\mathbb{R}_+};L^1(-d,\infty))\times C(\overline{\mathbb{R}_+})$ solving~\eqref{eq:integral.version.half-line}.
\end{lema}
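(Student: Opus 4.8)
The plan is to reproduce, almost verbatim, the Banach fixed-point scheme of the proof of Lemma~\ref{lem:existence-line.integral.equation}, adapting it to the presence of the fixed boundary datum $u=A$ on $(-d,0)$. I would work in the space $\mathcal{B}_T=C([0,T];L^1(-d,\infty))\times C([0,T])$ with $\|(u,s)\|=\|u\|_{L^\infty(0,T;L^1(-d,\infty))}+\|s\|_{L^\infty(0,T)}$, and in the closed subset $K_T$ of pairs $(u,s)$ with $u\ge0$, $u\equiv A$ on $(-d,0)\times[0,T]$, $u(x,t)=0$ for $x>s(t)$, $u(\cdot,0)=u_0$, $s$ nondecreasing with $s(0)=s_0$, and a mass bound $\int_0^\infty u(\cdot,t)\,\mathrm{d}x\le R$ to be fixed below. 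The map $(v,\xi)=\mathcal{T}(u,s)$ is defined by the three right-hand sides in~\eqref{eq:integral.version.half-line}, retaining $v\equiv A$ on $(-d,0)$ and using the convention $\mathcal{A}_Ju(x,t)=\int_{-d}^\infty J(x-y)u(y,t)\,\mathrm{d}y$. A fixed point of $\mathcal{T}$ is exactly a solution of~\eqref{eq:integral.version.half-line}, so the whole problem is to show $\mathcal{T}(K_T)\subset K_T$ and that $\mathcal{T}$ is a contraction.

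The only genuinely new issue is the $L^1$ bound needed for the invariance $\mathcal{T}(K_T)\subset K_T$: unlike the problems of Sections~\ref{sect:1D-1FB} and~\ref{sect:1D-CS}, the mass is no longer nonincreasing, because the reservoir $u=A$ on $(-d,0)$ feeds individuals into $(0,\infty)$. I would control it straight from the Duhamel formula. Since $\int_0^{\xi(r)}J(x-y)\,\mathrm{d}x\le1$, one has $\int_0^{\xi(r)}\mathcal{A}_Ju(x,r)\,\mathrm{d}x\le\int_{-d}^\infty u(\cdot,r)=Ad+\int_0^\infty u(\cdot,r)\,\mathrm{d}x\le Ad+R$; integrating the formula for $v$ over $(0,\infty)$ and exchanging the order of integration exactly as in Lemma~\ref{lem:existence-line.integral.equation} gives
\[
\int_0^\infty v(\cdot,t)\,\mathrm{d}x\le \mathrm{e}^{-t}\|u_0\|_{L^1(\mathbb{R}_+)}+(1-\mathrm{e}^{-t})(Ad+R),\qquad 0\le t\le T.
\]
Choosing $R=\|u_0\|_{L^1(\mathbb{R}_+)}+(\mathrm{e}^{T}-1)Ad$ makes the right-hand side $\le R$ for every $t\in[0,T]$, so the mass bound is preserved. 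The remaining requirements (nonnegativity, support in $\{x\le s(t)\}$, continuity of $t\mapsto v(\cdot,t)$ into $L^1(-d,\infty)$ with $v(\cdot,0)=u_0$, monotonicity of $\xi$) are checked exactly as in Lemma~\ref{lem:existence-line.integral.equation}, the new piece $v\equiv A$ on $(-d,0)$ being trivially continuous in $t$.

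For the contraction estimate I would take $(u_i,s_i)\in K_T$, $i=1,2$, and observe that $u_1-u_2$ vanishes on $(-d,0)$, so $\mathcal{A}_J(u_1-u_2)(x,r)=\int_0^\infty J(x-y)(u_1-u_2)(y,r)\,\mathrm{d}y$ and $\|u_1-u_2\|_{L^1(-d,\infty)}=\|u_1-u_2\|_{L^1(0,\infty)}$. Thus the datum $A$ cancels in every difference, and the bounds for $|\xi_1-\xi_2|$ and for $\|v_1-v_2\|_{L^1(-d,\infty)}$ are word-for-word those in the proof of Lemma~\ref{lem:existence-line.integral.equation}, now using $\|\mathcal{A}_Jf\|_{L^\infty}\le\|J\|_{L^\infty}\|f\|_{L^1}$ together with the mass bound $R$ in place of $\|u_0\|_{L^1}$. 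This yields $\|\mathcal{T}(u_1,s_1)-\mathcal{T}(u_2,s_2)\|\le TL\|(u_1,s_1)-(u_2,s_2)\|$ with $L$ depending only on $\|J\|_{L^\infty}$ and $R$, hence a unique fixed point on $[0,T]$ whenever $T<\min(1/L,1)$.

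It remains to pass from local to global existence, and this is where the failure of the monotone mass bound must be handled with care; I regard it as the main obstacle. I would first establish the a priori estimate $M(t)\le M(0)+Ad\,t$ for any solution: computing $\dot M(t)=\int_0^{s(t)}(\mathcal{A}_Ju-u)$ and using $u(s(t),t)=0$, the only positive contribution is the inflow $A\int_0^{s(t)}\int_{-d}^0J(x-y)\,\mathrm{d}y\,\mathrm{d}x\le Ad$, while the remaining terms, $-\dot s\le0$ and the leftward flux $\int_0^{s(t)}u(y,t)(\Phi(y)-1)\,\mathrm{d}y\le0$ with $\Phi(y)=\int_0^{s(t)}J(x-y)\,\mathrm{d}x\le1$, are nonpositive. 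Now fix an arbitrary horizon $T^*>0$. Because the constructed solution satisfies this estimate, the mass at the start of every subinterval $[t_k,t_k+T_0]\subset[0,T^*]$ is bounded by $M(0)+Ad\,T^*$; consequently the invariance radius $R$ and the contraction constant $L$ from the local step are uniformly bounded over $[0,T^*]$, so a single step length $T_0<\min(1/L,1)$ works throughout and finitely many iterations cover $[0,T^*]$. As $T^*$ is arbitrary, this produces the unique global solution in $C(\overline{\mathbb{R}_+};L^1(-d,\infty))\times C(\overline{\mathbb{R}_+})$. The crux of the whole argument is precisely this linear-in-time control of the mass injected through the Dirichlet reservoir, which replaces the conservation of $\|u(\cdot,t)\|_{L^1}$ available in the previous sections.
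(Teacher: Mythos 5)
Your proposal is correct and follows essentially the same route as the paper: a Banach fixed-point argument in $C([0,T];L^1(-d,\infty))\times C([0,T])$ on a set $K_T$ with an enlarged mass bound that absorbs the inflow from the reservoir $u=A$ on $(-d,0)$ (the paper takes $\|u_0\|_{L^1(\mathbb{R}_+)}+2Ad$ with $T\le\ln 2$, you take $\|u_0\|_{L^1(\mathbb{R}_+)}+(\mathrm{e}^{T}-1)Ad$), a contraction estimate in which the datum $A$ cancels in differences, and globality from the a priori estimate $\dot M(t)\le Ad$, which is exactly the computation in the paper. The only cosmetic difference is that the paper concludes by contradiction with a finite maximal existence time $T_0$, whereas you cover an arbitrary horizon $[0,T^*]$ by finitely many steps of uniform length; the underlying continuation idea is identical.
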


\begin{proof} The proof follows the lines of the one of Proposition~\ref{lem:existence-line.integral.equation} for solutions of Problem~(1D-1FB). Hence we only sketch it.
		
For $(u,s)\in K_T$, let $\xi$ and $\tau_\xi$ be defined, respectively, as in~\eqref{T} and~\eqref{eq:def.tau.xi}. Here
\[\begin{aligned}
K_T=
\{(u,s)\in &C([0,T];L^1(-d,\infty))\times C([0,T]): u\ge 0,\ u(x,t)=0\mbox{ in }x>s(t),\\& \|u(\cdot, t)\|_{L^1(\mathbb{R}_+)}\le \|u_0\|_{L^1(\mathbb{R}_+)}+2Ad\mbox{ and }u(x,t)=A\mbox{ in }(-d,0)\ \forall t>0,\\ &\qquad s(0)=s_0,\ s\mbox{ monotone increasing}\}.
\end{aligned}\]
Now, we let
 \begin{equation}\label{T-halfline}
   \begin{cases}
       v(x,t)= \textrm{e}^{-t}u_0(x)+\int_{\tau_\xi(x)}^t \textrm{e}^{-(t-r)}\mathcal{A}_Ju(x,r)\,{\rm d}r&\mbox{if }0<x<\xi(T), \ t\ge\tau_\xi(x),\\
   v(x,t)=0&\mbox{if } x>\xi(t),\\
   v(x,t)=A&\mbox{if } -d<x<0,
      \end{cases}
  \end{equation}
  and we define $\T(u,s):=(v,\xi)$. Then, $\T:K_T\to K_T$ if $T\le \ln 2$. 
  Moreover, $K_T$ is closed in $C([0,T);L^1(-d,\infty))\times C([0,T])$, and $\T$ is a strict contraction in $K_T$ if $T$ is small enough depending only on $\|u_0\|_{L^1(\mathbb{R}_+)}$ and $A$. Therefore, $\T$ has a unique fixed point in $K_T$ and there exists a unique  solution in some maximal time interval $[0,T_0)$.

  Let us see that the maximal solution is global. In fact, assume that $T_0<\infty$. Then, for $t<T_0$,
  \[\begin{aligned}
  \dot M(t)=&\int_0^{s(t)}\partial_t u(x,t)\,{\rm d}x=\int_{-d}^{s(t)}\Big(\int_0^{s(t)}J(x-y)\,{\rm d}x\Big)u(y,t)\,{\rm d}y-\int_0^{s(t)} u(y,t)\,{\rm d}y\\
  =&Ad -A\int_{-d}^0\Big(\int_{-\infty}^0J(x-y)\,dx\Big)\,{\rm d}y-A\int_{-d}^0\Big(\int_{s(t)}^\infty J(x-y)\,{\rm d}x\Big)\,{\rm d}y \\
  &  -\int_0^{s(t)}\Big(\int_{-\infty}^0J(x-y)\,dx\Big)u(y,t)\,{\rm d}y-\int_0^{s(t)}\Big(\int_{s(t)}^\infty J(x-y)\,{\rm d}x\Big)u(y,t)\,{\rm d}y\\
    \le& Ad.
  \end{aligned}
  \]
Hence, $M(t)\le \|u_0\|_{L^1(\mathbb{R}_+)}+AdT_0$ for every  $0<t<T_0$ and therefore, the maximal  solution is defined in $[0,T_0+\delta)$ for some $\delta >0$. This contradicts the definition of $T_0$. Therefore, the solution is global in time.
\end{proof}
Arguing as in Proposition~\ref{thm:existence.classical.1D-1FB} it is easy to see that $u>0$ in $\Omega$ if $u_0\not\equiv0$, and hence that $\dot s>0$. Hence the pair $(u,s)$ given by Lemma~\ref{lem:existence-line.integral.equation.half-line} is a solution to Problem~(HL). This solution is the unique one if we restrict ourselves to functions $u$ that are continuous in $L^1$. 
\begin{teo}\label{existence-halfline} 
	Problem~{\rm (HL)} has a unique solution such that $u\in C(\overline{\mathbb{R}_+};L^1((-d,\infty)))$.	
\end{teo}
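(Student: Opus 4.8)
The plan is to transfer everything to the integral formulation~\eqref{eq:integral.version.half-line} and then read off uniqueness from Lemma~\ref{lem:existence-line.integral.equation.half-line}. Existence has essentially been settled in the discussion preceding the statement: the pair $(u,s)$ produced by that lemma solves Problem~(HL) as soon as $\dot s>0$, which guarantees that $\tau$ is continuous and that the integral identities in~\eqref{eq:integral.version.half-line} are equivalent to the differential formulation in~\eqref{problem2}. To obtain $\dot s>0$ I would argue as in Proposition~\ref{thm:existence.classical.1D-1FB}: if $u(x_0,t_0)=0$ at some interior point of $\Omega$, the nonnegativity of the two terms in the Duhamel formula forces $\mathcal{A}_J u(x_0,r)=0$, hence $u(\cdot,r)\equiv 0$ on $(x_0-d,x_0+d)$ for $r\in(\tau(x_0),t_0)$. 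Propagating this outwards one reaches either a point of the support of $u_0$ or, for $x_0<d$, the strip $(-d,0)$ where $u=A$; in either case, provided $u_0\not\equiv0$ or $A>0$, one gets a contradiction, so $u>0$ in $\Omega$ and therefore $\dot s>0$. In the remaining degenerate case $u_0\equiv0$, $A=0$, the trivial pair $u\equiv0$, $s\equiv s_0$ is a solution.

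For uniqueness, I would start from an arbitrary solution $(u,s)$ of Problem~(HL) with $u\in C(\overline{\mathbb{R}_+};L^1((-d,\infty)))$ and check that it necessarily satisfies~\eqref{eq:integral.version.half-line}. Writing $\partial_t(\textrm{e}^{t}u)=\textrm{e}^{t}\mathcal{A}_Ju$ on $\Omega$ and integrating from $\tau(x)$ to $t$ yields the Duhamel identity, once one notes that $u(x,\tau(x))=u_0(x)$ for $0<x\le s_0$ and $u(x,\tau(x))=0$ for $s_0<x<s_\infty$ (the free boundary reaches $x$ precisely at $\tau(x)$, so by continuity and the vanishing of $u$ for $x>s(t)$ one has $u(x,\tau(x))=\lim_{t\to\tau(x)^-}u(x,t)=0$); the equation for $s$ is just the integrated Stefan condition. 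Since such a pair belongs to the class in which Lemma~\ref{lem:existence-line.integral.equation.half-line} asserts uniqueness, any two solutions of Problem~(HL) in the stated class must coincide. Alternatively, one can invoke directly the contraction property established in the proof of Lemma~\ref{lem:existence-line.integral.equation.half-line}, exactly as was done for Problem~(1D-1FB) in Theorem~\ref{uniqueness-line}.

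The step I expect to require the most care is the reduction of an arbitrary solution to the integral equation, and in particular the bookkeeping near the boundary strip $(-d,0)$. Because of the constant datum $A$ there, the mass is no longer monotone (one only controls $\dot M\le Ad$), the convolution $\mathcal{A}_J$ must be read as $\int_{-d}^\infty J(x-\cdot)u$, and the value $u(x,\tau(x))$ for $x\in(s_0,s_\infty)$ has to be justified from the continuity of $u$ up to the free boundary even when $s$ is only known to be nondecreasing (allowing for possible plateaus, where $\tau(x)$ is taken as the supremum of the corresponding level set). Once this identification is in place, the conclusion follows verbatim from Lemma~\ref{lem:existence-line.integral.equation.half-line}.
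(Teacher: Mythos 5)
Your proposal is correct and follows essentially the same route as the paper: existence comes from the fixed point of Lemma~\ref{lem:existence-line.integral.equation.half-line} upgraded to a solution of Problem~(HL) via the positivity argument of Proposition~\ref{thm:existence.classical.1D-1FB} (giving $\dot s>0$ and continuity of $\tau$), and uniqueness comes from identifying any solution in the stated class with a solution of the integral system~\eqref{eq:integral.version.half-line} and invoking the contraction, exactly as in Theorem~\ref{uniqueness-line}. Your treatment is in fact slightly more scrupulous than the paper's one-paragraph argument, since you also handle the cases $u_0\equiv0$ with $A>0$ (positivity propagated from the strip $(-d,0)$, which the paper needs later for Lemma~\ref{lem:convergencia.datos.triviales}) and the degenerate case $u_0\equiv0$, $A=0$, as well as the Duhamel reduction and the non-monotone mass bookkeeping.
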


\subsection{Comparison and regularity} A comparison principle analogous to Proposition~\ref{prop:comparison} holds. Moreover, the free boundary is $C^\infty$ smooth and $u$ is in $C^\infty(\{s_0\le x\le s(t),\ t\ge0\})$ and as smooth as $u_0$ in the set $\{0\le x\le s_0,\ t\ge0\}$. Since the proofs are similar to the ones we gave in Section~\ref{sect:1D-1FB},  we omit them.

\normalcolor

If the initial datum is bounded, the maximum of the solution is attained at the parabolic boundary.
\begin{prop} 
	\label{prop:weak.maximum.principle}
	Let $(u,s)$ be the solution to problem {\rm (HL)}. 
	If $u_0\in L^\infty(\R_+)$, then
	$$
	\|u(\cdot, t)\|_{L^\infty((-d,\infty))}\le \max\{\|u_0\|_{L^\infty(\mathbb{R}_+)},A\}\quad\mbox{for every } t>0,
	$$
\end{prop}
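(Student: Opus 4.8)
The plan is to prove that the nonnegative function $w(x,t):=(u(x,t)-K)_+$, with $K:=\max\{\|u_0\|_{L^\infty(\mathbb{R}_+)},A\}$, vanishes identically on $(-d,\infty)\times\overline{\mathbb{R}_+}$. The starting point is the integral representation~\eqref{eq:integral.version.half-line}, together with an elementary but crucial normalization: since $J$ is supported in $(-d,d)$, for every $x\ge0$ the ball $(x-d,x+d)$ is contained in $(-d,\infty)$, so that $\int_{-d}^\infty J(x-y)\,{\rm d}y=\int_{\mathbb{R}}J=1$. Consequently, for $x\ge0$ one has $\mathcal{A}_Ju(x,r)-K=\int_{-d}^\infty J(x-y)\,(u(y,r)-K)\,{\rm d}y\le \int_{-d}^\infty J(x-y)\,w(y,r)\,{\rm d}y=:\mathcal{A}_Jw(x,r)$.

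First I would rewrite the constant $K$ using the same temporal kernel as the representation formula, namely $K=\textrm{e}^{-t}K+\int_0^t\textrm{e}^{-(t-r)}K\,{\rm d}r$, the point being that the weights integrate exactly to $1$. Bounding $u_0\le K$, dropping the nonnegative portion of the time integral below $\tau(x)$, and subtracting this identity from~\eqref{eq:integral.version.half-line} gives, for every $x\ge0$ and $t>0$,
\[
u(x,t)-K\le \int_0^t\textrm{e}^{-(t-r)}\big(\mathcal{A}_Ju(x,r)-K\big)\,{\rm d}r\le \int_0^t\textrm{e}^{-(t-r)}\mathcal{A}_Jw(x,r)\,{\rm d}r.
\]
Since the right-hand side is nonnegative, the same inequality holds with $u(x,t)-K$ replaced by $w(x,t)$ (and it holds trivially where $u=0$). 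For $x\in(-d,0)$ we have $u=A\le K$ and for $x\ge s(t)$ we have $u=0$, so $w(\cdot,t)$ is supported in $[0,s(t)]$ and vanishes elsewhere on $(-d,\infty)$.

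Setting $W(t):=\sup_{x\in(-d,\infty)}w(x,t)=\max_{0\le x\le s(t)}w(x,t)$, which is finite and continuous in $t$ because $u$ is continuous on the compact set $\{0\le x\le s(t)\}$ and $s\in C(\overline{\mathbb{R}_+})$, I would estimate $\mathcal{A}_Jw(x,r)\le W(r)\int_{-d}^\infty J(x-y)\,{\rm d}y\le W(r)$ and take the supremum in $x$ to reach the homogeneous integral inequality $W(t)\le\int_0^t\textrm{e}^{-(t-r)}W(r)\,{\rm d}r\le\int_0^tW(r)\,{\rm d}r$. Gronwall's lemma, with $W\ge0$ continuous, then forces $W\equiv0$, that is, $u\le K$ on $(-d,\infty)\times\overline{\mathbb{R}_+}$.

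The main point — and the only place where the geometry of the half-line enters — is the exact cancellation of the two \lq\lq mass-one\rq\rq\ identities $\int_{-d}^\infty J(x-y)\,{\rm d}y=1$ for $x\ge0$ and $\textrm{e}^{-t}+\int_0^t\textrm{e}^{-(t-r)}\,{\rm d}r=1$. Together they remove any source term and turn the estimate for $w$ into a homogeneous Gronwall inequality; without the spatial normalization (which fails for $x<0$, but there $u\equiv A\le K$ anyway) one would only obtain exponential growth and miss the sharp bound. The remaining technical point to verify is that $W$ is regular enough in $t$ for Gronwall to apply, which is where one invokes the joint continuity of $u$ and the continuity of the free boundary $s$ furnished by the construction.
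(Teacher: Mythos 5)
Your proof is correct, but it closes the argument by a genuinely different mechanism than the paper. Both proofs start from the integral identity~\eqref{eq:integral.version.half-line} and exploit the same two normalizations (for $x\ge0$ the kernel $J(x-\cdot)$ has unit mass on $(-d,\infty)$, and $\textrm{e}^{-t}+\int_0^t \textrm{e}^{-(t-r)}\,{\rm d}r=1$). The paper, however, never subtracts the constant $K$: it sets $g(t)=\|u\|_{L^\infty((-d,\infty)\times(0,t))}$, bounds $\mathcal{A}_Ju\le g(t)$ directly, obtains $\|u\|_{L^\infty(\mathbb{R}_+\times(0,t))}\le \textrm{e}^{-t}\|u_0\|_{L^\infty(\mathbb{R}_+)}+(1-\textrm{e}^{-t})g(t)$, and then argues by cases: if $g(t)>A$, the supremum over $(-d,\infty)$ must come from the $\mathbb{R}_+$ part (since $u\equiv A$ on $(-d,0)$), so $g(t)$ itself obeys the inequality and can be absorbed thanks to the strict inequality $1-\textrm{e}^{-t}<1$, yielding $g(t)\le\|u_0\|_{L^\infty(\mathbb{R}_+)}$. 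Your route --- working with the excess $w=(u-K)_+$, which vanishes on $(-d,0)$ precisely because $K\ge A$, and closing with a homogeneous Gronwall inequality --- avoids the case distinction and handles the Dirichlet value and the initial datum in a single uniform step; it is also more robust (it would tolerate, say, a Lipschitz lower-order perturbation of the equation, where the paper's one-shot absorption would not immediately apply). One caveat: the continuity in $t$ of $W(t)=\sup_x w(x,t)$ that you invoke is more than you can cheaply get --- the paper itself warns that $u(\cdot,t)$ in general jumps at $x=0$, and with only $u_0\in L^\infty(\mathbb{R}_+)\cap C(\mathbb{R}_+)$ the supremum over $0<x\le s(t)$ need not be attained nor depend continuously on $t$. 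This is harmless, though: $W$ is measurable (a countable supremum over rational $x$ of the continuous functions $w(x,\cdot)$) and locally bounded (from the integral equation, $u\le\|u_0\|_{L^\infty(\mathbb{R}_+)}+\|J\|_{L^\infty(\mathbb{R})}\sup_{r\le t}\|u(\cdot,r)\|_{L^1((-d,\infty))}$, which is finite since $u\in C(\overline{\mathbb{R}_+};L^1((-d,\infty)))$), and iterating $W(t)\le\int_0^t W(r)\,{\rm d}r$ gives $W(t)\le \sup_{[0,T]}W\cdot t^n/n!\to0$, which is all that is needed.
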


\begin{proof}
This follows from the integral version of the equation. Indeed, 
$$
u(x,t)\le \text{e}^{-t} u_0(x)+\int_0^t\text{e}^{-(t-s)}\mathcal{A}_J u(x,r)\,{\rm d}r \quad \text{for }x\in \mathbb{R}_+^2,
$$
and hence, denoting $g(t):=\|u(\cdot, t)\|_{L^\infty((-d,\infty)\times(0,t))}$, we have
$$
\|u\|_{L^\infty(\mathbb{R}_+\times(0,t))}\le
\text{e}^{-t} \|u_0\|_{L^\infty(\mathbb{R}_+)}+(1-\text{e}^{-t})g(t)\quad\text{for all }t>0,
$$
Thus, if $g(t)>A$, then $\|u(\cdot,t)\|_{L^\infty(\mathbb{R}_+\times(0,t))}=g(t)$, and from the previous estimate we get $\|u\|_{L^\infty(\mathbb{R}_+\times(0,t))}\le \|u_0\|_{L^\infty(\mathbb{R}_+)}$.
\end{proof}

\subsection{Asymptotic behaviour} The function $s$ giving the position of the free boundary is bounded when $A=0$.
\begin{prop} 
\label{prop:localization}
Let $(u,s)$ be a solution to problem~{\rm (HL)}. If $A=0$, then $s_\infty<\infty$.
\end{prop}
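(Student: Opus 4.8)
The plan is to show that $s$ is bounded; since $s$ is nondecreasing by the very formulation of Problem~(HL), boundedness immediately forces the limit $s_\infty$ to be finite. The underlying idea, already present in the modeling, is that the nonlocal flux that pushes the free boundary to the right is controlled by the \emph{total} rate at which mass is lost, and that when $A=0$ there is no inflow to replenish the population, so its mass can only decrease. Concretely, I would establish the pointwise inequality $\dot s(t)\le-\dot M(t)$ and integrate it.

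To get this inequality I would compute $\dot M$. Since $u$ is continuous with $u(s(t),t)=0$, differentiating $M(t)=\int_0^{s(t)}u(\cdot,t)$ the moving-boundary term drops and $\dot M(t)=\int_0^{s(t)}\partial_t u=\int_0^{s(t)}\mathcal{L}u$. When $A=0$ the boundary datum forces $u=0$ on $(-d,0)$, and $u=0$ for $x>s(t)$, so in the relevant range $\mathcal{A}_J u(x,t)=\int_0^{s(t)}J(x-y)u(y,t)\,{\rm d}y$. Applying Fubini to $\int_0^{s(t)}\mathcal{A}_J u$ and using $\int_{\mathbb{R}}J=1$ to rewrite $\int_0^{s(t)}u$, the two terms combine into
$$
\dot M(t)=-\int_0^{s(t)}\Big(\int_{-\infty}^0 J(x-y)\,{\rm d}x+\int_{s(t)}^\infty J(x-y)\,{\rm d}x\Big)u(y,t)\,{\rm d}y.
$$
The second inner integral reproduces exactly the Stefan flux: by Fubini, $\dot s(t)=\int_{s(t)}^\infty\mathcal{A}_J u=\int_0^{s(t)}\big(\int_{s(t)}^\infty J(x-y)\,{\rm d}x\big)u(y,t)\,{\rm d}y$. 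Hence
$$
-\dot M(t)=\dot s(t)+\int_0^{s(t)}\Big(\int_{-\infty}^0 J(x-y)\,{\rm d}x\Big)u(y,t)\,{\rm d}y\ge\dot s(t),
$$
the extra term (the flux of individuals that die when crossing $x=0$) being nonnegative since $J,u\ge0$.

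Integrating $\dot s\le-\dot M$ on $(0,t)$ and using $M(t)\ge0$ yields $s(t)-s_0\le M(0)-M(t)\le M(0)$, so $s(t)\le s_0+M(0)$ for all $t$. Being nondecreasing and bounded, $s$ converges and $s_\infty\le s_0+M(0)<\infty$. I do not expect a genuine obstacle: the only care needed is the bookkeeping of the two boundary fluxes in the computation of $\dot M$. It is worth stressing where the hypothesis $A=0$ enters, since it is essential: for $A>0$ there is a positive inflow across $x=0$ that sustains the population, the inequality $\dot s\le-\dot M$ fails, and one should not expect $s$ to remain bounded.
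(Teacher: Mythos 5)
Your proof is correct, and it takes a genuinely different (and more elementary) route than the paper's. You work with the plain mass $M(t)=\int_0^{s(t)}u(\cdot,t)$: since $A=0$ kills the contribution of $(-d,0)$, the mass balance becomes $-\dot M(t)=\dot s(t)+\int_0^{s(t)}\bigl(\int_{-\infty}^0J(x-y)\,{\rm d}x\bigr)u(y,t)\,{\rm d}y$, and you simply discard the nonnegative loss across $x=0$ to get $\dot s\le-\dot M$, hence the clean explicit bound $s_\infty\le s_0+M(0)$, exactly parallel to the identity $\dot s=-\dot M$ for Problem (1D-1FB). The paper argues instead with the weighted mass $M_\psi(t)=\int_0^{s(t)}u(\cdot,t)\psi$, where $\psi$ is the stationary solution of~\eqref{stationary2} ($\mathcal{L}\psi=0$ in $\overline{\mathbb{R}_+}$, $\psi=0$ in $\mathbb{R}_-$): because $\psi$ vanishes on $\mathbb{R}_-$ and is $\mathcal{L}$-harmonic on $\mathbb{R}_+$, the left-boundary loss is annihilated \emph{exactly} rather than just dropped, yielding $\dot M_\psi(t)\le-\alpha\,\dot s(t)$ with $\alpha=\inf_{\overline{\mathbb{R}_+}}\psi>0$ and so $s(t)-s_0\le M_\psi(0)/\alpha$. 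What your argument buys is simplicity and a sharper-looking constant with no auxiliary objects; what the paper's choice buys is the quantity $M_\psi$ itself, which is the engine of the rest of Section~\ref{sect-semirrecta} (the proof that $s_\infty=+\infty$ when $A>0$, Lemma~\ref{lem:equivalence}, and the refined asymptotics $t^{-1/2}s(t)\to c_*$), so introducing it at this point amortizes the setup. Both proofs share the same implicit step (differentiation under the integral sign, with the moving-boundary term vanishing since $u(s(t),t)=0$), which is at the level of rigor the paper itself adopts.
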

\begin{proof}
Let $M_\psi(t)=\int_0^{s(t)} u(\cdot,t)\psi$, where
$\psi$ is the solution to
\begin{equation}\label{stationary2}
\L \psi=0\quad\mbox{in }\overline{\mathbb{R}_+},\qquad
\psi=0\quad\mbox{in }\mathbb{R}_-,\qquad |\psi(x)-x|\le C<\infty\quad \text{for }x\in \R_+.
\end{equation}
Then, since $u(\cdot,t)=0$ in $[s(t),\infty)$,  $\psi=0$ in $\mathbb{R}_-$, and  $\psi(x)\ge\alpha>0$ in $\overline{\mathbb{R}_+}$, using the equation for the free boundary we get
\[
\begin{aligned}
\dot M_\psi(t)
&=\int_0^{s(t)}\int_{-d}^{s(t)} J(x-y)\psi(x)u(y,t)\,{\rm d}y{\rm d}x
-\int_0^{s(t)}\psi(y)u(y,t)\,{\rm d}y\\
&=-\int_{s(t)}^\infty\int_0^{s(t)} J(x-y)\psi(x)u(y,t)\,{\rm d}y{\rm d}x\le -\alpha \dot s(t).
\end{aligned}
\]
Hence, $\displaystyle s(t)-s_0\le \frac1\alpha(M_\psi(0)-M_\psi(t))\le \frac{M_\psi(0)}\alpha =\frac1\alpha \int_0^{s_0}\psi u_0$.
\end{proof}

As a consequence we have an exponential decay estimate for $u$.
\begin{prop}
Let $(u,s)$ be a solution to problem~{\rm (HL)}. If $A=0$ and $u_0$ is bounded, there exist $\lambda>0$, $C>0$ such that $\|u(\cdot,t)\|_{L^\infty(\mathbb{R})}\le C e^{-\lambda t}$.
\end{prop}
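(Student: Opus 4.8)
The plan is to combine the localization already obtained in Proposition~\ref{prop:localization} with a comparison against the linear nonlocal heat equation on a fixed bounded domain. Since $A=0$, that proposition gives $s_\infty<\infty$, so the habitat $(0,s(t))$ is contained in the \emph{fixed} bounded interval $I=(0,s_\infty)$ for every $t>0$. The strategy is then to dominate $u$ from above by the solution of the nonlocal heat equation posed on $I$ with zero exterior Dirichlet data, which decays exponentially at the rate set by the first Dirichlet eigenvalue of $-\mathcal{L}$ on $I$. This is exactly the mechanism already used for Problem~(1D-CS) in Section~\ref{sect:1D-CS}.

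First I would introduce the comparison function: let $v$ solve $\partial_t v-\mathcal{L}v=0$ in $I\times\mathbb{R}_+$, with $v=0$ in $(\mathbb{R}\setminus I)\times\mathbb{R}_+$ and $v(\cdot,0)=u_0$, and I would argue that $0\le u\le v$. Because $A=0$ the exterior data for $u$ vanishes, and because $s(t)\le s_\infty$ the function $u$ is a subsolution of the problem solved by $v$ on all of $I$: where $0<x<s(t)$ it satisfies the equation exactly, while on $s(t)<x<s_\infty$ one has $u=0$ and $\mathcal{L}u=\mathcal{A}_Ju\ge0$, so $\partial_t u-\mathcal{L}u\le0$ there. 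Since $u(\cdot,0)=u_0=v(\cdot,0)$ and both functions vanish outside $I$, the comparison principle for Problem~(HL) (the analogue of Proposition~\ref{prop:comparison}, valid as noted at the start of this section) would then yield $u\le v$.

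The remaining step is the exponential decay of $v$. I would take $\lambda$ to be the first eigenvalue of $-\mathcal{L}$ with Dirichlet conditions on the complement of the bounded interval $I$, and the key point to establish is $\lambda>0$: the restriction to $I$ of the convolution operator $w\mapsto J\ast w$ (extended by zero outside $I$) is a self-adjoint operator on $L^2(I)$ whose norm $\mu$ is strictly less than $1$, since a positive fraction of the mass of $J\ast w$ escapes the bounded set $I$; hence $\lambda=1-\mu>0$. Expanding $v$ along the eigenfunctions of $\mathcal{L}$ and using $u_0\in L^\infty(I)\subset L^2(I)$ gives $\|v(\cdot,t)\|_{L^\infty(I)}\le Ce^{-\lambda t}$, precisely the bound invoked for the limit problem in Section~\ref{sect:1D-CS}. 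Combining this with $0\le u\le v$, and noting that $u$ is supported in $I$ (with $u=A=0$ on $(-d,0)$), would finish the proof.

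The main obstacle is this last decay estimate for $v$, namely verifying that the first Dirichlet eigenvalue $\lambda$ is bounded away from zero on the bounded set $I$ and that this spectral gap translates into an $L^\infty$ decay rate for the nonlocal Dirichlet semigroup. This is, however, exactly the ingredient already employed in the asymptotic analysis of Problem~(1D-CS), so it can be quoted rather than reproved, and the rest of the argument is routine comparison.
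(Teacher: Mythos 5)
Your proposal is correct and follows essentially the same route as the paper: comparison of $u$ with the solution $v$ of the nonlocal heat equation on the fixed limit interval $(0,s_\infty)$ with zero exterior Dirichlet data, followed by the exponential decay of $v$ governed by the first Dirichlet eigenvalue of $\mathcal{L}$ (which the paper simply quotes from the reference \cite{ChChR}, just as you suggest doing). Your additional sketch of why the spectral gap is positive is a correct elaboration of that cited ingredient, not a different argument.
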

\begin{proof}
Let $v$ be the solution to
\[
	\partial_t v-\L v=0\ \text{in }(0,s_\infty)\times\mathbb{R}_+,\quad
	v=0\ \text{in } \mathbb{R}^2_+\setminus((0,s_\infty)
	\times\mathbb{R}_+),\quad
	v(\cdot,0)=u_0\ \mbox{in }(0,s_\infty).
\]
Then, $0\le u(x,t)\le v(x,t)\le C e^{-\lambda t}$, where $\lambda$ is the first eigenvalue of the operator $\L$  in~$(0,s_\infty)$ with homogeneous Dirichlet boundary conditions; see \cite{ChChR}.
%
\end{proof}
A different situation holds when $A>0$: the population will eventually colonize the whole space.
\begin{prop} Let $(u,s)$ be a solution to problem~{\rm (HL)}. If  $A>0$, then $s_\infty=+\infty$. \end{prop}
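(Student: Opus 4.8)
The plan is to argue by contradiction: assume $s_\infty<\infty$. Since $s$ is nondecreasing and bounded, $\int_0^\infty\dot s(t)\,{\rm d}t=s_\infty-s_0<\infty$, so it suffices to prove that $\dot s(t)\ge c>0$ for all large $t$, as this contradicts the integrability of $\dot s$. Writing the free boundary velocity as
\[
\dot s(t)=\int_{s(t)}^\infty\mathcal{A}_Ju(\cdot,t)=\int_{-d}^{s(t)}u(y,t)\Big(\int_{s(t)}^\infty J(x-y)\,{\rm d}x\Big)\,{\rm d}y,
\]
we see that a uniform positive lower bound on $\dot s$ will follow from a uniform positive lower bound for $u(\cdot,t)$ on a fixed subinterval lying just inside the free boundary $s(t)\approx s_\infty$.

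First I would produce such a lower bound from the constant injection $A>0$. Fix $\delta\in(0,d)$ and pick $t_0$ with $s(t_0)>s_\infty-\delta$. Let $v$ solve the fixed-domain problem $\partial_tv-\L v=0$ in $(0,s_\infty-\delta)\times(t_0,\infty)$ with $v=A$ in $(-d,0)$, $v=0$ in $(s_\infty-\delta,\infty)$ and $v(\cdot,t_0)=0$. On $(0,s_\infty-\delta)$ the function $u$ solves the same equation, but with the exterior data on the right replaced by its own nonnegative values, and $u(\cdot,t_0)\ge0=v(\cdot,t_0)$; hence the comparison principle for Problem~(HL) yields $u\ge v$ on $(0,s_\infty-\delta)\times(t_0,\infty)$.

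Next I would pass to the limit $t\to\infty$ in $v$. The difference $v-w$, where $w$ is the stationary solution of $\L w=0$ in $(0,s_\infty-\delta)$ with the same exterior data, solves the homogeneous Dirichlet problem and decays like $\textrm{e}^{-\lambda t}$, $\lambda$ being the positive first Dirichlet eigenvalue of $\L$ in $(0,s_\infty-\delta)$ (cf.~\cite{ChChR}); thus $v(\cdot,t)\to w$. Since $w=A>0$ in the left collar and $J>0$ in $(-d,d)$, the strong maximum principle forces $w>0$ throughout $(0,s_\infty-\delta)$. Consequently there are a constant $c_\delta>0$ and an interval $I=(s_\infty-\delta-\eta,s_\infty-\delta)$, with $\eta$ small, on which $\liminf_{t\to\infty}u(\cdot,t)\ge w\ge c_\delta$.

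Finally I would combine these facts. For $t$ large, $s(t)\in(s_\infty-\delta,s_\infty)$, so, provided $\delta$ and $\eta$ were fixed small enough, the support $\{|x-y|<d\}$ of $J$ meets the region $\{x>s(t),\ y\in I\}$ in a set of positive measure, uniformly in $t$. The displayed formula for $\dot s$ then gives $\dot s(t)\ge c_\delta\,c'>0$ for all large $t$, the desired contradiction. The main obstacle is the middle step: setting up the comparison on the fixed subdomain $(0,s_\infty-\delta)$ (which relies on the monotone dependence of the fixed-domain solution on its exterior data) and guaranteeing, through a suitable choice of $\delta$, $\eta$ and $t_0$, that the region where $u$ is bounded below genuinely overlaps the support of the kernel $J$ across the moving free boundary.
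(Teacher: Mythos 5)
Your proof is correct in substance, but it follows a genuinely different route from the paper's. The paper gives a weighted-mass argument: with $\psi$ the linear-growth stationary solution of \eqref{stationary2}, the computation \eqref{eq-Mpsi} yields $\frac{{\rm d}}{{\rm d}t}\big(M_\psi(t)+\tfrac12(s(t)+d+L)^2\big)\ge C_0A>0$, so $M_\psi(t)+\tfrac12(s(t)+d+L)^2$ grows at least linearly in $t$; if $s_\infty<\infty$, then $M_\psi$ stays bounded by the maximum principle (Proposition~\ref{prop:weak.maximum.principle}, with a truncation step when $u_0$ is unbounded), a contradiction. Yours is instead a qualitative barrier argument: trap a positive stationary profile below $u$ on a fixed interval reaching to within distance less than $d$ of $s_\infty$, conclude $\dot s(t)\ge c>0$ for all large $t$, and contradict $\int_0^\infty\dot s=s_\infty-s_0<\infty$. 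Both work, but two points in your write-up need repair. First, the inequality $u\ge v$ is not an instance of the comparison principle for Problem (HL), which compares two solutions of the \emph{free boundary} problem; what you need (and what is true) is the standard comparison/monotone dependence on exterior data for the \emph{linear fixed-domain} problem on $(0,s_\infty-\delta)\times(t_0,\infty)$, where $u$ is a supersolution because $(0,s_\infty-\delta)\subset(0,s(t))$ for $t\ge t_0$; this is the same kind of comparison the paper itself uses, in the opposite direction, elsewhere in Section~\ref{sect-semirrecta}. Second, invoking exponential decay of $v-w$ presupposes that the stationary solution $w$ exists, which you never establish; the cleanest fix is to drop the decay rate altogether and observe that $v$, having zero initial data and time-independent exterior data, is nondecreasing in $t$ and bounded above by $A$, hence converges monotonically to a stationary limit $w$, whose strict positivity then follows from the propagation argument you describe. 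As for what each approach buys: yours dispenses with the corrector $\psi$ and with any boundedness assumption on $u_0$ (no truncation needed), and is arguably more elementary; the paper's computation is quantitative, and \eqref{eq-Mpsi} is recycled in Lemma~\ref{lem:equivalence} and the subsequent results to obtain the $t^{1/2}$ growth law for $s$, information that your qualitative argument cannot give.
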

\begin{proof}
Let $\psi$ and $M_\psi$ be as in the proof of Proposition~\ref{prop:localization}. Then,
\begin{equation}\label{eq-Mpsi}
\begin{aligned}
\dot M_\psi(t)&=\int_{-d}^{s(t)} u(y,t)\int_0^{s(t)}J(x-y)\psi(x)\,{\rm d}x{\rm d}y-\int_0^{s(t)}\psi(y)u(y,t)\,{\rm d}y\\
&=A\int_{-d}^0\int_0^{s(t)}J(x-y)\psi(x)\,{\rm d}x\,{\rm d}y-\int_0^{s(t)}\int_{s(t)}^\infty J(x-y)\psi(x)u(y,t)\,{\rm d}x\,{\rm d}y\\
&\ge C_0 A-\int_0^{s(t)}\int_{s(t)}^{s(t)+d}J(x-y)\psi(x)u(y,t)\,{\rm d}x{\rm d}y
\end{aligned}
\end{equation}
 for $t\ge 1$ with $C_0=\int_{-d}^0\int_0^{s_1}J(x-y)\psi(x)\,{\rm d}x{\rm d}y>0$ and $s_1=s(1)>0$.

Now, since $\psi(x)\le x+L$ if $x\ge0$ for a certain constant $L$,
\[
\begin{aligned}
\dot M_\psi(t)&\ge C_0 A -(s(t)+d+L)\int_0^{s(t)}\int_{s(t)}^{s(t)+d}J(x-y)u(y,t)\,{\rm d}x{\rm d}y\\
&=C_0 A-(s(t)+d+L)\dot s(t)=\frac{{\rm d}}{{\rm d}t}\big(C_0At-\frac12(s(t)+d+L)^2\big).
\end{aligned}
\]
Hence,
\begin{equation}
\label{eq:estimate.below.s.M.psi}
(s(t)+d+L)^2+2M_\psi(t)\ge 2C_0A(t-1)+(s_1+d+L)^2+2M_\psi(1).
\end{equation}

Assume for a moment that   $u_0$ is bounded. From Proposition~\ref{prop:weak.maximum.principle}, 
if $s_\infty<\infty$, then $M_\psi(t)\le \int_0^{s_\infty}(\psi\, u(\cdot,t))\le s_\infty(s_\infty+L)\max\{\|u_0\|_{L^\infty(0,s_0)},A\}$, 
and we get a contradiction with~\eqref{eq:estimate.below.s.M.psi}, since the right-hand side of the inequality is unbounded.
 
If $u_0$ is not bounded, comparison with the solution corresponding to a truncation of $u_0$  yields the result.
\end{proof}

We now prove that solutions converge to $A$ uniformly on compact sets. As a first step we prove the result for the special case of trivial initial data. 

\begin{lema}
	\label{lem:convergencia.datos.triviales}
Let $(U,S)$ be the solution  to Problem~{\rm (HL)} with initial data $S_0=0$, $U_0=0$. Then $U(\cdot,t)$ converges monotonically to $A$ as $t\to\infty$ and uniformly on compact subsets of $\overline{\mathbb{R}_+}$.
\end{lema}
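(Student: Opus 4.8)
The plan is to combine monotonicity in time with a Liouville-type identification of the limit, and then to upgrade pointwise to uniform convergence by Dini's theorem.

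\textbf{Monotonicity and boundedness.} First I would show that $U(\cdot,t)$ is nondecreasing in $t$ and bounded above by $A$. Since $U_0=0\in L^\infty(\mathbb{R}_+)$, Proposition~\ref{prop:weak.maximum.principle} gives $0\le U(\cdot,t)\le A$ for all $t$. For the monotonicity, fix $h>0$ and regard $(x,t)\mapsto U(x,t+h)$ as the solution to Problem~(HL) with initial data $U(\cdot,h)\ge 0=U_0$ and $S(h)\ge 0=S_0$; the comparison principle for Problem~(HL) (analogous to Proposition~\ref{prop:comparison}) then yields $U(x,t+h)\ge U(x,t)$. Hence $U(\cdot,t)$ increases pointwise to a limit $U_\infty$ as $t\to\infty$, with $0\le U_\infty\le A$ and $U_\infty=A$ on $(-d,0)$.

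\textbf{The limit is a bounded stationary solution.} Since $A>0$, the free boundary satisfies $S_\infty=+\infty$ (the proposition asserting that the population colonises the whole space when $A>0$), so for every $x>0$ we have $\tau(x)<\infty$ and the integral identity $U(x,t)=\int_{\tau(x)}^t \textrm{e}^{-(t-r)}\mathcal{A}_JU(x,r)\,{\rm d}r$ holds for $t>\tau(x)$. As $U(\cdot,r)$ increases to $U_\infty$ with $0\le U\le A$, monotone convergence gives $\mathcal{A}_JU(x,r)\to \mathcal{A}_JU_\infty(x)$; writing $\sigma=t-r$ and using dominated convergence in $\int_0^{t-\tau(x)} \textrm{e}^{-\sigma}\mathcal{A}_JU(x,t-\sigma)\,{\rm d}\sigma$ (dominated by $A\textrm{e}^{-\sigma}$) shows the right-hand side tends to $\mathcal{A}_JU_\infty(x)$. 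Therefore $U_\infty(x)=\mathcal{A}_JU_\infty(x)$ for every $x>0$, i.e. $\mathcal{L}U_\infty=0$ in $\mathbb{R}_+$. In particular $U_\infty$, being a convolution-type average of a bounded function against the continuous kernel $J$, is continuous on $[0,\infty)$.

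\textbf{Identification of the limit.} Now I would prove $U_\infty\equiv A$, which is the main obstacle. Using that the constant $A$ satisfies $\mathcal{A}_JA=A$ in $\overline{\mathbb{R}_+}$, the nonnegative bounded function $v:=A-U_\infty$ solves $\mathcal{A}_Jv=v$ in $\mathbb{R}_+$, vanishes on $(-d,0)$, and satisfies $0\le v\le A$. Comparing with the linearly growing stationary solution $\psi$ of~\eqref{stationary2}, set $h_\eta=\eta\psi-v$ for $\eta>0$, so that $\mathcal{L}h_\eta=0$ in $\mathbb{R}_+$, $h_\eta=0$ on $(-d,0)$, and $h_\eta(x)\to+\infty$ as $x\to\infty$ because $\psi(x)\sim x$ while $v$ is bounded. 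A nonlocal maximum principle forces $h_\eta\ge0$: if $\inf h_\eta=-\mu<0$ were attained at some $x_0>0$, the identity $h_\eta(x_0)=\int_{-d}^\infty J(x_0-y)h_\eta(y)\,{\rm d}y$ together with $h_\eta\ge-\mu$ would give equality in the averaging bound, forcing $h_\eta\equiv-\mu$ on a $d$-neighbourhood of $x_0$ and, propagating to the right, contradicting $h_\eta\to+\infty$; whereas for $x_0\in(0,d)$ the strict deficit $\int_{-\infty}^{x_0}J<1$ yields $h_\eta(x_0)>-\mu$ directly. Hence $v\le\eta\psi$ for every $\eta>0$, and letting $\eta\to0$ gives $v\equiv0$, that is $U_\infty\equiv A$. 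The delicate points here are the attainment of the infimum and the behaviour of $h_\eta$ as $x\to0^+$, which I expect to dispatch with the strong maximum principle for $\mathcal{L}$.

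\textbf{Uniform convergence.} Finally, $\{U(\cdot,t)\}$ is a monotone family of continuous functions converging pointwise to the continuous function $A$ on $\overline{\mathbb{R}_+}$, so by Dini's theorem the convergence is uniform on every compact subset, which is precisely the assertion of the lemma.
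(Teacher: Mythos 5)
Your proof is correct, and it differs from the paper's in two substantive ways, one of which actually fills in a step the paper leaves implicit. For the identification of the limit, the paper passes to the limit in $\partial_t U=\mathcal{A}_JU-U$ (you do the equivalent with the integral equation) and then simply asserts that a bounded solution of $\mathcal{L}V=0$ in $\mathbb{R}_+$ with $V=A$ on $(-d,0)$ must be identically $A$; you supply a proof of this Liouville property by comparing $v=A-U_\infty$ with $\eta\psi$, $\psi$ the linearly growing solution of~\eqref{stationary2}, and running a propagation-of-minimum argument. Your sketch is sound, and the delicate points you flag do close: since $h_\eta=\eta\psi-v\to+\infty$ at infinity and $h_\eta=\mathcal{A}_Jh_\eta$ is continuous on $(0,\infty)$ with a limit at $0^+$, a negative infimum is attained on some $[0,R]$; at a point $x_0\ge d$ the identity $h_\eta(x_0)=\mathcal{A}_Jh_\eta(x_0)$ forces $h_\eta\equiv\inf h_\eta$ on $(x_0-d,x_0+d)$ and, iterating to the right, contradicts the growth, while for $x_0\in[0,d)$ the kernel charges $(-d,0)$, where $h_\eta=0>\inf h_\eta$, giving a strict inequality. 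For uniform convergence, the paper proves that $U(\cdot,t)$ is nonincreasing in $x$ (comparing $U(\cdot+h,\cdot)$ with $U$ after the waiting time $T_h=S^{-1}(h)$) and deduces uniformity on $[0,R]$ from $A\ge U(x,t)\ge U(R,t)$; you replace this by Dini's theorem, using only the time-monotonicity already established. That is legitimate and shorter, but note it needs each $U(\cdot,t)$ to be continuous on the compact set $[0,R]$ \emph{including} $x=0$, where the formulation of Problem~(HL) only asserts continuity on $\mathbb{R}_+$; this does hold, because $U(x,t)=\int_{\tau(x)}^t{\rm e}^{-(t-r)}\mathcal{A}_JU(x,r)\,{\rm d}r$ extends continuously to $x=0$, whereas the paper's spatial-monotonicity route sidesteps the issue. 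Finally, as you implicitly note, your argument invokes $S_\infty=+\infty$ and hence $A>0$; the case $A=0$ is trivial since then $U\equiv0$.
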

\begin{proof}
The key point is that $U$ is monotone both in space and time. 

We start with the monotonicity in time. Given $h>0$,  let $v(x,t;h)=U(x,t+h)$, $\zeta(t;h)=S(t+h)$. It is trivial to see that $(v,\zeta)$ is a solution to  Problem~{\rm (HL)}. Since $\zeta(0;h)=S(h)>S(0)$ and $v(x,0;h)=U(x,h)\ge 0=U(x,0)$, comparison yields the desired monotonicity, $v(x,t;h)=U(x,t+h)\ge U(x,t)$.

We now prove that $U(x,t)$ is nonincreasing in $x$ for every $t\ge0$. Given $h>0$, let $T_h:=S^{-1}(h)$, $v(x,t;h):=U(x+h,t)$, and $\zeta(t;h):=S(t)-h$. It is trivial to see that $(v,\zeta)$ satisfies
$$
\left\{
\begin{array}{l}
\partial_t v-\L v=0\ \text{in }\{ x\in(0,\zeta(t)), t>T_h\}, \quad
v=0\ \text{in }\{x\ge \zeta(t), t>T_h\},\\[8pt]
v\le A\ \text{in } (-d,0)\times(T_h,\infty),\quad
\displaystyle \dot \zeta(t)=\int_{\zeta(t)}^\infty\mathcal{A}_Jv(\cdot,t)\ \text{for }t>T_h.
\end{array}
\right.
$$
Moreover, $\zeta(T_h;h)=S(T_h)-h=0<S(T_h)$ and $v(x,T_h;h)=0\le U(x,T_h)$ for $x\in \mathbb{R}_+$. Hence, a comparison argument similar to the one in the proof of Proposition~\ref{prop:comparison} yields $U(x+h,t)=v(x,t;h)\le U(x,t)$ for all $x\in\mathbb{R}_+$ and $t\ge T_h$. On the other hand, $U(x+h,t)=0\le U(x,t)$ if $x\ge0$ and $0\le t\le T_h$, which completes the proof of the monotonicity in space.

We are now ready to prove convergence. Given $x\in\mathbb{R}_+$,  $U(x,t)$ is nondecreasing in $t$ and bounded by $A$. Hence,  there exists $V(x)=\lim_{t\to\infty}U(x,t)\le A$. Even more, 
$\lim_{t\to\infty} \partial_tU(x,t)$ also exists, since $\partial_t U=\mathcal{A}_J U-U$ for $t\ge\tau(x)$ and we can pass to the limit in the convolution by the monotone convergence of $U$ to $V$. We deduce that $\partial_tU(x,t)\to0$ as $t\to\infty$. Therefore, $V$ is a bounded solution to $\L V=0$ in $\R_+$ with $V=A$ on $(-d,0)$, and hence $V\equiv A$.

Let $R>0$. Since $U(R,t)\to A$ and $A\ge U(x,t)\ge U(R,t)$ for $x\le R$, convergence towards $A$ is uniform in $[0,R]$.
\end{proof}

We now pass to the general case, which will follow from a comparison argument.
\begin{prop} Let $(u,s)$ be a solution to problem {\rm (HL)}. Then $u(x,t)\to A$ as $t\to\infty$
	uniformly on compact subsets of $\overline{\R_+}$.
\end{prop}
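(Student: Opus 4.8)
The plan is to deduce the general case from the special case of trivial initial data treated in Lemma~\ref{lem:convergencia.datos.triviales}, by sandwiching an arbitrary solution between two solutions whose limits are already known to be $A$. The basic idea: we already know from the previous propositions that $s_\infty=+\infty$ when $A>0$, so the habitable region eventually fills all of $\overline{\mathbb{R}_+}$; and the weak maximum principle (Proposition~\ref{prop:weak.maximum.principle}) keeps the solution bounded above. The remaining work is to produce matching upper and lower bounds that both converge to $A$ on compacta.

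First I would establish the \textbf{lower bound}. Let $(U,S)$ be the solution with trivial data $U_0=0$, $S_0=0$ from Lemma~\ref{lem:convergencia.datos.triviales}. Since $u_0\ge 0=U_0$ and $s_0\ge 0=S_0$, the comparison principle (the analogue of Proposition~\ref{prop:comparison} for Problem~(HL), asserted in Section~\ref{sect-semirrecta}) gives $u(x,t)\ge U(x,t)$ in the common habitat, and hence $\liminf_{t\to\infty}u(x,t)\ge \lim_{t\to\infty}U(x,t)=A$ uniformly on any $[0,R]$. Next I would produce the \textbf{upper bound}. If $u_0$ is bounded, set $A^*=\max\{\|u_0\|_{L^\infty(\mathbb{R}_+)},A\}$ and compare $u$ from above with the trivial-data solution $(\bar U,\bar S)$ of Problem~(HL) but with boundary level $A^*$ in place of $A$ and \emph{nontrivial} data dominating $u_0$; more cleanly, I would instead compare $u$ with the solution $(\widehat U,\widehat S)$ to Problem~(HL) having the same boundary value $A$ but with initial data $\widehat U_0=A^*\chi_{(0,s_0)}\ge u_0$ and the same $s_0$. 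By comparison $u\le \widehat U$. The point of this comparison solution is that its limit is still governed by the stationary equation: running the same monotonicity-in-time-plus-passage-to-the-limit argument as in Lemma~\ref{lem:convergencia.datos.triviales} shows $\widehat U(\cdot,t)$ converges to the unique bounded solution $V$ of $\mathcal{L}V=0$ in $\mathbb{R}_+$ with $V=A$ on $(-d,0)$, namely $V\equiv A$. Hence $\limsup_{t\to\infty}u(x,t)\le A$ on compacta.

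Combining the two bounds yields $u(x,t)\to A$ uniformly on compact subsets of $\overline{\mathbb{R}_+}$. For the case of \emph{unbounded} $u_0$, I would reduce to the bounded case by truncation, exactly as was done at the end of the proof that $s_\infty=+\infty$: comparing $u$ from below with the solution corresponding to the truncated datum $\min(u_0,n)$ and letting $n\to\infty$ preserves the lower bound, while for the upper bound boundedness of $u_0$ was not essential once we dominate by a bounded comparison function on each compact time-space window.

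The step I expect to be the main obstacle is the upper bound, specifically verifying that the comparison solution $\widehat U$ (with nontrivial bounded initial data but the \emph{same} boundary level $A$) genuinely has $A$ as its limit rather than something larger. The monotonicity-in-time argument from Lemma~\ref{lem:convergencia.datos.triviales} does not transfer verbatim, because $\widehat U_0\not\equiv 0$ breaks the clean ordering $v(x,0;h)=\widehat U(x,h)\ge \widehat U(x,0)$. To handle this I would argue that $t\mapsto\int_0^{\widehat S(t)}\widehat U(\cdot,t)\psi$ (with $\psi$ as in~\eqref{stationary2}) together with the exponential-type decay of the influence of the initial datum forces $\widehat U(\cdot,t)$ to approach the stationary profile, and then invoke uniqueness of the bounded stationary solution $V\equiv A$. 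Alternatively, and more robustly, I would bound $u$ above on each compact set by first choosing $R$ large and comparing on the fixed box $(0,R)$ with a supersolution built from the constant $A$ plus a correction that decays, so that the decay of the initial datum's contribution is quantified by the spectral decay already used in the previous propositions.
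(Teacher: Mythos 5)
Your lower bound is exactly the paper's: comparison with the trivial-data solution $(U,S)$ of Lemma~\ref{lem:convergencia.datos.triviales} gives $\liminf_{t\to\infty}u\ge A$ uniformly on compacta. The genuine gap is in your upper bound. Sandwiching $u$ below the free-boundary solution $(\widehat U,\widehat S)$ with initial datum $A^*\chi_{(0,s_0)}\ge u_0$ is circular: the claim $\widehat U(\cdot,t)\to A$ is an instance of the very proposition you are proving, for a solution with nontrivial initial data, and --- as you yourself note --- the monotonicity-in-time device that drove Lemma~\ref{lem:convergencia.datos.triviales} is unavailable because the ordering $\widehat U(\cdot,h)\ge\widehat U(\cdot,0)$ fails. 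Neither fallback closes this. The first (the functional $M_\psi$ plus ``exponential-type decay of the influence of the initial datum'') appeals to a decay mechanism that the paper only establishes when $A=0$, where the habitat stays bounded and a first Dirichlet eigenvalue exists; when $A>0$ the habitat is unbounded ($s_\infty=+\infty$), there is no spectral gap, and no such decay statement is available. The second (a supersolution $A$ plus a decaying correction on a fixed box $(0,R)$) has a structural problem: for the nonlocal operator, comparison on $(0,R)$ requires the supersolution to dominate $u$ also on the exterior strip $(R,R+d)$ for all times, and there the only a priori information is $u\le\max\{\|u_0\|_{L^\infty},A\}$ from Proposition~\ref{prop:weak.maximum.principle}; a correction that large near $x=R$ for all time cannot decay to zero inside the box, so the construction as described does not yield $\limsup u\le A$.

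The paper's upper bound avoids free boundaries altogether, which is the missing idea: since the constant $A$ solves the equation on the half-line, set $w=A+v$ where $v$ solves the \emph{fixed-domain} problem $\partial_t v-\mathcal{L}v=0$ in $\mathbb{R}_+\times\mathbb{R}_+$, $v=0$ in $(-d,0)\times\mathbb{R}_+$, $v(\cdot,0)=(u_0-A)_+$. Then $w$ solves the equation, $w=A=u$ on $(-d,0)$, $w\ge A>0=u$ wherever $x\ge s(t)$, and $w(\cdot,0)=\max\{u_0,A\}\ge u_0$, so comparison in $\{0\le x\le s(t)\}$ gives $u\le A+v$; the half-line asymptotics of~\cite{ceqw2} give $v(\cdot,t)\to0$ uniformly, and the $\limsup$ follows. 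Note that this argument needs no boundedness of $u_0$, whereas yours does, and your truncation remark cannot repair that: truncating $u_0$ from above only produces lower bounds for $u$, never upper ones. If you want to keep a sandwich structure, the upper comparison object must be a solution on a fixed domain whose decay is already known, not another free-boundary solution.
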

\begin{proof} 
	Comparison yields $u(x,t)\ge U(x,t)$. Hence,
	$\liminf_{t\to\infty}u(x,t)\ge A$
	uniformly on compact subsets of $\R_+$.

	Let $v$ be the solution to
	\[
	\partial_tv-\L v=0\ \text{in }\R_+^2,\quad v=0\ \text{in } (-d,0)\times\mathbb{R}_+,\quad v(\cdot,0)=(u_0-A)_+ \ \text{in }\R_+.
	\]
	By the results of \cite{ceqw2} we know that $v(x,t)\to0$ uniformly in $\R_+$.
	On the other hand, since $\partial_t(v+A)-\L(v+A)=0$ in $\R_+^2$, $u_0\le v(x,0)+A$  and $v(x,t)+A>A>0$, comparison in $0\le x\le s(t)$, $t>0$ gives $u(x,t)\le v(x,t)+A$, so that
	$ \limsup_{t \to\infty}u(x,t)\le A$ uniformly on compact subsets of $\R_+$.
\end{proof}

\subsection{Refined asymptotics for the free boundary} Now we turn our attention to the asymptotic behaviour of the free boundary. Our aim is to prove that $t^{-1/2}s(t)$ has a limit. As the next lemma shows, this is equivalent to showing that $t^{-1}M_\psi(t)$ converges.
\begin{lema}
	\label{lem:equivalence}
	Let $(u,s)$ be a solution to problem {\rm (HL)} with  $A>0$.
	If $F(t):=t^{-1}M_\psi(t)$ converges to $F_\infty$ as $t\to\infty$,  then $
	\lim_{t\to\infty}t^{-1/2}s(t)=(2C_1 A-2F_\infty)^{1/2}$.
\end{lema}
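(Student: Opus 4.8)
The plan is to derive an asymptotic ODE for $s(t)$ directly from the free boundary condition and the definition of $M_\psi$, then integrate it using the hypothesis on $F$. First I would recall from the proof of the colonization result (equation~\eqref{eq-Mpsi} and the lines following it) that the free boundary equation rewrites as
\[
\dot M_\psi(t)=C_1 A-(s(t)+d+L)\dot s(t)+e(t),
\]
where $C_1:=\int_{-d}^0\int_0^\infty J(x-y)\psi(x)\,{\rm d}x{\rm d}y$ and $e(t)$ collects the error terms coming from replacing $\psi(x)$ by $x+L$ in the boundary layer $x\in(s(t),s(t)+d)$ and from extending the inner integral in $C_0A$ to the full half-line; here I should verify that $C_0A\to C_1A$ as $t\to\infty$ since $s(t)\to\infty$, and that $e(t)$ is lower order. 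The key identity is then obtained by recognizing the right-hand side as a total derivative: $(s(t)+d+L)\dot s(t)=\frac12\frac{{\rm d}}{{\rm d}t}(s(t)+d+L)^2$, exactly as in the earlier argument.

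Next I would integrate this relation on $(1,t)$ to get
\[
M_\psi(t)=M_\psi(1)+C_1 A(t-1)-\tfrac12\big((s(t)+d+L)^2-(s_1+d+L)^2\big)+\int_1^t e(r)\,{\rm d}r,
\]
and then divide by $t$ and pass to the limit. Using $F(t)=t^{-1}M_\psi(t)\to F_\infty$ by hypothesis, the constant terms $M_\psi(1)$, $C_1A$, $(s_1+d+L)^2$ vanish after division by $t$, and one is left with
\[
F_\infty=C_1 A-\tfrac12\lim_{t\to\infty}\frac{(s(t)+d+L)^2}{t}+\lim_{t\to\infty}\frac1t\int_1^t e(r)\,{\rm d}r.
\]
Since $s(t)\to\infty$ like a power of $t$, the additive constants $d+L$ inside the square and the error average contribute nothing to the leading order, giving $\lim_{t\to\infty}t^{-1}s(t)^2=2C_1A-2F_\infty$, which is precisely the claim $\lim_{t\to\infty}t^{-1/2}s(t)=(2C_1A-2F_\infty)^{1/2}$ once one checks this limit is nonnegative.

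The main obstacle I anticipate is controlling the error term $e(t)$ and justifying that the Cesàro average $t^{-1}\int_1^t e(r)\,{\rm d}r$ tends to zero. The two sources are the difference $C_0A-C_1A$, which is $A\int_{-d}^0\int_{s(t)}^\infty J(x-y)\psi(x)\,{\rm d}x{\rm d}y$ and vanishes because $s(t)\to\infty$ while $J$ is compactly supported, and the replacement error $\int_0^{s(t)}\int_{s(t)}^{s(t)+d}(\psi(x)-x-L)J(x-y)u(y,t)\,{\rm d}x{\rm d}y$, which is controlled by $\sup_{x>0}|\psi(x)-x-L|$ times $\dot s(t)$; the latter is integrable-to-$o(t)$ provided one already knows $s(t)=O(\sqrt t)$ and $\dot s(t)=O(t^{-1/2})$, bounds available from the growth estimate~\eqref{eq:estimate.below.s.M.psi} together with $\dot s=\int_{s}^\infty\mathcal{A}_Ju$. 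The only genuine subtlety is that a clean two-sided asymptotic $s(t)\sim c\sqrt t$ is not yet in hand — only that $t^{-1/2}s(t)$ has a limit once $F$ converges — so I must be careful to phrase the argument as an implication rather than invoking the conclusion in the estimates. I would close the gap by noting that \eqref{eq:estimate.below.s.M.psi} already gives $s(t)^2\le 2C_1At+o(t)+2M_\psi(t)=O(t)$ once $M_\psi(t)=O(t)$, which follows from $F$ being convergent, so the $O(\sqrt t)$ bound on $s$ is available a priori and the error average is genuinely $o(1)$.
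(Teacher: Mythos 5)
Your overall architecture is sound and is, in fact, the paper's own argument in light disguise: writing $\dot M_\psi(t)=C_1A-(s(t)+d+L)\dot s(t)+e(t)$ with $0\le e(t)\le (d+2L)\dot s(t)$ is equivalent to the paper's pair of one-sided differential inequalities, obtained from $\psi(x)\le x+L$ (giving $\dot M_\psi\ge C_1A-(s+d+L)\dot s$) and from $\psi(x)\ge x-L$ (giving $\dot M_\psi\le C_1A-(s-L)\dot s$); the paper integrates both, divides by $t$, and uses $F(t)\to F_\infty$ exactly as you do. Your observation that the term $C_0A-C_1A$ vanishes is also correct, and in fact it vanishes \emph{identically} once $s(t)\ge d$, because $J$ is supported in $B(0,d)$; for the same reason your $C_1$, with inner integral over $(0,\infty)$, coincides with the paper's $C_1$.

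The genuine gap sits at the very point you flagged as the main obstacle. To make the Ces\`aro average $t^{-1}\int_1^t e(r)\,{\rm d}r\le C\,t^{-1}\bigl(s(t)-s(1)\bigr)$ tend to zero you need an a priori \emph{upper} bound $s(t)=O(\sqrt t)$ (or at least $s(t)=o(t)$), and you propose to extract it from~\eqref{eq:estimate.below.s.M.psi}. But that estimate points the wrong way: it reads $(s(t)+d+L)^2+2M_\psi(t)\ge 2C_0A(t-1)+\cdots$, i.e.\ it is a \emph{lower} bound for $s^2$ in terms of $t$ and $M_\psi$ (it is precisely what the paper uses to prove $s_\infty=+\infty$), and no upper bound on $s$ follows from it, with or without $M_\psi(t)=O(t)$. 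Likewise, $\dot s(t)=O(t^{-1/2})$ is not available a priori; the free boundary equation only yields $\dot s(t)=O(1)$. The repair stays entirely within your framework, in either of two ways: (i) derive the reverse inequality $\dot M_\psi(t)\le C_1A-(s(t)-L)\dot s(t)$ from $\psi(x)\ge x-L$, integrate, and use $M_\psi\ge0$ to get $\tfrac12(s(t)-L)^2\le C_1At+C$ — this is the paper's route; or (ii) absorb inside your own integrated identity: since $M_\psi\ge0$ and $\int_1^te(r)\,{\rm d}r\le (d+2L)s(t)$, you obtain $\tfrac12\bigl(s(t)+d+L\bigr)^2\le C_1At+C+(d+2L)s(t)$, and absorbing the linear term into the quadratic one gives $s(t)=O(\sqrt t)$. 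With that bound in hand, the rest of your argument goes through verbatim.
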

\begin{proof}
	Going back to \eqref{eq-Mpsi} and using that $s(t)\to\infty$ we find for some $t_0$ large that
	\[
	M_\psi(t)\ge C_1A(t-t_0)+\frac12(s(t_0)+d+L)^2-\frac12(s(t)+d+L)^2+M_\psi(t_0)\quad \text{for }t\ge t_0,
	\]
	where $C_1=\int_{-d}^0\int_0^d J(x-y)\psi(x)\,{\rm d}x{\rm d}y$.
	Again from \eqref{eq-Mpsi} and using this time that $\psi(x)\ge x-L$ and $u(x,t)\le A$ we get for every $t>0$,
	\[\begin{aligned}
	\dot M_\psi(t)&=A\int_{-d}^0\int_0^{s(t)}J(x-y)\psi(x)\,{\rm d}x{\rm d}y
	-\int_0^{s(t)}\int_{s(t)}^\infty J(x-y)\psi(x)u(y,t)\,{\rm d}x{\rm d}y\\
	&\le A\int_{-d}^0\int_0^d J(x-y)\psi(x)\,{\rm d}x{\rm d}y
	-(s(t)-L)\int_0^{s(t)}\int_{s(t)}^\infty J(x-y)u(y,t)\,{\rm d}x{\rm d}y\\
	&=C_1 A-(s(t)-L)\dot s(t)=\frac d{dt}\big[C_1At -\frac12(s(t)-L)^2\big].
	\end{aligned}
	\]
	Thus, $M_\psi(t)\le  C_1A t-\frac12 (s(t)-L)^2+\frac12L^2$,
	and hence	$\left|t^{-1}(s(t))^2-\left(2C_1A-2F(t)\right)\right|=o(1)$ as $t\to\infty$.
\end{proof}

We start by considering the special case of the solution $(U,S)$ with trivial initial data.
\begin{lema}
	\label{prop:conv.s.trivial.initial.data}
	Let $(U,S)$ be as in~Lemma~\ref{lem:convergencia.datos.triviales}. Then $F(t)$ converges. Hence, there is a constant $c_*>0$ such that $t^{-1/2}S(t)\to c_*$ as $t\to\infty$. 
\end{lema}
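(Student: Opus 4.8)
The plan is to reduce everything to the convergence of $F(t)=t^{-1}M_\psi(t)$ via Lemma~\ref{lem:equivalence}, and to establish this convergence by a parabolic rescaling that identifies the large--time behaviour with the self-similar solution of the \emph{local} one-phase Stefan problem. First I would record, from the computation in the proof of Lemma~\ref{lem:equivalence}, the two-sided control
\begin{equation*}
C_1 A t-\tfrac12\big(S(t)+d+L\big)^2+c\le M_\psi(t)\le C_1 A t-\tfrac12\big(S(t)-L\big)^2+c',
\end{equation*}
which yields $2F(t)+t^{-1}S(t)^2\to 2C_1A$ with an error $O(t^{-1/2})$, once one knows $S(t)=O(\sqrt t)$ (itself a consequence of the upper bound together with $M_\psi\ge0$). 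Thus the convergence of $F$ is equivalent to that of $p(t):=t^{-1}S(t)^2$. Moreover, using $U\le A$ (Lemma~\ref{lem:convergencia.datos.triviales}) and $\psi(x)\le x+L$ from~\eqref{stationary2} one gets $F(t)\le\tfrac A2 p(t)+o(1)$, so that $p$ stays in a compact subinterval of $(0,\infty)$. The whole problem therefore reduces to \emph{ruling out oscillation} of $p$, i.e. of $t^{-1/2}S(t)$.

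For the main step I would set, for $\lambda>0$, $U_\lambda(y,\tau)=U(\lambda y,\lambda^2\tau)$ and $S_\lambda(\tau)=\lambda^{-1}S(\lambda^2\tau)$. A direct change of variables shows that $(U_\lambda,S_\lambda)$ solves Problem~(HL) with the same constant $A$ and trivial data, but with $J$ replaced by the concentrated kernel $J_\lambda(\xi)=\lambda J(\lambda\xi)$ and diffusion operator $\lambda^2\mathcal{L}_\lambda$; the key point is that the diffusivity is preserved, $\lambda^2\mathcal{L}_\lambda\to\a\,\partial_{yy}$ as $\lambda\to\infty$. The monotonicity of $U$ in both $x$ and $t$ and the convergence $U(\cdot,t)\to A$ from Lemma~\ref{lem:convergencia.datos.triviales}, the uniform bound $0\le U_\lambda\le A$, and the a priori estimate $S_\lambda(\tau)\le C\sqrt\tau$ furnish enough compactness (Arzel\`a--Ascoli in the self-similar variable together with the monotonicity of the free boundaries) to extract, along any sequence $\lambda\to\infty$, a limit $(V,\Sigma)$.

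It then remains to identify this limit. Passing to the limit in the integral formulation~\eqref{eq:integral.version.half-line}, and using that the convolution against $J_\lambda$ converges to local diffusion, one checks that $(V,\Sigma)$ solves the classical one-phase Stefan problem $\partial_\tau V=\a\,\partial_{yy}V$ in $\{0<y<\Sigma(\tau)\}$, $V=A$ at $y=0$, $\dot\Sigma=-\a\,\partial_y V(\Sigma(\tau),\tau)$, $\Sigma(0)=0$, whose unique solution is self-similar, $\Sigma(\tau)=c_*\sqrt\tau$ with $c_*>0$ fixed by the Stefan relation. Since the limit is unique and independent of the sequence, $S_\lambda(1)=\lambda^{-1}S(\lambda^2)\to c_*$, and writing $t=\lambda^2$ gives $t^{-1/2}S(t)\to c_*$; hence $p(t)$, and therefore $F(t)$, converge. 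I expect the delicate point to be precisely this passage to the local limit: justifying compactness of the rescaled free boundaries $S_\lambda$ and the stability of the nonlocal Stefan flux $\int_{S_\lambda(\tau)}^\infty\mathcal{A}_{J_\lambda}U_\lambda$ under $\lambda\to\infty$, so that it converges to the local normal derivative $-\a\,\partial_y V$. Equivalently, if one wishes to remain within the nonlocal setting, the obstacle is to upgrade the one-sided comparisons coming from monotonicity (the time-rescalings $U(\cdot,\sigma t)$, $\sigma>1$, are supersolutions, since $\partial_t U=\mathcal{L}U\ge0$) into a two-sided squeeze at the diffusive scale, which is exactly where the non-exact scaling of $\mathcal{L}$ must be controlled.
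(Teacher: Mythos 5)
Your opening reduction is fine: the two-sided bounds extracted from the proof of Lemma~\ref{lem:equivalence}, together with $M_\psi\ge 0$, do give $S(t)=O(\sqrt t)$ and $2F(t)+t^{-1}S^2(t)\to 2C_1A$, so convergence of $F$ is indeed equivalent to convergence of $p(t):=t^{-1}S^2(t)$, with any limit confined to a compact subinterval of $(0,\infty)$. The genuine gap is everything after that: the rescaling argument is a program whose two central steps --- compactness of $(U_\lambda,S_\lambda)$ strong enough to pass to the limit, and convergence of the rescaled Stefan flux $\dot S_\lambda(\tau)=\lambda^2\int_{S_\lambda(\tau)}^\infty\mathcal{A}_{J_\lambda}U_\lambda(\cdot,\tau)$ to the local flux $-\a\,\partial_y V(\Sigma(\tau),\tau)$ --- are precisely the hard analytic content, and you name them rather than prove them. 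Concretely: the nonlocal equation has no smoothing effect, so there are no uniform-in-$\lambda$ moduli of continuity to feed Arzel\`a--Ascoli; what monotonicity in $x$ and $t$ gives (via Helly) is pointwise a.e.\ subsequential convergence, and that is useless for the flux term, because for each $\tau$ the integrand of $\lambda^2\int_{S_\lambda}^\infty\mathcal{A}_{J_\lambda}U_\lambda$ is supported in a layer of width $d/\lambda$ around the free boundary, a set of vanishing measure. The limit of that flux is governed by the boundary-layer profile of $U$ at the original scale $d$ (the profile encoded in the correctors $\phi$, $\psi$), not by the bulk limit $V$, so the claimed stability is essentially as hard as the statement being proved. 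Finally, even granting a limit point $(V,\Sigma)$, independence of the subsequence requires showing that the limit solves the local one-phase Stefan problem in a formulation with uniqueness (a weak/enthalpy formulation, with the boundary condition $V(0,\cdot)=A$ surviving the limit), none of which is addressed.

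By contrast, the paper never leaves the nonlocal setting and never identifies the limit problem: it shows that $F$ is bounded above, since \eqref{eq-Mpsi} gives $\dot M_\psi\le C_1A$, and that $F$ is nondecreasing, by expressing $M_\psi(t)$ through the representation formula $U(x,t)=\int_{\tau(x)}^t \textrm{e}^{-(t-r)}\mathcal{A}_JU(x,r)\,{\rm d}r$ and exploiting the time-monotonicity of $U$ established in Lemma~\ref{lem:convergencia.datos.triviales}; a bounded monotone function converges, and Lemma~\ref{lem:equivalence} then converts convergence of $F$ into convergence of $t^{-1/2}S(t)$. Note also that your route, if completed, would prove strictly more than the lemma asserts: it would identify $c_*$ as the Neumann constant of the classical Stefan problem with diffusivity $\a$, a statement the paper never makes and which should be regarded as a separate, substantially harder localization theorem rather than as a proof of this lemma.
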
 
\begin{proof}
From~\eqref{eq-Mpsi} we get
\[\begin{aligned}
\dot M_\psi(t)&\le C_1A-\int_0^{S(t)}\int_{S(t)}^\infty J(x-y)\psi(x)U(y,t)\,{\rm d}x{\rm d}y\le C_1A.
\end{aligned}
\]
Hence, $
M_\psi(t)\le C_1At$, which means that $F$ is bounded. On the other hand, 
\[\begin{aligned}
F'(t)=&\frac1{t^2}\left(t\int_0^{S(t)}\int_{-d}^\infty J(x-y)\psi(x)U(y,t)\,{\rm d}y{\rm d}x\right.\\
&\left.-\int_{\tau(x)}^te^{-(t-r)}\int_0^{S(t)}\int_{-d}^\infty J(x-y)\psi(x)U(y,r)\,{\rm d}y{\rm d}x{\rm d}r\right).
\end{aligned}
\]
Since  $U(y,r)\le U(y,t)$ for $r\le t$, $e^{-(t-r)}\le 1$ and $t-\tau(x)\le t$, see the proof of~Lemma~\ref{lem:convergencia.datos.triviales},
and hence $F'\ge0$. We conclude that there exists $F_\infty=\lim_{t\to\infty} F(t)$, from where the existence of $c_*$ follows, thanks to Lemma~\ref{lem:equivalence}.
\end{proof}

Now we consider more general data. Unfortunately, we have to impose a technical restriction on the size of the initial data. 

\begin{prop} Let $(u,s)$ be a solution to problem {\rm (HL)}. If $A>0$ and $\|u_0\|_{L^\infty(0,s_0)}<A$, then 
$t^{-1/2}s(t)\to c_*$ as $t\to\infty$, where $c_*$ is the constant given by Proposition~\ref{prop:conv.s.trivial.initial.data}.
\end{prop}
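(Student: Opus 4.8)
The plan is to sandwich the free boundary $s$ between the free boundary $S$ of the trivial-data solution $(U,S)$ of Lemmas~\ref{lem:convergencia.datos.triviales} and~\ref{prop:conv.s.trivial.initial.data} and a suitable time-translate of that same object, and then to exploit the already established convergence $t^{-1/2}S(t)\to c_*$. Both bounds will come from the comparison principle for Problem~(HL) (the one analogous to Proposition~\ref{prop:comparison}), applied to pairs of solutions carrying the same boundary value $A$ on $(-d,0)$.

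For the lower bound I would simply note that $u_0\ge 0=U_0$ and $s_0\ge 0=S_0$, so comparison gives $u\ge U$ and hence $s(t)\ge S(t)$ for all $t>0$. Dividing by $t^{1/2}$ and using Lemma~\ref{prop:conv.s.trivial.initial.data} yields $\liminf_{t\to\infty}t^{-1/2}s(t)\ge c_*$.

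The upper bound is the delicate step, and it is precisely here that the hypothesis $\|u_0\|_{L^\infty(0,s_0)}<A$ is used. The idea is to dominate $u$ by a time-translate of $U$. By Lemma~\ref{lem:convergencia.datos.triviales}, $U(\cdot,t)\to A$ monotonically and uniformly on the compact set $[0,s_0]$, while $S(t)\to\infty$. Since $\|u_0\|_{L^\infty(0,s_0)}<A$, I can therefore choose $t_1>0$ so large that $U(x,t_1)\ge \|u_0\|_{L^\infty(0,s_0)}\ge u_0(x)$ for every $x\in(0,s_0)$ and $S(t_1)\ge s_0$. The pair $(x,t)\mapsto\big(U(x,t+t_1),\,S(t+t_1)\big)$ is again a solution of Problem~(HL), now with initial data $U(\cdot,t_1)\ge u_0$ on $(0,s_0)$, initial free boundary $S(t_1)\ge s_0$, and the same boundary datum $A$. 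Comparison then gives $s(t)\le S(t+t_1)$ for all $t>0$.

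Finally I would pass to the limit by writing $t^{-1/2}s(t)\le \big(\tfrac{t+t_1}{t}\big)^{1/2}\,(t+t_1)^{-1/2}S(t+t_1)$ and using $(t+t_1)^{-1/2}S(t+t_1)\to c_*$ together with $\big(\tfrac{t+t_1}{t}\big)^{1/2}\to 1$, which gives $\limsup_{t\to\infty}t^{-1/2}s(t)\le c_*$; combined with the lower bound this proves $t^{-1/2}s(t)\to c_*$. The only genuine obstacle is the construction of the dominating time-translate: it relies crucially on the strictness $\|u_0\|_{L^\infty(0,s_0)}<A$, because $U(\cdot,t)$ remains strictly below $A$ at every finite time, so an initial datum attaining the level $A$ on a set of positive measure could not be captured by this translate and the argument would break down exactly at that borderline case.
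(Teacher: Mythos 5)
Your proof is correct, and its first half is exactly the paper's: the strict inequality $\|u_0\|_{L^\infty(0,s_0)}<A$ together with the monotone uniform convergence of Lemma~\ref{lem:convergencia.datos.triviales} is used to pick a time translate $t_1$ with $U(\cdot,t_1)\ge u_0$ and $S(t_1)>s_0$, and comparison then traps $u$ between $U(\cdot,t)$ and $U(\cdot,t+t_1)$. Where you diverge is the endgame. The paper does \emph{not} compare free boundaries directly; it sandwiches the weighted moment $F(t)=t^{-1}\int_0^{s(t)}\psi\,u(\cdot,t)$ between $t^{-1}\int_0^{S(t)}\psi\,U(\cdot,t)$ and $t^{-1}\int_0^{S(t+t_1)}\psi\,U(\cdot,t+t_1)$, concludes $F(t)\to F_\infty$, and then invokes the equivalence Lemma~\ref{lem:equivalence} a second time to convert convergence of $F$ into $t^{-1/2}s(t)\to(2C_1A-2F_\infty)^{1/2}=c_*$. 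You instead extract the free-boundary ordering $S(t)\le s(t)\le S(t+t_1)$ and divide by $t^{1/2}$, which is more economical: no second pass through Lemma~\ref{lem:equivalence} is needed. The one point you should make explicit is that the comparison principle, as stated in Proposition~\ref{prop:comparison} (and its (HL) analogue), only asserts the ordering of the \emph{solutions}; the ordering of the free boundaries is what its proof actually establishes along the way, or alternatively it can be recovered from the ordering of solutions plus positivity: if one had $s(t)>S(t+t_1)$ for some $t$, then on the interval $\big(S(t+t_1),s(t)\big)$ one would have $u(\cdot,t)>0=U(\cdot,t+t_1)$, contradicting $u\le U(\cdot,\cdot+t_1)$. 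With that one-line justification inserted, your argument is complete, and the two approaches deliver the same constant $c_*$ for the same reason — both ultimately rest on Lemma~\ref{prop:conv.s.trivial.initial.data} for the trivial-data solution.
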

\begin{proof}
Lemma~\ref{lem:convergencia.datos.triviales} implies that there exists $t_0>0$ such that
$
U(x,t_0)\ge \|u_0\|_{L^\infty}\ge u_0(x)$ in $0\le x\le s_0$.
Moreover, we can choose $t_0$ large enough so that $S(t_0)>s_0$. Thus,
by the comparison principle for solutions of Problem (HL), we get
 $U(x,t)\le u(x,t)\le U(x,t+t_0)$ for $(x,t)\in\mathbb{R}_+^2$.
Therefore,
\[
\frac{\int_{0}^{S(t)} (\psi U(\cdot,t))}t\le F(t)=\frac{\int_0^{s(t)} (\psi u(\cdot,t))}t\le\frac{\int_0^{S(t+t_0)} (\psi U(\cdot,t+t_0))}t.
\]
Since both the left-hand side and the right-hand side converge to the same constant $F_\infty$, see Lemma~\ref{prop:conv.s.trivial.initial.data}, we conclude that $F(t)$ converges to $F_\infty$, from where the result for the asymptotic behaviour of the free boundary follows immediately.
\end{proof}

\section{Radial solutions in higher dimensions}
\label{sect:radial.solutions} \setcounter{equation}{0}

In this section we deal with radial solutions in the whole space in any spatial dimension. 

\noindent\emph{Notation. }  Let $f$ be a radial function, $f(x)=f_0(r)$, $r=|x|$. If no confusion arises we will use the same symbol $f$ both for the original function $f$ and for its radial version $f_0$.

Let us recall that if $f$ is a radial function, its Fourier transform $\mathcal{F}f$ is also a radial function, 
$$
\mathcal{F}f(|\xi|)=\frac{2\pi}{|\xi|^{\frac N2-1}}\int_0^\infty f(r)J_{\frac N2-1}(2\pi r|\xi|)r^{\frac N2}\,{\rm d}r,
$$
where $J_\nu$  denotes the Bessel functions of the first kind of order $\nu$; see~Theorem IV.3.3 in~\cite{Stein-Weiss-book}. Therefore, the convolution of two radial functions 
$$
(J*f)(x)=\int_{\mathbb{R}^N}J(|x-y|)f(|y|)\,{\rm d}y=\int_{\mathbb{R}^N}\mathcal{F}J(\xi)\mathcal{F}f(\xi)\textrm{e}^{2\pi \textrm{i}x\cdot\xi}\,{\rm d}\xi
$$
is also radial, and can be expressed by the one-dimensional integral 
$$
(J*f)(x)=\frac{2\pi}{|x|^{\frac N2-1}}\int_0^\infty \mathcal{F}J(r)\mathcal{F}f(r)J_{\frac N2-1}(2\pi r|x|)r^{\frac N2}\,{\rm d}r.
$$

\noindent\emph{Notation. } The  measure of the unit ball in $\mathbb{R}^N$ will be denoted by $\omega_N$, the population at time $t\ge0$ by $M(t)=\int_{\mathbb{R}^N}u(\cdot,t)$, and its viable habitat by $\Omega_t=\{x\in \mathbb{R}^N:|x|<R(t)\}$. Finally,  $\Omega:=\{(x,t)\in\mathbb{R}^N\times \mathbb{R}_+: x\in\Omega_t,\, t>0\}$, and $B_r=B(0,r)$..

The problem looks similar to problem (1D-1FB).

\medskip

\noindent\textsc{Problem (R)}: Given $R_0>0$ and $u_0\in C(\mathbb{R}^N)$ nonnegative and radially symmetric such that $u_0=0$ in $\mathbb{R}^N\setminus B_{R_0}$,  find  a nonnegative function $u\in C(\mathbb{R}^N\times\overline{\mathbb{R}_+})$, radially symmetric in the spatial variable,  and a nonincreasing function  $R\in C^1(\overline{\mathbb{R}_+})$ such that
\begin{equation}\label{eq:problem.R}
\left\{
\begin{array}{l}
\partial_t u-\L u=0\ \text{in }\Omega,\quad u=0 \ \text{in }(\mathbb{R}^N\times\mathbb{R}_+)\setminus\Omega,\quad
u(\cdot,0)=u_0,\\[8pt]
\displaystyle\frac{\rm d}{{\rm d}t}(R^N(t))=N\int_{R(t)}^{\infty}r^{N-1}\mathcal{A}_J u(r,t)\,{\rm d}r\ \text{for } t>0,\quad R(0)=R_0.
\end{array}
\right.
\end{equation}

\subsection{Existence and uniqueness}The integral version of the problem reads
\begin{equation}
\label{eq:radial.integral.version}
\begin{cases}
\displaystyle R^N(t)=R_0^N+N\int_0^t\int_{R(z)}^\infty r^{N-1}\mathcal{A}_J u(r,z)\,\textrm{d}r\textrm{d}z,&t>0,\\[8pt]
\displaystyle u(r,t)=\textrm{e}^{-t}u_0(r)+ \int_{\tau(r)}^t \textrm{e}^{-(t-z)}\mathcal{A}_J u(r,z)\,\textrm{d}z,&\displaystyle r<R_\infty,\; t>\tau(r),\\[8pt]
\displaystyle u(r,t)=0,&\displaystyle r\ge R(t),\;t>0,
\end{cases}
\end{equation}
where $R_\infty=\lim_{t\to\infty}R(t)$ and
$$
\tau(r)=0 \ \text{for }r\le R_0,\quad \tau(r)=
\sup\{t\ge0:R(t)=r\}\ \text{for }r\in (R_0,R_\infty).
$$
This problem has a unique solution in the appropriate functional space. 

\begin{lema}
	\label{lem:radial.integral.equation}
	Given $R_0>0$  and $u_0\in  C(\mathbb{R}^N)$ nonnegative and radially symmetric such that $u_0=0$ in the set $\mathbb{R}^N\setminus B_{R_0}$,   there is a unique pair $(u,R)\in C(\overline{\mathbb{R}_+};L^1(\R^N))\times C(\overline{\mathbb{R}_+})$, with $u$ radially symmetric solving~\eqref{eq:radial.integral.version}.
\end{lema}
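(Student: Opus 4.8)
The plan is to mirror the contraction-mapping argument used to prove Lemma~\ref{lem:existence-line.integral.equation}, taking care of the extra radial weight $r^{N-1}$ and of the fact that the free-boundary equation in~\eqref{eq:radial.integral.version} is written for $R^N$ rather than for $R$ itself. Fix $T>0$ and work in the Banach space $\mathcal{B}_T=C([0,T];L^1(\mathbb{R}^N))\times C([0,T])$ with norm $\|(u,R)\|=\|u\|_{L^\infty(0,T;L^1(\mathbb{R}^N))}+\|R\|_{L^\infty(0,T)}$. I would take as the admissible set the closed subset $K_T\subset\mathcal{B}_T$ of pairs $(u,R)$ with $u\ge0$ radial, $u(\cdot,t)=0$ for $|x|\ge R(t)$, $\|u(\cdot,t)\|_{L^1(\mathbb{R}^N)}\le\|u_0\|_{L^1(\mathbb{R}^N)}$, $u(\cdot,0)=u_0$, and $R$ nondecreasing with $R(0)=R_0$ (so that $R(t)\ge R_0>0$ throughout). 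On $K_T$ I define $\mathcal{T}(u,R)=(v,\rho)$ through the right-hand sides of~\eqref{eq:radial.integral.version}, with $\tau=\tau_\rho$ the generalized inverse of $\rho$ as in the statement. Because $J$ is radial, $\mathcal{A}_Ju=J*u$ is radial whenever $u$ is, as recalled before the statement, so $v$ is radial and $\mathcal{T}$ maps radial data to radial data.

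The first task is to check $\mathcal{T}(K_T)\subset K_T$. The mass bound and the continuity of $v$ from $[0,T]$ into $L^1(\mathbb{R}^N)$ with $v(\cdot,0)=u_0$ are obtained exactly as in the proof of Lemma~\ref{lem:existence-line.integral.equation}: the identity $\|\mathcal{A}_Ju(\cdot,t)\|_{L^1(\mathbb{R}^N)}=\|u(\cdot,t)\|_{L^1(\mathbb{R}^N)}$ still holds since $\int_{\mathbb{R}^N}J=1$, and Fubini together with the weight $r^{N-1}$ reproduces the one-dimensional estimates verbatim, with $\int_0^{R(r)}N\omega_N r'^{\,N-1}\mathcal{A}_Ju\,{\rm d}r'\le\|\mathcal{A}_Ju(\cdot,r)\|_{L^1(\mathbb{R}^N)}$ playing the role of the integral over a half-line. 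Along the way I record the a priori upper bound $\rho(t)^N\le R_0^N+\frac{t}{\omega_N}\|u_0\|_{L^1(\mathbb{R}^N)}$, which follows from $\int_{R(z)}^\infty r^{N-1}\mathcal{A}_Ju\,{\rm d}r\le\frac1{N\omega_N}\|u_0\|_{L^1(\mathbb{R}^N)}$ and shows that $\rho$ stays in a fixed compact interval $[R_0,R_{\max}(T)]$ on $[0,T]$.

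Next I would prove that $\mathcal{T}$ is a strict contraction for $T$ small. Given $(u_i,R_i)\in K_T$ and $(v_i,\rho_i)=\mathcal{T}(u_i,R_i)$, I first estimate $|\rho_1(t)^N-\rho_2(t)^N|$ exactly as in the one-dimensional computation: splitting into a term controlled by $\|\mathcal{A}_J(u_1-u_2)(\cdot,r)\|_{L^1}\le\|u_1-u_2\|$ and a term $\int_{\min(R_1,R_2)}^{\max(R_1,R_2)}r^{N-1}\mathcal{A}_Ju_2\,{\rm d}r\le\frac1N\|J\|_{L^\infty}\|u_0\|_{L^1}\,|R_1^N-R_2^N|$, and similarly for $\|v_1(\cdot,t)-v_2(\cdot,t)\|_{L^1}$ through the splitting $I_1+I_2+I_3$ using the monotonicity of $\tau_{\rho_i}$. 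The one genuinely new point, and the main obstacle, is to translate these $R^N$-estimates back into estimates for $R$. Here the hypothesis $R_0>0$ is essential: on the interval $[R_0,R_{\max}(T)]$ both $\rho\mapsto\rho^N$ and $P\mapsto P^{1/N}$ are Lipschitz, with constants depending only on $N$, $R_0$ and $R_{\max}(T)$, so $|R_1^N-R_2^N|\le C\|R_1-R_2\|$ and $|\rho_1-\rho_2|\le C|\rho_1^N-\rho_2^N|$. Combining these bi-Lipschitz bounds with the previous estimates yields $\|\mathcal{T}(u_1,R_1)-\mathcal{T}(u_2,R_2)\|\le C(T)\|(u_1,R_1)-(u_2,R_2)\|$ with $C(T)\to0$ as $T\to0$ and $C$ depending only on $N$, $\omega_N$, $R_0$, $\|J\|_{L^\infty}$ and $\|u_0\|_{L^1}$. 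The Banach fixed point theorem then gives a unique fixed point on a short interval; since $\|u(\cdot,t)\|_{L^1(\mathbb{R}^N)}$ is nonincreasing and the lower bound $R\ge R_0>0$ is preserved, the length of the interval can be taken uniform, and iterating produces the unique global solution $(u,R)\in C(\overline{\mathbb{R}_+};L^1(\mathbb{R}^N))\times C(\overline{\mathbb{R}_+})$.
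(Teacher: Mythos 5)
Your proposal is correct and takes essentially the same route as the paper, whose entire proof of this lemma is the remark that it is ``essentially the same'' as that of Lemma~\ref{lem:existence-line.integral.equation} with the obvious changes for radial symmetry and the weight $r^{N-1}$; your bi-Lipschitz conversion between $R$ and $R^N$ (valid because $R\ge R_0>0$) is precisely the content of those ``obvious changes.'' One small touch-up: the upper bound $R(t)\le R_{\max}(T):=\bigl(R_0^N+\tfrac{T}{\omega_N}\|u_0\|_{L^1(\R^N)}\bigr)^{1/N}$ should be written into the definition of $K_T$ itself (your a priori estimate shows $\mathcal{T}$ preserves it), since the Lipschitz bound $|R_1^N-R_2^N|\le C\|R_1-R_2\|_{L^\infty(0,T)}$ is applied to arbitrary input pairs $(u_i,R_i)\in K_T$, not only to their images under $\mathcal{T}$.
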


The proof is essentially the same one as that of Lemma~\ref{lem:existence-line.integral.equation}, with the obvious changes to take into the account the radial symmetry and the weight $r^{N-1}$.
It is then easy to see that if  $u_0$ is not trivial,  the solution $u$ of problem~\eqref{eq:radial.integral.version} that we have just constructed is positive in $\Omega$. Hence $R$ is strictly monotone, and therefore $(u,R)$ is a solution to Problem~(R). This solution is unique if we stay in the class of solutions that are continuous in $L^1(\mathbb{R}^N)$. 
\begin{teo}Problem {\rm (R)} has a unique solution such that $u\in C(\overline{\mathbb{R}_+};L^1(\R^N))$.
\end{teo}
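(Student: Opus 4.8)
The plan is to establish the two halves of the statement separately, reducing everything to the integral formulation~\eqref{eq:radial.integral.version} and to Lemma~\ref{lem:radial.integral.equation}, exactly as was done for Problem~{\rm (1D-1FB)} in Theorem~\ref{uniqueness-line}. For existence, when $u_0\not\equiv0$ I would invoke the pair $(u,R)$ produced by Lemma~\ref{lem:radial.integral.equation} and check, by the same positivity argument as in Proposition~\ref{thm:existence.classical.1D-1FB}, that $u>0$ in $\Omega$; together with the free boundary relation this forces $\frac{\rm d}{{\rm d}t}(R^N(t))>0$, so $R$ is strictly increasing and its inverse $\tau$ is continuous on $(R_0,R_\infty)$. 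As explained in the text preceding the theorem, this continuity of $\tau$ is precisely what turns a solution of~\eqref{eq:radial.integral.version} into a classical solution of Problem~{\rm (R)}. The degenerate case $u_0\equiv0$ is handled directly by the trivial pair $u\equiv0$, $R\equiv R_0$.

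For uniqueness within the class $u\in C(\overline{\mathbb{R}_+};L^1(\mathbb{R}^N))$, the key step is the converse implication: every such solution $(u,R)$ of Problem~{\rm (R)} must satisfy the integral system~\eqref{eq:radial.integral.version}. Integrating the free boundary ODE $\frac{\rm d}{{\rm d}t}(R^N)=N\int_{R(t)}^\infty r^{N-1}\mathcal{A}_Ju\,{\rm d}r$ from $0$ to $t$ yields the first line. For the second line I would fix $r<R_\infty$ and observe that, along that radius, $w(t):=u(r,t)$ solves the scalar linear ODE $\dot w=\mathcal{A}_Ju(r,t)-w$ for $t>\tau(r)$, with $w(\tau(r))=0$ if $r>R_0$ and $w(0)=u_0(r)$ if $r\le R_0$; the variation-of-constants formula with integrating factor $\textrm{e}^{t}$ then gives exactly the Duhamel expression. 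The third line is the definition of $u$ outside the habitat. Since Lemma~\ref{lem:radial.integral.equation} guarantees that~\eqref{eq:radial.integral.version} has a \emph{unique} solution in the radial, $L^1$-continuous class, any two solutions of Problem~{\rm (R)} in the stated class necessarily coincide.

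The routine verifications (the $L^1$-continuity of $u$, the well-posedness of $\tau$, and the validity of the Duhamel representation) run parallel to those of Section~\ref{sect:1D-1FB}, so I expect no genuinely new analytic difficulty here; the one point that deserves attention is the preservation of radial symmetry throughout. This is ensured by the fact that $J$ is radial, so that the convolution $\mathcal{A}_Ju=J\ast u$ of a radial function is again radial (as recalled above via the Bessel-function representation), and the fixed-point map underlying Lemma~\ref{lem:radial.integral.equation} therefore leaves the radial class invariant. Consequently the unique fixed point is radial, and the reduction to~\eqref{eq:radial.integral.version} stays entirely within the radial setting, which is what lets the one-dimensional machinery carry over with only the cosmetic replacement of Lebesgue measure by the weight $r^{N-1}$.
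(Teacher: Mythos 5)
Your proposal is correct and takes essentially the same route as the paper: existence via the fixed point of the radial integral system (Lemma~\ref{lem:radial.integral.equation}), upgraded through the positivity argument of Proposition~\ref{thm:existence.classical.1D-1FB} to strict monotonicity of $R$, continuity of $\tau$, and hence a classical solution, with the trivial pair handling $u_0\equiv 0$; and uniqueness by reducing any $L^1$-continuous solution of Problem~(R) back to~\eqref{eq:radial.integral.version} and invoking the uniqueness (contraction) part of that lemma. The extra details you supply --- the variation-of-constants reduction along each radius and the radial invariance of the fixed-point map --- are precisely the \lq\lq obvious changes'' the paper leaves implicit.
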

Comparison and regularity results analogous to Propositions~\ref{prop:comparison} and~\ref{prop:regularity} also hold.

\subsection{Asymptotic behaviour} As expected, the rate of growth of the volume of the habitable region coincides with the rate at which the total population decreases. As a consequence, the habitat stays confined in a bounded ball.

\begin{prop}
Let $(u,R)$ be a solution to Problem {\rm (R)}.  Then, 
$
\frac{\rm d}{{\rm d}t}|\Omega_t|=-\frac{\rm d}{{\rm d}t}\int_{\mathbb{R}^N} u(\cdot,t)$.
Hence  $R(t)\le \left(\frac{M(0)}{\omega_N}+R_0^N\right)^{1/N}$.
\end{prop}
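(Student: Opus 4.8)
The plan is to reproduce, in the radial $N$-dimensional setting, the balance identity $\dot s=-\dot M$ established for Problem~(1D-1FB) in Section~\ref{sect:1D-1FB}, with the length of the habitable interval replaced by the volume $|\Omega_t|=\omega_NR^N(t)$. The ingredients are the same: the equation $\partial_tu=\mathcal{A}_Ju-u$ in $\Omega$, the normalization $\int_{\mathbb{R}^N}J=1$, the vanishing of $u(\cdot,t)$ outside $\Omega_t$, and Fubini's theorem.

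First I would rewrite the free boundary condition as a flux integral over $\mathbb{R}^N\setminus\Omega_t$. Since the surface measure of the unit sphere is $|S^{N-1}|=N\omega_N$, any radial function $g$ satisfies $\int_{\mathbb{R}^N\setminus B_{R(t)}}g(x)\,{\rm d}x=N\omega_N\int_{R(t)}^\infty r^{N-1}g(r)\,{\rm d}r$. Applying this with $g=\mathcal{A}_Ju(\cdot,t)$ and using the Stefan condition in~\eqref{eq:problem.R} gives
\[
\frac{\rm d}{{\rm d}t}|\Omega_t|=\omega_N\frac{\rm d}{{\rm d}t}\big(R^N(t)\big)
=\omega_N N\int_{R(t)}^\infty r^{N-1}\mathcal{A}_Ju(r,t)\,{\rm d}r
=\int_{\mathbb{R}^N\setminus\Omega_t}\mathcal{A}_Ju(x,t)\,{\rm d}x.
\]

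Next I would compute $\dot M$. Because $u$ is continuous, radial, and vanishes for $|x|\ge R(t)$, differentiating $M(t)=N\omega_N\int_0^{R(t)}r^{N-1}u(r,t)\,{\rm d}r$ produces no moving-boundary contribution (the term carries the factor $u(R(t),t)=0$, exactly as in the proof of $\dot s=-\dot M$ in Section~\ref{sect:1D-1FB}), so $\dot M(t)=\int_{\Omega_t}\partial_tu(x,t)\,{\rm d}x=\int_{\Omega_t}\mathcal{A}_Ju(x,t)\,{\rm d}x-M(t)$. Writing $\mathcal{A}_Ju$ as a convolution, using $u(y,t)=0$ for $y\notin\Omega_t$, applying Fubini, and then $\int_{\Omega_t}J(x-y)\,{\rm d}x=1-\int_{\mathbb{R}^N\setminus\Omega_t}J(x-y)\,{\rm d}x$, I obtain
\[
\int_{\Omega_t}\mathcal{A}_Ju(x,t)\,{\rm d}x
=M(t)-\int_{\Omega_t}\int_{\mathbb{R}^N\setminus\Omega_t}J(x-y)u(y,t)\,{\rm d}x{\rm d}y,
\]
whence $\dot M(t)=-\int_{\Omega_t}\int_{\mathbb{R}^N\setminus\Omega_t}J(x-y)u(y,t)\,{\rm d}x{\rm d}y$. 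A further application of Fubini, again using $u=0$ outside $\Omega_t$, identifies this double integral with $\int_{\mathbb{R}^N\setminus\Omega_t}\mathcal{A}_Ju(x,t)\,{\rm d}x=\frac{\rm d}{{\rm d}t}|\Omega_t|$, yielding $\frac{\rm d}{{\rm d}t}|\Omega_t|=-\dot M(t)$.

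The consequence then follows by integrating in time: $|\Omega_t|=|\Omega_0|+M(0)-M(t)\le\omega_NR_0^N+M(0)$ since $M(t)\ge0$, and solving $\omega_NR^N(t)=|\Omega_t|$ for $R(t)$ gives the stated bound. I do not expect a genuine obstacle, since the content is a conservation law; the step most prone to error is the surface-measure normalization $|S^{N-1}|=N\omega_N$ that turns the radial Stefan condition into a flux over $\mathbb{R}^N\setminus\Omega_t$, together with the justification that differentiating $M$ produces no boundary term — both routine given the $C^1$ regularity of $R$ and the continuity of $u$ and $\partial_tu$ established earlier.
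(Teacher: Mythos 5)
Your proposal is correct and follows essentially the same route as the paper: compute $\dot M$ from the equation, use $\int_{\mathbb R^N}J=1$ and the vanishing of $u$ outside $\Omega_t$ to turn $\dot M$ into minus the nonlocal flux $\int_{\mathbb R^N\setminus\Omega_t}\mathcal A_Ju$, identify that flux with $\frac{\rm d}{{\rm d}t}(\omega_NR^N(t))$ via the radial Stefan condition and $|S^{N-1}|=N\omega_N$, and integrate in time. The only cosmetic difference is that you carry out the Fubini/double-integral decomposition explicitly (as the paper does in its one-dimensional proof), while the paper's radial proof compresses that step into the identity $\int_{\mathbb R^N}\mathcal A_Ju(\cdot,t)=M(t)$.
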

\begin{proof} 
 A straightforward computation shows that
	\[
	\begin{aligned}
	\dot M(t)&=\int_{\{|x|<R(t)\}} \partial_t u(\cdot,t)=\int_{\{|x|<R(t)\}}\mathcal{A}_Ju(\cdot,t)-\int_{\mathbb{R}^N} u(\cdot,t)=-\int_{\{|x|>R(t)\}}\mathcal{A}_J u(x,t)\,{\rm d}x\\
		&=-N\omega_N\int_{R(t)}^{\infty}r^{N-1}\mathcal{A}_J u(r,t)\,{\rm d}r=-\frac{\rm d}{{\rm d}t}(\omega_NR^N(t))=-\frac{\rm d}{{\rm d}t}|\Omega_t|.
	\end{aligned}
	\]
Hence $|\Omega_t|\le M(t)+	|\Omega_t|=M(0)+|\Omega_0|$, from where the bound for $R(t)$ follows immediately.
\end{proof}
As a corollary, we get the exponential decay to 0 of the solution, and the limit habitat.
\begin{prop} Let $(u,R)$ be a solution to Problem~{\rm (R)}.  There are constants $C,\lambda>0$ such that $u(x,t)\le C e^{-\lambda t}$. As a consequence, $R(t)\to R_\infty:=\left(\frac{M(0)}{\omega_N}+R_0^N\right)^{1/N}$,
\end{prop}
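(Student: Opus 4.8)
The plan is to repeat the strategy used for the compactly supported one-dimensional problem in Section~\ref{sect:1D-CS}: bound $u$ from above by the solution of the nonlocal heat equation in the \emph{fixed} limit ball $B_{R_\infty}$ with homogeneous exterior Dirichlet data, and then invoke the spectral decay of the latter. Since $R$ is nondecreasing and bounded by the previous proposition, the limit $R_\infty=\lim_{t\to\infty}R(t)\le\big(M(0)/\omega_N+R_0^N\big)^{1/N}$ exists, and $\Omega_t\subset B_{R_\infty}$ for every $t\ge0$.

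First I would introduce $v$ as the solution to
\begin{equation*}
\partial_t v-\L v=0\ \text{in }B_{R_\infty}\times\mathbb{R}_+,\quad
v=0\ \text{in }(\mathbb{R}^N\setminus B_{R_\infty})\times\mathbb{R}_+,\quad
v(\cdot,0)=u_0.
\end{equation*}
The key point is that $u$ is a subsolution of this problem: inside $\Omega_t$ it satisfies the equation exactly, while at a point $x\in B_{R_\infty}\setminus\Omega_t$ one has $u(x,t)=0$; since $\Omega$ is increasing, $u(x,\cdot)$ vanishes for earlier times as well, so $\partial_t u(x,t)=0\le\mathcal{A}_J u(x,t)=\L u(x,t)$ because $u\ge0$. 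Moreover $u$ and $v$ share the same initial datum and both vanish outside $B_{R_\infty}$. The comparison principle for the nonlocal Dirichlet problem then yields $0\le u(x,t)\le v(x,t)$. Exponential decay of $v$ follows from the existence of a positive first Dirichlet eigenvalue $\lambda>0$ of $\L$ in the bounded domain $B_{R_\infty}$; see~\cite{ChChR}. This gives $v(x,t)\le Ce^{-\lambda t}$, and therefore $u(x,t)\le Ce^{-\lambda t}$, which is the first assertion.

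For the consequence I would use the conservation identity established in the previous proposition, namely that $|\Omega_t|+M(t)=|\Omega_0|+M(0)=\omega_N R_0^N+M(0)$ is constant in time. The exponential bound gives $M(t)=\int_{B_{R_\infty}}u(\cdot,t)\le\omega_N R_\infty^N\,Ce^{-\lambda t}\to0$, so letting $t\to\infty$ in the identity produces $\omega_N R_\infty^N=\omega_N R_0^N+M(0)$, that is, $R_\infty=\big(M(0)/\omega_N+R_0^N\big)^{1/N}$.

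The main obstacle is the comparison step: one must verify rigorously that $u$ is a genuine subsolution of the fixed-domain problem despite its growing support, and that a comparison principle is available for the nonlocal Dirichlet problem on $B_{R_\infty}$. The spectral decay of $v$ is not problematic once the positivity of the first eigenvalue is granted by~\cite{ChChR}, since the associated semigroup contracts at rate $e^{-\lambda t}$.
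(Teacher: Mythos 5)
Your proposal is correct and follows essentially the same route as the paper: comparison of $u$ with the solution $v$ of the nonlocal Dirichlet problem in the fixed limit ball $B_{R_\infty}$, exponential decay of $v$ via the first Dirichlet eigenvalue of $\L$ (citing \cite{ChChR}), and then the conservation identity $|\Omega_t|+M(t)=|\Omega_0|+M(0)$ to identify $R_\infty$. Your write-up merely makes explicit two points the paper leaves implicit, namely the verification that $u$ is a subsolution on the fixed domain and the algebra yielding the formula for $R_\infty$.
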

\begin{proof}  Since $R(t)\le R_\infty$, we have that  $u\le v$, where $v$ is the solution to
	\[
	\partial_t v-\L v=0\ \text{in }B_{R_\infty}\times\mathbb{R}_+,\quad
	v=0\ \text{in }(\R^N\setminus B_{R_\infty})\times\mathbb{R}_+,\quad
	v(\cdot,0)=u_0 \ \text{in }\mathbb{R}^N.
	\]
	Therefore, since $u_0\in L^\infty(\R^N)$,  $u(x,t)\le v(x,t)\le C e^{-\lambda t}$ where $\lambda>0$ is the first eigenvalue of the operator $-\L$ with Dirichlet conditions, in $B_{R_\infty}$. Finally, this bound gives that $M(t)\to0$ as $t\to\infty$ implying that $R(t)\to R_\infty$.
\end{proof}
%
%
%
%
%
%
%
%


\end{document}